\newtheorem{thm}{Theorem}[section]
\newtheorem*{thmA}{Theorem A}
\newtheorem*{thmA'}{Theorem A'}
\newtheorem*{thmB}{Theorem B}
\newtheorem*{thmC}{Theorem C}
\newtheorem{cor}[thm]{Corollary}
\newtheorem{lem}[thm]{Lemma}
\newtheorem{assump}[thm]{Assumption}
\newtheorem{prop}[thm]{Proposition}
\theoremstyle{definition}
\theoremstyle{remark}
\newtheorem{rem}{Remark}
\numberwithin{equation}{section}
\renewcommand{\widetilde}{\tilde}
\renewcommand{\-}{\overline}
\renewcommand{\b}{\beta}
\renewcommand{\d}{\delta}
\renewcommand{\k}{\kappa}
\renewcommand{\t}{\theta}
\newcommand{\s}{\sigma}
\newcommand{\ra}{\rightarrow}
\begin{document}
	\title[Inverse curvature flows in unit ball]{Inverse curvature flows for capillary hypersurfaces in the unit ball}	
\author[S. Pan]{Shujing Pan}
\address{School of Mathematical Sciences, University of Science and Technology of China, Hefei 230026, P.R. China}
\email{\href{mailto:psj@ustc.edu.cn}{psj@ustc.edu.cn}}
\author[B. Yang]{Bo Yang}
\address{Department of Mathematical Sciences, Tsinghua University, Beijing 100084, P.R. China}
\email{\href{mailto:ybo@tsinghua.edu.cn}{ybo@tsinghua.edu.cn}}
	
\subjclass[2010]{53C44, 53C21, 35K93, 52A40}
\keywords{Inverse curvature flows, Capillary hypersurfaces, Unit Euclidean ball, Alexandrov-Fenchel inequalities}

	\begin{abstract}
In this paper, we study inverse curvature flows for strictly convex, capillary hypersurfaces in the unit Euclidean ball. We establish the existence and convergence results for a class of such flows. As an application, we derive a family of Alexandrov–Fenchel inequalities for weakly convex hypersurfaces with free boundary.
	\end{abstract}	
	\maketitle
		\tableofcontents
\section{Introduction}

In this paper, we study inverse curvature flows and their applications to the Alexandrov-Fenchel inequalities for capillary hypersurfaces in the unit Euclidean ball. Let $\Sigma\subset\bar{\mathbb{B}}^{n+1} (n\geq 2)$ be a properly embedded smooth hypersurface in the unit Euclidean ball $\bar{\mathbb{B}}^{n+1}$, given by an embedding $x:\bar{\mathbb{B}}^n\to\bar{\mathbb{B}}^{n+1}$ satisfying
\begin{align*}
	\mathrm{int}(\Sigma)=x(\mathbb{B}^n)\subset \mathbb{B}^{n+1},\quad  \partial\Sigma=x(\partial\mathbb{B}^n)\subset \mathbb{S}^n.
\end{align*}
Let $\bar{N}$ be the unit outward normal of $\mathbb S^n=\partial \mathbb B^{n+1}$, and $\nu$ be a smooth choice of the unit normal of $\Sigma$. For $\theta\in(0,\pi)$, we say that $\Sigma$ has  $\theta$-capillary boundary $\partial\Sigma$ on $\mathbb{S}^n$ if $\Sigma$ intersects $\mathbb S^n$ at the constant contact angle $\t$, that is,
\begin{equation*}
	\langle\bar{N}\circ x,\nu\rangle =-\cos\theta, \quad \text{along $\partial \mathbb B^n$}.
\end{equation*}
In particular, if $\theta=\frac{\pi}{2}$, i.e., $\Sigma$ intersects $\mathbb S^n$ orthogonally, we call that $\Sigma$ has free boundary. Denote by $\widehat{\partial\Sigma}$ the convex body in $\mathbb{S}^n$ enclosed by $\partial \Sigma$, and $\widehat{\Sigma}$ the convex domain in $\bar{\mathbb{B}}^{n+1}$ enclosed by $\widehat{\partial\Sigma}$ and $\Sigma$. We choose the unit normal $\nu$ of $\Sigma$ as the one pointing outward of $\widehat{\Sigma}$.

Two model examples are the spherical cap of radius $r$ around a constant unit vector $e\in \mathbb{S}^n\subset\mathbb R^{n+1}$ with $\t$-capillary boundary, given by
\begin{align}\label{s1:spherical-cap}
	C_{\theta,r}(e):=\left\{x\in \bar{\mathbb B}^{n+1}:\left| x-\sqrt{r^2+2r\cos\t+1}e\right|=r\right\},
\end{align}
and the {flat ball} around $e$ with $\t$-capillary boundary, given by
\begin{align}\label{s1:flat-disk}
	C_{\theta,\infty}(e):=\{ x\in \bar{\mathbb B}^{n+1}:\langle x,e\rangle=\cos\t\}.
\end{align}

Let $\kappa=(\kappa_1,\cdots,\kappa_n)$ be the principal curvatures of $\Sigma$, we define the $k$th normalized mean curvature of $\Sigma$ as 
\begin{equation*}
	E_k(\kappa)=\binom{n}{k}^{-1}\sum_{1\leq i_1<i_2<\cdots<i_k\leq n}{\kappa_{i_1}\kappa_{i_2}\cdots\kappa_{i_n}}.
\end{equation*}
In particular, we write $H=nE_1$ for the mean curvature of $\Sigma$, and $K=E_n$ as the Gauss curvature of $\Sigma$. We say a hypersurface $\Sigma$ is {\em strictly convex}, if all principal curvatures $\k_i>0$ everywhere on $\Sigma$, and is {\em weakly convex} if all principal curvatures $\kappa_i\geq 0$ everywhere on $\Sigma$.

This paper deals with the expanding curvature flows of the form
\begin{equation}\label{Inverse-flow}
	\left\{\begin{aligned}
		\left(\partial_t x\right)^{\bot}&=\frac{1}{F}\nu &\text{in}\quad \bar{\mathbb{B}}^n \times[0,T),\\
		\langle\bar{N}\circ x,\nu\rangle&=-\cos\theta  &\text{on}\quad \partial\bar{\mathbb{B}}^n \times[0,T),\\
		x(\cdot,0)&=x_0(\cdot)  &\text{on} \quad \bar{\mathbb{B}}^n,
	\end{aligned}\right.
\end{equation}
where  $F=F(\mathcal{W})$  is a smooth function evaluated at the Weingarten operator $\mathcal{W}=\{h_i^j\}$ of of the flow
hypersurfaces  $\Sigma_t=x(\bar{\mathbb{B}}^n,t)$ and $\nu$ is the outward pointing
normal. 

Without ambiguity, we also  regard $F$ as a function of the eigenvalues $\kappa(\mathcal{W})$ of the Weingarten operator $\mathcal{W}$. In this context, we denote 
$$F_i(\kappa)=\frac{\partial F}{\partial\kappa_i}(\kappa).$$
Moreover, the following relations hold:
\begin{equation*}
	F_j^i=\frac{\partial F}{\partial h_i^j}=g_{kj}F^{ki}=g_{kj}\frac{\partial F}{\partial h_{ki}}.
\end{equation*}
There are several conditions that are consistently imposed on the curvature function $F$ throughout this paper:
\begin{assump}\label{assum}
	Let $\Gamma\subset\mathbb{R}^n$ be a symmetric, open and convex cone containing the positive cone 
	\begin{equation}
		\Gamma^{+}=\{\kappa\in\mathbb{R}^n|\kappa_i>0,\forall 1\leq i\leq n\}. 
	\end{equation}
	Suppose $F\in C^0(\bar{\Gamma})\cap C^2(\Gamma)$  satisfies 
	\begin{itemize}
		\item[(i)] $F$ is strictly positive in $\Gamma$ and vanishes on the boundary of $\Gamma$ (i.e. $F|_{\Gamma}>0$ and $F|_{\partial\Gamma}=0$), which is normalized such that $F(1,\cdots,1)=1$;
		\item[(ii)] $F$ is homogeneous of degree one: $F(\lambda\kappa)=\lambda F(\kappa)$, for any $\lambda>0$;
		\item[(iii)] $F$ is  strictly increasing in each argument: $F_i(\kappa)=\frac{\partial F}{\partial\kappa_i}(\kappa)>0$, $\forall i=1,\cdots,n$;
		\item[(iv)] $F$ is concave and inverse concave on $\Gamma^+$. That is, both $F$ and the function $F_{*}$ given by
		\begin{align*}
			F_{*}(\kappa_1,\cdots,\kappa_n)=F(\kappa_1^{-1},\cdots,\kappa_n^{-1})^{-1}
		\end{align*}
		are concave on $\Gamma^+$.
	\end{itemize}
\end{assump}
\begin{rem}
	The most important examples of  curvature functions $F$ satisfying the above assumption are $\left(\frac{E_k}{E_{\ell}}\right)^{\frac{1}{k-\ell}}$, $0\leq \ell<k\leq n$.
\end{rem}

We prove the following  existence and convergence result:
\begin{thm}
Suppose that $F$ is a curvature function satisfying Assumption \ref{assum}. Let $\Sigma\subset\bar{\mathbb{B}}^{n+1} (n\geq 2)$ be a properly embedded, strictly convex smooth hypersurface with capillary boundary supported on $\mathbb{S}^n$ at a contact angle $\theta\in(0,\frac{\pi}{2}]$, given by an embedding $x:\bar{\mathbb{B}}^n\to\bar{\mathbb{B}}^{n+1}$. Then there exists a finite time $T^{*}<\infty,\alpha>0$ such that the flow \eqref{Inverse-flow} starting from $\Sigma$ admits a unique solution $x(\cdot,t)$, such that
	\begin{equation*}
		x(\cdot,t)\in C^{\infty}(\bar{\mathbb{B}}^n\times(0,T^{*}))\cap C^{2+\alpha,1+\frac{\alpha}{2}}(\bar{\mathbb{B}}^n\times[0,T^{*})).
	\end{equation*}
	Moreover, the solution $x(\cdot,t)$ remains strictly convex in the time interval $(0,T^{*})$ and converges to a flat ball with $\theta$-capillary boundary as $t\to T^{*}$.
\end{thm}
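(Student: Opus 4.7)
The plan is to follow the standard program for parabolic curvature flows, adapted to the capillary setting. First, I would parametrize $\Sigma_t$ as a graph by a scalar $u$ on $\bar{\mathbb{B}}^n$ (via its support function, or radially over a reference spherical cap). Equation~\eqref{Inverse-flow} then becomes a fully nonlinear parabolic equation $\partial_t u = \mathcal{G}(D^2 u, Du, u)$ on $\mathbb{B}^n$, supplemented by a uniformly oblique boundary condition on $\partial\mathbb{B}^n$ encoding $\langle \bar{N}, \nu\rangle = -\cos\theta$. Parabolicity follows from Assumption~\ref{assum}(iii), and Lieberman's theory of oblique boundary value problems for fully nonlinear parabolic equations then yields short-time existence and uniqueness in $C^{2+\alpha, 1+\alpha/2}$, with interior $C^\infty$ smoothness.

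Next I would extend the solution to a maximal time $T^* \le \infty$ through a priori estimates. Comparison with spherical cap sub/super-solutions $C_{\theta, r_{\pm}}(e)$ traps $\Sigma_t$ inside $\bar{\mathbb{B}}^{n+1}$, yielding $C^0$ control. The gradient ($C^1$) bound is produced by the maximum principle on an auxiliary scalar built from $u$ and $\langle x, \bar{N}\rangle$, with the boundary contributions handled via the capillary identity. For the $C^2$ bound I would apply the tensor maximum principle to the evolution of the Weingarten operator $h_i^j$: the concavity and inverse concavity of $F$ (Assumption~\ref{assum}(iv)) tame the quadratic gradient terms, and the boundary derivatives of $h_i^j$ on $\partial\mathbb{B}^n \times [0, T^*)$ are determined by repeated differentiation of the capillary identity. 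Preservation of strict convexity throughout $(0, T^*)$ would then come from a separate tensor maximum principle—applied, for instance, to $\log \kappa_{\min}$ or to a suitable symmetric polynomial—showing $\kappa_{\min}>0$ for every $t < T^*$.

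For the convergence, I would identify a quermassintegral-type functional tailored to $F$ that is monotone along the flow. Combined with the a priori estimates, this monotonicity together with the rigidity case of a capillary Alexandrov–Fenchel-type inequality forces $\Sigma_t$ to approach a limit on which $F \equiv 0$, namely a flat ball $C_{\theta, \infty}(e)$. Finiteness of the maximal time, $T^* < \infty$, is then obtained by integrating the evolution of the enclosed volume (or a weighted analogue), which cannot grow indefinitely inside $\bar{\mathbb{B}}^{n+1}$; this forces the flow to become singular at the flat-ball configuration in finite time, with $1/F$ blowing up as $\kappa_{\min}\to 0$.

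The principal difficulty is simultaneously bounding the Weingarten operator and preserving strict convexity while correctly treating the boundary terms. The interior tensor maximum principle under Assumption~\ref{assum} is by now standard, but the boundary contributions generated when $\partial\Sigma_t$ slides on $\mathbb{S}^n$ at angle $\theta$ produce terms whose favorable sign hinges on the restriction $\theta \in (0, \pi/2]$; this step will require a careful algebraic analysis of $\nabla_\nu h_i^j$ on $\partial\mathbb{B}^n$ via the capillary identity, together with a well-chosen auxiliary function. A secondary subtlety is pinpointing the precise monotone functional and verifying rigidity in the associated capillary Alexandrov–Fenchel inequality—both substantial in their own right.
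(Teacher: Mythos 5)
There is a genuine gap. Your outline never addresses the \emph{lower} bound on the speed function $F$ (equivalently, an upper bound on the speed $1/F$ and the uniform ellipticity of the linearized operator), and without it none of the later steps close. Since $F$ vanishes on $\partial\Gamma$ by Assumption \ref{assum}(i), the principal curvatures could drift to $\partial\Gamma^{+}$ before any flat-ball configuration is reached, degenerating the equation; the $C^{2}$ bound, the preservation of strict convexity and the higher-order (Ural'tseva/Lieberman) estimates all require that the $\kappa_i$ stay in a compact subset of $\Gamma$. In the paper this is the heart of the proof and it has a conditional structure you are missing: one first shows the boundaries $\partial\Sigma_t\subset\mathbb{S}^n$ converge in $C^{1,\alpha}$ and satisfy a dichotomy (Lemma \ref{lem-rigidity}: the limit is either $\partial C_{\theta,\infty}(e)$ or strictly inside it); in the second case the height estimate $\langle x,e\rangle\geq\cos\theta+\delta$ (Lemma \ref{Lem-es-height}) feeds into a carefully built test function $\xi=G(\chi)/\bigl(F(1+\cos\theta\langle x,\nu\rangle)\bigr)$ giving $F\geq c>0$ (Lemma \ref{lowbound-F}), hence uniform ellipticity (Lemma \ref{uniform elliptic}); only then can convexity be preserved, via the \emph{scalar} quantity $\tilde H=\sum_i\kappa_i^{-1}$ weighted by $\langle x,e\rangle^{-\beta}$ and inverse concavity (Lemma \ref{Lem-pconvex}). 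Your alternative of a tensor maximum principle for $h_i^j$ or $\log\kappa_{\min}$ with boundary terms obtained by ``repeated differentiation of the capillary identity'' is precisely what the paper avoids: at a capillary boundary the normal derivative of the full Weingarten tensor has no evident sign, whereas the scalar quantities $F$, $H$, $\tilde H$ admit Hopf-type sign arguments through \eqref{de-boundary} and Proposition \ref{boundary-h-property}, and only for $\theta\in(0,\frac{\pi}{2}]$.

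The convergence step is also not a proof as stated. Monotonicity of a quermassintegral-type functional together with ``the rigidity case of a capillary Alexandrov--Fenchel inequality'' does not force subconvergence to a flat ball (monotone quantities can converge without the hypersurfaces converging to the extremal configuration), and in the context of this paper it would be circular, since the Alexandrov--Fenchel inequalities are derived \emph{from} the flow. The paper's mechanism is different: it proves that if the limiting boundary at the putative maximal time is not $\partial C_{\theta,\infty}(e)$, then, after the M\"obius transformation \eqref{conform-transform} reducing \eqref{Inverse-flow} to the scalar equation \eqref{Inverse-scalar-flow} on $\bar{\mathbb{S}}^n_{+}$ with a uniformly oblique boundary condition, the uniform $C^2$ bounds and Ural'tseva/Lieberman theory extend the solution past that time; hence the flow can only terminate at a flat ball. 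Your finite-time argument (integrating the volume evolution against the upper bound on $1/F$ from the maximum principle) is fine in spirit and parallels the paper's use of the boundary speed $1/(F\sin\theta)$ being bounded below, but the missing ellipticity/lower speed bound and the missing extension argument are essential gaps, not technical details.
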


The study of inverse curvature flows for closed hypersurfaces in Euclidean space was pioneered by Gerhardt \cite{Ger90} and Urbas \cite{Urbas90}, who established the long-time existence and convergence of the flow for star-shaped initial  hypersurfaces with positive curvature function  $F$. This result, particularly with $F=\frac{E_{k+1}}{E_{k}}$ was later exploited by Guan-Li \cite{guanli09} to generalise the Alexandrov–Fenchel quermassintegral inequalities from the convex setting to the star-shaped and $(k+1)$-convex setting. Since then, a series of similar results have been developed using the same method in various ambient spaces, and the study of inverse curvature flows has become increasingly active.  For examples, inverse curvature flows in hyperbolic space and the sphere have been investigated in \cite{Ger11,Ger15,Liu17,Yu}, while analogous results in more general warped product spaces can be found in  \cite{BHW16,LW17,Scheuer17,Scheuer19}, among many others. A more comprehensive introduction to this topic is available in \cite{Scheuer19}.

For the free boundary case, Lambert and Scheuer \cite{Lambert-Scheuer2016} has shown the convergence result of the flow \eqref{Inverse-flow} with $F=\frac{1}{H}$  (i.e. the inverse mean curvature flow) for strictly convex hypersurfaces in $\bar{\mathbb{B}}^{n+1}$. As an application, they obtained the following Li-Yau type inequality for convex hypersurfaces with free boundary in any dimension $n\geq 3$.

\begin{thmA}[\cite{Lambert-Scheuer2017}]
	Let $\Sigma\subset\bar{\mathbb{B}}^{n+1} (n\geq 3)$ be a properly embedded, strictly convex smooth hypersurface with free boundary supported on $\mathbb{S}^n$ . Then there holds
	\begin{equation}
		\frac{1}{2}|\Sigma|^{\frac{2-n}{n}}\int_{\Sigma}{H^2}\,d\mu+b_n^{\frac{2-n}{n}}|\partial\Sigma|\geq b_n^{\frac{2-n}{n}}|\mathbb{S}^{n-1}|,
	\end{equation}
where $b_n$ denotes the volume of $n$-dimensional unit ball, and equality holds if and only if $\Sigma$ is a flat disk with free boundary.
\end{thmA}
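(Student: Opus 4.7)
The plan is to apply the inverse mean curvature flow with free boundary (the case $F = H$, $\theta = \pi/2$ of the flow \eqref{Inverse-flow}) and read off the inequality from the limit of a monotone energy. By the main theorem above, starting from $\Sigma$ this flow admits a smooth, strictly convex solution $\Sigma_t$ on a maximal finite interval $[0, T^*)$ and converges to a flat disk with free boundary as $t \to T^*$. The goal is to prove that the functional
$$Q(t) := \frac{1}{2}|\Sigma_t|^{\frac{2-n}{n}}\int_{\Sigma_t} H^2 \, d\mu + b_n^{\frac{2-n}{n}}|\partial \Sigma_t|$$
is non-increasing along the flow, so that $Q(0) \geq \lim_{t \to T^*} Q(t) = b_n^{\frac{2-n}{n}}|\mathbb{S}^{n-1}|$, the latter limit being immediate since $H \to 0$, $|\Sigma_t| \to b_n$, and $|\partial\Sigma_t| \to |\mathbb{S}^{n-1}|$ on the flat disk.

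The interior computation is the Geroch--Huisken calculation adapted to IMCF. Using $\partial_t d\mu = d\mu$, $\partial_t H = -\Delta(1/H) - |A|^2/H$, integration by parts, and Cauchy--Schwarz $|A|^2 \geq H^2/n$, one finds after the exponents $\frac{2-n}{n}$ and $\frac{n-2}{n}$ cancel that
$$\frac{d}{dt}\left(|\Sigma_t|^{\frac{2-n}{n}}\int_{\Sigma_t}H^2\,d\mu\right) \leq -2|\Sigma_t|^{\frac{2-n}{n}}\int_{\Sigma_t}\frac{|\nabla H|^2}{H^2}\,d\mu + 2|\Sigma_t|^{\frac{2-n}{n}}\int_{\partial\Sigma_t}\frac{\partial_\mu H}{H}\,d\sigma,$$
where $\mu$ is the outer conormal to $\Sigma_t$ along $\partial\Sigma_t$. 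The first term on the right is already non-positive; it remains to control the boundary integral together with $b_n^{\frac{2-n}{n}}\frac{d}{dt}|\partial\Sigma_t|$.

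This boundary analysis is where the geometry of the unit ball enters essentially and is the main obstacle. I would use the free-boundary identity $\langle \bar N, \nu\rangle = 0$ on $\partial\Sigma_t$ together with umbilicity of $\mathbb{S}^n$ to establish Stahl's identity $h(\mu, X) = 0$ for $X \in T\partial\Sigma_t$; consequently $\mu$ is a principal direction of $\Sigma_t$ and the second fundamental form of $\partial\Sigma_t$ inside $\mathbb{S}^n$ agrees with the restriction of $h$ to $T\partial\Sigma_t$. Combined with the Codazzi equation this expresses $\partial_\mu H$ along $\partial\Sigma_t$ in terms of tangential curvature data; simultaneously the boundary-velocity formula $\frac{d}{dt}|\partial\Sigma_t| = \int_{\partial\Sigma_t}(H-\kappa_\mu)/H\,d\sigma$ converts the change in $|\partial\Sigma_t|$ into comparable curvature data. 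The crux is then to derive a pointwise/integral inequality on $\partial\Sigma_t$, viewed as a convex hypersurface of $\mathbb{S}^n$, that makes the sum non-positive; matching the disparate weights $|\Sigma_t|^{\frac{2-n}{n}}$ and $b_n^{\frac{2-n}{n}}$ seems to require an isoperimetric comparison for free-boundary surfaces in $\bar{\mathbb{B}}^{n+1}$ combined with convexity of $\Sigma_t$.

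Granted monotonicity, the inequality $Q(0) \geq b_n^{\frac{2-n}{n}}|\mathbb{S}^{n-1}|$ is immediate. For rigidity, equality throughout forces the Cauchy--Schwarz equality $|A|^2 = H^2/n$ and $|\nabla H| \equiv 0$ on every $\Sigma_t$, so each $\Sigma_t$ is umbilical and, being strictly convex, must be a spherical cap. A direct calculation of $Q$ on a spherical cap of finite radius with free boundary shows $Q > b_n^{\frac{2-n}{n}}|\mathbb{S}^{n-1}|$ strictly, so $\Sigma$ must itself be a flat disk.
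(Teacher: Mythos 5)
This theorem is not proved in the paper you were given: it is quoted verbatim from \cite{Lambert-Scheuer2017}, so the only fair comparison is with that cited proof, and your strategy (run the free boundary inverse mean curvature flow and show the quantity $Q(t)=\tfrac12|\Sigma_t|^{\frac{2-n}{n}}\int_{\Sigma_t}H^2\,d\mu+b_n^{\frac{2-n}{n}}|\partial\Sigma_t|$ is non-increasing, then pass to the flat-disk limit) is exactly the Lambert--Scheuer strategy; your interior Geroch-type computation, including the cancellation of the exponents via $|A|^2\ge H^2/n$ and $|\Sigma_t|=|\Sigma|e^t$, is correct (you silently drop the reparametrization terms coming from $\mathcal{T}$, but these combine into $\int_{\Sigma_t}\mathrm{div}(H^2\mathcal{T})\,d\mu=\int_{\partial\Sigma_t}H^2\langle\mathcal{T},\mu\rangle\,d\sigma=0$ since $\mathcal{T}|_{\partial\Sigma_t}=f\cot\theta\,\mu=0$ at $\theta=\tfrac\pi2$, so that is harmless). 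The genuine gap is exactly where you write ``the crux is then to derive\dots'': the boundary estimate is never established, and the route you indicate cannot succeed as described. Stahl's identity and Codazzi (Proposition \ref{boundary-h-property}) only determine $\nabla_\mu h_{\alpha\beta}$ for tangential indices; $\nabla_\mu h_{\mu\mu}$, hence $\partial_\mu H$, is \emph{not} determined by the static geometry of $\partial\Sigma_t$. The missing ingredient is the parabolic compatibility (Neumann) condition obtained by differentiating the contact-angle condition in time, i.e.\ \eqref{de-boundary} with $f=1/F$ at $\theta=\tfrac{\pi}{2}$, which for inverse mean curvature flow gives $\nabla_\mu(1/H)=1/H$, equivalently $\partial_\mu H=-H$ on $\partial\Sigma_t$. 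With this the boundary integral is not merely estimated but computed: $\int_{\partial\Sigma_t}\frac{\partial_\mu H}{H}\,d\sigma=-|\partial\Sigma_t|$, whence $\frac{d}{dt}\bigl(\tfrac12|\Sigma_t|^{\frac{2-n}{n}}\int_{\Sigma_t}H^2\bigr)\le-|\Sigma_t|^{\frac{2-n}{n}}|\partial\Sigma_t|$.

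The remaining comparison is then elementary and does not require any new isoperimetric input. At $\theta=\tfrac\pi2$ the boundary moves in $\mathbb{S}^n$ with speed $1/H$ in direction $\bar\nu$, and by Proposition \ref{boundary-h-property}(2) the mean curvature of $\partial\Sigma_t\subset\mathbb{S}^n$ is $\widehat H=H-h_{\mu\mu}\le H$ by convexity, so your formula gives $\frac{d}{dt}|\partial\Sigma_t|=\int_{\partial\Sigma_t}\frac{H-h_{\mu\mu}}{H}\,d\sigma\le|\partial\Sigma_t|$. The ``disparate weights'' are matched by the single fact $|\Sigma_t|\le b_n$ (immediate from $|\Sigma_t|=|\Sigma|e^{t}\nearrow b_n$ along the flow, or from Lemma \ref{lem-area-L}), combined with $n\ge3$, which makes the exponent $\tfrac{2-n}{n}$ negative and hence $|\Sigma_t|^{\frac{2-n}{n}}\ge b_n^{\frac{2-n}{n}}$; this is also precisely where the dimension hypothesis enters, which your sketch never uses. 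Altogether $\frac{d}{dt}Q\le\bigl(b_n^{\frac{2-n}{n}}-|\Sigma_t|^{\frac{2-n}{n}}\bigr)|\partial\Sigma_t|\le0$. Two smaller points: the limit $\int_{\Sigma_t}H^2\to0$ should be justified (it is \eqref{eq-InteE_1}, i.e.\ Lemma 2.2 of \cite{Lambert-Scheuer2017}, not just pointwise $H\to0$); and for rigidity you do not need the unproven direct computation of $Q$ on spherical caps: equality in the monotonicity forces $|\Sigma_t|=b_n$ and $h_{\mu\mu}=0$ on $\partial\Sigma_t$ for $t>0$, both impossible for a strictly convex solution, so the inequality is strict for strictly convex $\Sigma$ and equality is attained only by the flat disk.
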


Scheuer, Wang and Xia \cite{Scheuer-Wang-Xia2018} introduced the quermassintegrals $W_k(\widehat{\Sigma})$ for smooth hypersurfaces $\Sigma$ with free boundary in $\mathbb{B}^{n+1}$. Later,  Weng and Xia \cite{Weng-Xia2022} generalized this definitions to $W_{k,\theta}(\widehat{\Sigma})$ for capillary hypersurfaces with contact angle $\theta\in(0,\pi)$ which exhibit favorable variational properties (see \S\ref{sub-sec2.2} for details). Using a locally constrained inverse curvature flow, they established  the following Alexandrov-Fenchel inequalities for weakly convex hypersurfaces in $\bar{\mathbb{B}}^{n+1}$.

\begin{thmB}[\cites{Scheuer-Wang-Xia2018, Weng-Xia2022}]
	Let $\Sigma\subset\bar{\mathbb{B}}^{n+1} (n\geq 2)$ be a properly embedded, weakly convex smooth hypersurface in the unit ball with $\theta$-capillary boundary, where $\t\in (0,\frac{\pi}{2}]$. Then for any $k=0,\cdots, n-1$, there holds
	\begin{equation}\label{eq-AF2}
		W_{n,\theta}(\widehat{\Sigma})\geq (f_n\circ f_k^{-1})(W_{k,\theta}(\widehat{\Sigma})),
	\end{equation}
	where $f_k(r)$ is the strictly increasing function given by $f_k(r)=W_{k,\theta}(\widehat{C_{\theta,r}})$. Equality holds in \eqref{eq-AF2} if and only if $\Sigma$ is a spherical cap or a flat ball with $\theta$-capillary boundary.
\end{thmB}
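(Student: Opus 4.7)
The plan is to apply Theorem~1.1 with a carefully chosen curvature function $F$ from the family $(E_{k}/E_{\ell})^{1/(k-\ell)}$ in order to generate a monotone Alexandrov--Fenchel-type quantity along the flow \eqref{Inverse-flow}, then pass to the smooth limit. I first establish the inequality for strictly convex $\Sigma$ (the setting of Theorem~1.1) and afterwards remove the strict convexity assumption by approximation.

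For the strictly convex case, fix $F$ from the above family so that Assumption~\ref{assum} holds; Theorem~1.1 then yields a smooth, strictly convex, $\theta$-capillary flow $\Sigma_{t}$ on a maximal interval $[0, T^{*})$ that converges in $C^{\infty}$ to a flat ball $C_{\theta,\infty}(e)$ as $t \to T^{*}$. Using the first variation formulas for the capillary quermassintegrals developed by Scheuer--Wang--Xia and Weng--Xia (cf.\ \S\ref{sub-sec2.2}), compute
\[
\frac{d}{dt} W_{j,\theta}(\widehat{\Sigma}_{t}) = c_{j} \int_{\Sigma_{t}} \frac{E_{j}(\kappa)}{F}\, d\mu_{t} + (\text{boundary contribution involving } \cos\theta),
\]
for $j = k$ and $j = n$, and set
\[
Q(t) := W_{n,\theta}(\widehat{\Sigma}_{t}) - (f_{n} \circ f_{k}^{-1})\bigl(W_{k,\theta}(\widehat{\Sigma}_{t})\bigr).
\]
Combining the two variation formulas with the Newton--Maclaurin inequalities on $\Gamma^{+}$ and H\"older's inequality, deduce $Q'(t) \leq 0$, with equality only at umbilical configurations. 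At $t = T^{*}$, $\Sigma_{t} \to C_{\theta,\infty}(e)$ gives $W_{k,\theta} \to f_{k}(\infty)$ and $W_{n,\theta} \to f_{n}(\infty)$, hence $Q(T^{*}) = 0$ by the very definitions of $f_{k}$ and $f_{n}$. Monotonicity then yields $Q(0) \geq 0$, which is precisely \eqref{eq-AF2}.

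Equality at $t = 0$ forces $Q \equiv 0$ on $[0, T^{*})$, hence pointwise equality in the Newton--Maclaurin step, which forces $\kappa_{1} = \cdots = \kappa_{n}$ on $\Sigma$; together with the $\theta$-capillary boundary condition this characterizes $\Sigma$ as a spherical cap $C_{\theta,r}(e)$ or a flat ball $C_{\theta,\infty}(e)$. For the weakly convex case, I approximate $\Sigma$ by a sequence of strictly convex $\theta$-capillary hypersurfaces $\Sigma^{(j)} \to \Sigma$ in $C^{2}$, apply the strictly convex result to each $\Sigma^{(j)}$, and pass to the limit using continuity of $W_{k,\theta}$ and $W_{n,\theta}$ in the $C^{2}$ topology.

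The main obstacle is establishing $Q'(t) \leq 0$: balancing the flow-driven bulk integrals against the capillary boundary contributions so that the net derivative has a definite sign once Newton--Maclaurin is invoked. The $\cos\theta$ boundary terms — absent in the closed Euclidean Guan--Li argument — must be handled via the capillary Minkowski-type identities, and constitute the principal new analytical input over the classical closed-hypersurface case. A secondary technical point is the construction of the $C^{2}$ strictly convex approximants with the prescribed capillary angle, which requires a short-time smoothing compatible with the boundary condition on $\mathbb{S}^{n}$.
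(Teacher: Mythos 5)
There is a genuine gap, and it sits exactly where you locate the ``main obstacle'': the claimed monotonicity $Q'(t)\leq 0$ is not established, and the mechanism you propose (Newton--Maclaurin plus H\"older applied to the two variation formulas) cannot deliver it in the capillary setting. Note first that the paper does not prove Theorem B at all --- it is quoted from \cite{Scheuer-Wang-Xia2018} and \cite{Weng-Xia2022}, whose proofs use \emph{locally constrained} inverse curvature flows, precisely because the unconstrained flow \eqref{Inverse-flow} does not have the variational structure your argument needs. Along \eqref{Inverse-flow} the formula \eqref{s2.dW} gives $\frac{d}{dt}W_{j,\theta}(\widehat{\Sigma}_t)=\frac{n+1-j}{n+1}\int_{\Sigma_t}E_j F^{-1}d\mu_t$ with \emph{no} separate boundary contribution (the $\cos\theta$ terms are already absorbed into the definition \eqref{s2:quermassintegral-2}), so your plan of ``balancing bulk integrals against capillary boundary contributions'' misreads where the difficulty lies. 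The actual obstruction is that after applying Newton--Maclaurin to $\int E_j F^{-1}$ you obtain curvature integrals $\int_{\Sigma_t}E_{j'}\,d\mu_t$, and in the ball these are \emph{not} multiples of quermassintegrals: by \eqref{s2:quermassintegral-2} they differ from $W_{j'+1,\theta}(\widehat{\Sigma}_t)$ by spherical boundary terms $W_\ell^{\mathbb{S}^n}(\widehat{\partial\Sigma_t})$ which carry no sign in general for $\theta\neq\frac{\pi}{2}$ and are not controlled by $W_{k,\theta}(\widehat{\Sigma}_t)$. Consequently the comparison between $\frac{d}{dt}W_{n,\theta}$ and $(f_n\circ f_k^{-1})'\cdot\frac{d}{dt}W_{k,\theta}$ cannot be closed by pointwise symmetric-function inequalities alone; as written, the step ``deduce $Q'(t)\leq 0$'' assumes essentially the inequality one is trying to prove.

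The paper itself is evidence for how much extra input is needed once one insists on the unconstrained flow: in \S\ref{Sec-A F} the authors only obtain the restricted family \eqref{In-A-F} (relating $W_{2k+1}$ to $W_1$, free boundary only), and even that requires Lemma \ref{Lem-5.1} ($\langle x,\nu\rangle\leq 0$, which uses $\theta=\frac{\pi}{2}$), the divergence-theorem estimate of Lemma \ref{Lem-con1} bounding $(n+1)W_k$ by $W_k^{\mathbb{S}^n}(\widehat{\partial\Sigma})$, and the quantitative gap of Lemma \ref{es-con} --- none of which your sketch supplies, and several of which fail or are unknown for general $\theta$. A secondary unresolved point is the approximation step: producing strictly convex $\theta$-capillary approximants of a weakly convex capillary hypersurface is done in this paper via the free-boundary mean curvature flow of Stahl only for $\theta=\frac{\pi}{2}$; for general contact angle this smoothing (preserving both convexity and the angle condition) is a nontrivial ingredient you would have to justify rather than assume. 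In short, your route via Theorem 1.1 and a global monotone quantity is genuinely different from the constrained-flow proofs of the cited papers, but its central inequality is unproven and the known technology suggests it cannot be obtained by the tools you invoke.
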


Later, the second author with Hu, Wei and Zhou \cite{HWYZ-2023} proved the following Alexandrov-Fenchel inequalities for weakly convex hypersurfaces in $\bar{\mathbb{B}}^{n+1}$.

\begin{thmC}[\cite{HWYZ-2023}]
	Let $\Sigma\subset\bar{\mathbb{B}}^{n+1} (n\geq 2)$ be a properly embedded, weakly convex smooth hypersurface in the unit ball with $\theta$-capillary boundary, where $\t\in (0,\frac{\pi}{2}]$. Then for any $k=1,\cdots, n-1$,  there holds
	\begin{equation}\label{eq-AF}
		W_{k,\theta}(\widehat{\Sigma})\geq (f_k\circ f_0^{-1})(W_{0,\theta}(\widehat{\Sigma})),
	\end{equation}
	where $f_k(r)$ is the strictly increasing function given by $f_k(r)=W_{k,\theta}(\widehat{C_{\theta,r}})$. Equality holds in \eqref{eq-AF} if and only if $\Sigma$ is a spherical cap or a flat ball with $\theta$-capillary boundary.
\end{thmC}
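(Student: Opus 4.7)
The plan is to prove Theorem C by combining the existence-and-convergence theorem just established in this paper with the first-variation formulas for the capillary quermassintegrals of Weng--Xia. First I would reduce to the case of strictly convex $\Sigma$ by a standard approximation argument: a weakly convex, properly embedded capillary hypersurface can be approximated in $C^{1,1}$ by strictly convex capillary hypersurfaces (for instance, by a small outward parallel deformation re-matched to the contact-angle condition), and both sides of \eqref{eq-AF} depend continuously on $\Sigma$ in this topology.

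With $\Sigma$ strictly convex, I would apply the inverse curvature flow \eqref{Inverse-flow} with curvature function $F=E_1=H/n$; by the existence theorem of this paper, the flow exists on $[0,T^{*})$, preserves strict convexity and the capillary angle, and converges to a flat ball $C_{\theta,\infty}(e)$ as $t\to T^{*}$. Along the flow I would compute $\tfrac{d}{dt}W_{j,\theta}(\widehat{\Sigma_t})$ for $j=0$ and $j=k$ from the Weng--Xia variational formulas; after using the $\theta$-capillary boundary condition to absorb the boundary contributions on $\partial\Sigma_t\subset\mathbb{S}^n$, these evolutions reduce to interior integrals of ratios of $E_j$'s weighted by the flow speed $1/E_1$. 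Introduce the deficit
$$Q_k(t):=W_{k,\theta}(\widehat{\Sigma_t})-(f_k\circ f_0^{-1})\bigl(W_{0,\theta}(\widehat{\Sigma_t})\bigr),$$
which vanishes identically on the spherical-cap family and thereby identifies the reference ODE satisfied by $(W_{0,\theta},W_{k,\theta})$ along caps. The core step is to prove $\tfrac{d}{dt}Q_k(t)\le 0$: at each time this becomes an integral inequality comparing $\int_{\Sigma_t}E_k/E_1\,d\mu$ with its spherical-cap counterpart at matched $W_{0,\theta}$ value, which one should establish via the Newton--MacLaurin chain $E_j^{1/j}\le E_{j-1}^{1/(j-1)}$ together with a capillary analogue of the Heintze--Karcher/Minkowski identity in the unit ball.

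Once monotonicity is in hand, the conclusion follows: since $\Sigma_{T^{*}}$ is a flat ball, $Q_k(T^{*})=0$, so $Q_k(0)\ge 0$, which is precisely \eqref{eq-AF}. For the rigidity, if equality holds at $t=0$ then $Q_k\equiv 0$ on $[0,T^{*}]$, which forces pointwise equality in the Newton--MacLaurin chain on every $\Sigma_t$; hence each $\Sigma_t$ is umbilic, and the classification of umbilic capillary hypersurfaces in $\bar{\mathbb B}^{n+1}$ with $\theta\in(0,\pi/2]$ forces $\Sigma$ to be a spherical cap or a flat ball.

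The main obstacle is the monotonicity step. The first variation of $W_{k,\theta}$ under the $\theta$-capillary boundary condition generates boundary integrals on $\partial\Sigma_t\subset\mathbb{S}^n$ that do not cancel term by term, and controlling their sign requires a capillary Minkowski/Heintze--Karcher identity in the unit ball together with careful integration by parts on $\partial\Sigma_t$. In parallel, one must verify that the Newton--MacLaurin chain produces exactly the combination of $E_j$'s matching the ODE that defines $f_k\circ f_0^{-1}$ via the spherical-cap family, so that equality is attained precisely on umbilic hypersurfaces. The restriction $\theta\in(0,\pi/2]$ is naturally tied to these two issues, as the correct sign of the boundary terms and the convexity of $\widehat{\Sigma}$ are only guaranteed in this range.
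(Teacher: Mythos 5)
Theorem C is not proved in this paper at all: it is quoted as background from \cite{HWYZ-2023}, where it is established (as with Theorem B in \cites{Scheuer-Wang-Xia2018,Weng-Xia2022}) by a \emph{locally constrained} mean-curvature-type flow that keeps $W_{0,\theta}$ exactly fixed, makes $W_{k,\theta}$ monotone, and converges to a spherical cap; the present paper's unconstrained inverse curvature flow is only used for the new free-boundary inequalities of Theorem \ref{Thm-A F}. Your proposal — run the flow \eqref{Inverse-flow} with $F=E_1$ and show the deficit $Q_k(t)=W_{k,\theta}(\widehat{\Sigma_t})-(f_k\circ f_0^{-1})(W_{0,\theta}(\widehat{\Sigma_t}))$ is non-increasing — is therefore a genuinely different route, but it has a real gap at its central step.

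The gap is the claim $\tfrac{d}{dt}Q_k\le 0$. By \eqref{s2.dW} with speed $1/E_1$ one has $\tfrac{d}{dt}W_{k,\theta}=\tfrac{n+1-k}{n+1}\int_{\Sigma_t}E_k/E_1\,d\mu_t$ and $\tfrac{d}{dt}W_{0,\theta}=\int_{\Sigma_t}1/E_1\,d\mu_t$, so your monotonicity is equivalent to the integral inequality
\begin{equation*}
\frac{n+1-k}{n+1}\int_{\Sigma_t}\frac{E_k}{E_1}\,d\mu_t\;\le\;\bigl(f_k\circ f_0^{-1}\bigr)'\bigl(W_{0,\theta}(\widehat{\Sigma_t})\bigr)\int_{\Sigma_t}\frac{1}{E_1}\,d\mu_t
\end{equation*}
for every strictly convex capillary hypersurface, with equality on caps. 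The tools you invoke do not yield this: Newton--MacLaurin bounds $E_k/E_1$ from above by $E_{k-1}$, and a capillary Heintze--Karcher/Minkowski identity can only bound $\int 1/E_1$ from below in terms of the enclosed volume; after both steps you would still need an \emph{upper} bound for $\int_{\Sigma_t}E_{k-1}\,d\mu_t$ (for $k=1$, for the area $|\Sigma_t|$) in terms of $W_{0,\theta}(\widehat{\Sigma_t})$ with equality precisely on the cap family — a reverse Alexandrov--Fenchel/isoperimetric-type statement that is not known and is essentially as strong as \eqref{eq-AF} itself. This is exactly the obstruction caused by the absence of scaling in the ball: in Guan--Li's Euclidean argument the monotone quantity is scale-invariant and couples only \emph{adjacent} quermassintegrals, and the published proofs of Theorems B and C avoid the issue by constraining the flow so that one quermassintegral is conserved. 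Unless you can prove the displayed inequality independently, the scheme does not close. A secondary, more repairable point: the reduction from weakly to strictly convex via an ``outward parallel deformation re-matched to the contact angle'' is not a standard construction for capillary boundaries; the usual device is to run a (mean-curvature-type) flow for a short time, as this paper does in \S\ref{Sec-A F} for the free boundary case, and one must also justify continuity of both sides of \eqref{eq-AF} under that approximation.
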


In the second part of this paper, we prove a family of Alexandrov-Fenchel inequalities for weakly convex hypersurfaces in $\bar{\mathbb{B}}^{n+1}$ with free boundary (i.e. $\theta=\frac{\pi}{2}$) using the inverse mean curvature flow. In this case, we denote the quermassintegral $W_{k,\frac{\pi}{2}}(\widehat{\Sigma})$ as $W_k(\widehat{\Sigma})$ for the sake of simplicity.

\begin{thm}\label{Thm-A F}
	Let $\Sigma\subset\bar{\mathbb{B}}^{n+1} (n\geq 2)$ be a properly embedded, weakly convex smooth hypersurface in the unit ball with free boundary. Then for $k\in\mathbb{N}_{+}$ and $2k+1\leq n$, there holds
	\begin{equation}\label{In-A-F}
		W_{2k+1}(\widehat{\Sigma})\geq\frac{\omega_{n-1}}{n}\frac{\prod_{j=0}^k(n-2j)}{\prod_{j=0}^k(n+1-2j)}\sum_{i=0}^k(-1)^i\binom{k}{i}\frac{1}{n-2k+2i}\left[\frac{n(n+1)}{\omega_{n-1}}W_1(\widehat{\Sigma})\right]^{\frac{n-2k+2i}{n}},
	\end{equation}
where $\omega_{n-1}$ is the area of $(n-1)$-dimensional unit sphere, and the equality \eqref{In-A-F} holds if and only if $\Sigma$ is a flat disk with free boundary.
\end{thm}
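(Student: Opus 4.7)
The plan is to establish \eqref{In-A-F} via the inverse mean curvature flow---obtained from the convergence theorem above by taking $F = E_1 = H/n$ and $\theta = \pi/2$---combined with a monotone-quantity argument. Both sides of \eqref{In-A-F} depend continuously on $\widehat{\Sigma}$ in the Hausdorff topology on convex bodies, so a standard approximation reduces to the case that $\Sigma$ is smooth and strictly convex. The convergence theorem then provides a solution $\Sigma_t$ of the inverse mean curvature flow on $[0, T^{*})$, each $\Sigma_t$ strictly convex, converging to the flat disk $C_{\pi/2,\infty}$ as $t \to T^{*}$. Writing $\Phi(W_1)$ for the right-hand side of \eqref{In-A-F} regarded as a function of $W_1$, a direct computation confirms that $\Phi$ recovers $W_{2k+1}$ exactly on the flat disk, so the quantity
\begin{equation*}
Q(t) := W_{2k+1}\bigl(\widehat{\Sigma}_t\bigr) - \Phi\bigl(W_1(\widehat{\Sigma}_t)\bigr)
\end{equation*}
satisfies $Q(T^{*}) = 0$. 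The aim becomes to prove $Q$ is non-increasing along the flow, since then $Q(0) \ge 0$ is precisely \eqref{In-A-F}.

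Using the variational formula of Scheuer-Wang-Xia and Weng-Xia for the capillary quermassintegrals, under this flow
\begin{equation*}
\frac{d}{dt}W_j\bigl(\widehat{\Sigma}_t\bigr) = \frac{n+1-j}{n+1}\int_{\Sigma_t}\frac{E_j}{E_1}\,d\mu,
\end{equation*}
so $\frac{d}{dt}W_1 = \frac{n}{n+1}|\Sigma_t|$ and $\frac{d}{dt}W_{2k+1} = \frac{n-2k}{n+1}\int_{\Sigma_t} E_{2k+1}/E_1\, d\mu$. Recognising the finite sum in \eqref{In-A-F} as the integral $\int_0^r s^{n-2k-1}(1-s^2)^k\, ds$ with $r := [n(n+1)W_1/\omega_{n-1}]^{1/n}$, a short calculation yields
\begin{equation*}
\Phi'(W_1) = \frac{n+1}{n}\, c_{n,k}\, r^{-2k}(1-r^2)^k, \qquad c_{n,k} := \prod_{j=0}^{k}\frac{n-2j}{n+1-2j}.
\end{equation*}
Consequently $Q'(t) \le 0$ is equivalent to the sharp integral inequality
\begin{equation*}
\int_{\Sigma_t}\frac{E_{2k+1}}{E_1}\,d\mu \;\le\; \frac{(n+1)\,c_{n,k}}{n-2k}\,r^{-2k}(1-r^2)^k\,|\Sigma_t|,
\end{equation*}
which is tautological at $k = 0$ and degenerate at the flat disk limit $r \to 1$ (both sides vanish). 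The rigidity statement in \eqref{In-A-F} will follow from the equality case of this inequality combined with the free boundary condition.

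The principal obstacle is establishing this integral inequality. It couples a pointwise curvature quotient with the global area $|\Sigma_t|$ and the global invariant $r = r(W_1)$ in a manner not directly deducible from pointwise Newton-MacLaurin: the scalar factor $r^{-2k}(1-r^2)^k$ depends on the flow's global state, while the integrand is pointwise. I envisage proving it by induction on $k$: the base case is the identity $\int_{\Sigma_t}\! 1\, d\mu = |\Sigma_t|$, and the inductive step will combine a Newton-MacLaurin-type pointwise estimate relating $E_{2k+1}/E_{2k-1}$ to lower order quotients with an integral Hölder or Alexandrov-Fenchel inequality, in order to reduce to the case $k-1$. The recursion $c_{n,k} = c_{n,k-1}\cdot\tfrac{n-2k}{n+1-2k}$ strongly suggests that Hölder exponents can be chosen so that the accumulating constants match and the induction closes cleanly, with each step contributing exactly one factor.
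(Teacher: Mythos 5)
Your overall strategy (run the inverse mean curvature flow from the convergence theorem, compare $W_{2k+1}$ with the model function $\Phi(W_1)$, use monotonicity and the limit at $T^*$) is the same as the paper's, and your computation of $\Phi'$ and of the value of $\Phi$ at the flat disk is correct. The genuine gap is exactly at the point you call the ``principal obstacle'': the inequality $\int_{\Sigma_t}E_{2k+1}/E_1\,d\mu\le\frac{(n+1)c_{n,k}}{n-2k}r^{-2k}(1-r^2)^k|\Sigma_t|$ is the entire content of the theorem and is left unproven, and the mechanism you sketch (pointwise Newton--MacLaurin quotients plus H\"older/Alexandrov--Fenchel with ``exponents chosen so the constants match'') cannot close it: it uses only interior data, whereas the factor $(1-r^2)^k$ is generated by the boundary. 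Since $|\Sigma_t|=(n+1)W_1$, your differential inequality asks for an upper bound of $\int E_{2k+1}/E_1$ in terms of $W_1$ alone, which is strictly stronger than what monotonicity requires and is not what one can prove. What actually works is: Newton--MacLaurin gives $\int E_{2k+1}/E_1\le\int E_{2k}$; the definition \eqref{Defn-Wk} converts $\int_{\Sigma_t}E_{2k}$ into $(n+1)W_{2k+1}(\widehat{\Sigma}_t)-\frac{2k}{n-2k+1}W_{2k-1}^{\mathbb{S}^n}(\widehat{\partial\Sigma_t})$; the boundary quermassintegral is bounded below by $(n+1)W_{2k-1}(\widehat{\Sigma}_t)$ via a divergence--theorem identity $\mathrm{div}_{\Sigma}\bigl(\dot{E}_{2k}^{ij}\langle x,e_i\rangle e_j\bigr)=2kE_{2k-1}-2k\langle x,\nu\rangle E_{2k}$ together with the sign $\langle x,\nu\rangle\le0$ (itself a lemma for strictly convex free-boundary hypersurfaces); this yields $\frac{d}{dt}W_{2k+1}\le(n-2k)W_{2k+1}-\frac{2k(n-2k)}{n-2k+1}W_{2k-1}$, and one then monotonizes the \emph{normalized} deficit $(W_{2k+1}-A_k)\,W_1^{-(n-2k)/n}$ by induction on $k$, feeding in the already established inequality $W_{2k-1}\ge A_{k-1}(W_1)$ rather than any fixed-time bound by $W_1$ alone. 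The induction base is $k=1$ (not the tautological $k=0$), and it needs the separate strict estimate $|\Sigma|\le\frac1n|\partial\Sigma|$ (with a definite gap), proved by comparing $\widehat{\Sigma}$ with the cone over $\partial\Sigma$. Also, identifying $Q(T^*)=0$ is not a one-line check: since $\int E_1^p\to0$, the limit of $W_{2k+1}$ is carried entirely by the boundary term $W_{2k-1}^{\mathbb{S}^n}$ of the equatorial ball (Solanes' formula), and matching it with $\lim\Phi(W_1)$ requires the combinatorial identity $\frac{(2k)!!}{\prod_{j=0}^k(n-2j)}=\sum_{i=0}^k(-1)^i\binom{k}{i}\frac{1}{n-2k+2i}$.

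A second gap is the reduction to the strictly convex case and the rigidity statement. A weakly convex free-boundary hypersurface cannot simply be approximated ``in the Hausdorff topology'' by smooth strictly convex free-boundary hypersurfaces; the paper instead runs Stahl's mean curvature flow with free boundary for a short time, which instantly produces strict convexity, and then needs a \emph{quantitative} form of \eqref{In-A-F} with a positive gap $a_k$ depending only on $n,k,\partial\Sigma$ and $|A|$, uniform along that approximation, in order to conclude the strict inequality for every weakly convex $\Sigma$ that is not a flat disk; this is how the equality case is characterized. Your treatment of rigidity (``follows from the equality case of this inequality'') presupposes the unproven integral inequality and, even granting it, a soft approximation argument only delivers the non-strict inequality, not the characterization of equality.
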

The rest of the paper is organized as follows: In \S\ref{Sec-pre}, we collect some preliminaries which will be used in this paper, including the geometry of hypersurfaces in the ball with capillary boundary, the definition and variational formulas of quermassinegrals for hypersurfaces with $\theta$-capillary boundary and some basic properties for curvature functions and evolution equations along the inverse curvature flow \eqref{Inverse-flow}. In \S\ref{Sec-cur}, we give the $C^2$ estimate along the inverse curvature flow \eqref{Inverse-flow}. In particular, we show that the inverse curvature flow \eqref{Inverse-flow} starting from a strictly convex hypersurface exists for a finite time and remains strictly convex. In \S\ref{Sec-Con}, we show that the flow \eqref{Inverse-flow} converges to a flat ball with $\theta$-capillary boundary around some $e\in\mathbb{S}^n$. In \S\ref{Sec-A F}, we prove the Alexandrov Fenchel inequalities \eqref{In-A-F} for weakly convex hypersurfaces with free boundary.

\textbf{Acknowledge.} The authors would like to express their sincere gratitude to  Prof. Julian Scheuer  for his generous and insightful discussions, particularly his crucial advice on the uniform ellipticity of the curvature function $F$.  The research was supported by National Key R and D Program of China 2021YFA1001800 and 2020YFA0713100, China Postdoctoral Science Foundation No.2024M751605, and Shuimu Tsinghua Scholar Program (No. 2023SM102).
 
\section{Preliminaries}\label{Sec-pre}
\subsection{Hypersurfaces in the ball with capillary boundary} 
Let $\theta\in (0,\frac{\pi}{2}]$. Suppose that $\Sigma\subset\bar{\mathbb{B}}^{n+1}$ is a smooth, properly embedded, weakly convex hypersurface with $\theta$-capillary boundary, which is given by an embedding $x:\bar{\mathbb B}^n\ra \Sigma$ such that
$$
\mathrm{int}(\Sigma)=x({\mathbb B}^{n}) \subset \mathbb B^{n+1}, \quad \partial \Sigma=x(\partial \mathbb B^{n})\subset \mathbb S^n.
$$
Then $\partial\Sigma\subset\mathbb{S}^{n}$ is a weakly convex hypersurface of the unit sphere and bounds a weakly convex body in $\mathbb{S}^n$, which we denote by $\widehat{\partial\Sigma}$, cf. \cite[Theorem 1.1]{CW1970}. Let $\widehat{\Sigma}$ denote  the bounded domain in $\bar{\mathbb{B}}^{n+1}$ enclosed by $\Sigma$ and $\widehat{\partial\Sigma}$. We denote by $\nu$ the unit normal field of $\Sigma$, and $\bar{N}$ the position vector of $\mathbb{S}^n$. The outward pointing conormal vector of $\partial\Sigma\subset\Sigma$ is denote by $\mu$, and the unit normal of $\partial \Sigma$ in $\mathbb{S}^n$ is denote by $\bar{\nu}$, chosen such that the pairs $\{\nu,\mu\}$ and $\{\bar{\nu},\bar{N}\circ x\}$ induce the same orientation in the normal bundle of $\partial\Sigma\subset\mathbb{S}^n$.

It follows from $\langle \bar{N}\circ x,\nu\rangle=-\cos \t$ that
\begin{equation}\label{normal transform}
	\left\{\begin{aligned}
		\bar{N}\circ x=&\sin\theta\mu-\cos\theta\nu,\\
		\bar{\nu}=&\cos\theta\mu+\sin\theta\nu.
	\end{aligned}\right.
\end{equation}
Or equivalently,
\begin{equation}\label{normal transform-II}
\left\{\begin{aligned}
	\mu=&\sin\t \bar{N} \circ x+\cos\t \bar{\nu},\\
	\nu=&-\cos\t \bar{N}\circ x+\sin\t \bar{\nu}.
	\end{aligned}\right.
\end{equation}

We denote by $D$ the Levi-Civita connection of $\mathbb R^{n+1}$ with respect to the Euclidean metric $\delta$, and $\nabla$ the Levi-Civita connection on $\Sigma$ with respect to the induced metric from the embedding $x:\bar{\mathbb B}^n \to \Sigma \subset \mathbb R^{n+1}$. The second fundamental form of $\Sigma$ in $\mathbb R^{n+1}$ is given by
\begin{equation*}
	h(X,Y):=-\langle D_X Y,\nu\rangle,\ \ X,Y\in T(\Sigma).
\end{equation*}

Note that $\partial\Sigma$ can be regarded as a smooth closed hypersurface both in $\mathbb{S}^n$ and $\Sigma$. The second fundamental form of $\partial\Sigma$ in $\mathbb{S}^n$ is given by
\begin{equation*}
	\widehat{h}(X,Y):=-\langle\nabla^{\mathbb{S}^n}_X Y,\bar{\nu}\rangle=-\langle D_XY,\bar{\nu}\rangle,\ \ X,Y\in T(\partial\Sigma).
\end{equation*}
The second fundamental form of $\partial\Sigma$ in $\Sigma$ is given by
\begin{equation*}
	\tilde{h}(X,Y):=-\langle\nabla_X Y,\mu\rangle=-\langle D_XY,\mu\rangle,\ \ X,Y\in T(\partial\Sigma).
\end{equation*}

\begin{prop}[\cite{Weng-Xia2022}]\label{boundary-h-property}
	Let $\Sigma\subset\bar{\mathbb{B}}^{n+1}$ be a $\theta$-capillary hypersurface. Let $\{e_\alpha\}_{\alpha=1}^{n-1}$ be an orthonormal frame of $\partial\Sigma$. Then the following relations hold on $\partial \Sigma$:\\
	$(1)$ $\mu$ is a principal direction of $\Sigma$, i.e., $h(\mu,e_\alpha)=0$.\\
	$(2)$ $h_{\alpha\beta}=\sin\theta\widehat{h}_{\alpha\beta}-\cos\theta\delta_{\alpha\beta}$.\\
	$(3)$ $\tilde{h}_{\alpha\beta}=\cos\theta\widehat{h}_{\alpha\beta}+\sin\theta\delta_{\alpha\beta}=\cot\theta h_{\alpha\beta}+\frac{1}{\sin\theta}\delta_{\alpha\beta}$.\\
	$(4)$ $\nabla_\mu h_{\alpha\beta}=\tilde{h}_{\beta\gamma}\left(h_{\mu\mu}\delta_{\alpha\gamma}-h_{\alpha\gamma}\right)$.
\end{prop}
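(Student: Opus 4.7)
The plan is to derive all four identities by directly exploiting the orthogonal frame decomposition \eqref{normal transform}--\eqref{normal transform-II} at the boundary, together with the Codazzi equation in the flat ambient space $\mathbb R^{n+1}$. Throughout, I will use that $\bar N\circ x$ is nothing but the position vector restricted to $\mathbb S^n$, so $D_X(\bar N\circ x)=X$ for every tangent vector $X$ to $\mathbb S^n$, and the Weingarten-type identity $\langle D_X\nu,Y\rangle=-h(X,Y)$ for $X,Y\in T\Sigma$.

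For (1), I would differentiate the capillarity condition $\langle\bar N\circ x,\nu\rangle=-\cos\theta$ along a boundary direction $e_\alpha\in T(\partial\Sigma)$. Since the right-hand side is a constant and $\langle e_\alpha,\nu\rangle=0$, one gets $\langle\bar N\circ x,D_{e_\alpha}\nu\rangle=0$; plugging the first equation of \eqref{normal transform} into $\bar N\circ x$ and using $\langle\nu,D_{e_\alpha}\nu\rangle=0$ collapses this to $-\sin\theta\,h(\mu,e_\alpha)=0$, which yields (1) because $\sin\theta\neq 0$. For (2) and (3), I would start from the intrinsic definitions $h_{\alpha\beta}=-\langle D_{e_\alpha}e_\beta,\nu\rangle$ and $\tilde h_{\alpha\beta}=-\langle D_{e_\alpha}e_\beta,\mu\rangle$, and expand $\nu,\mu$ via \eqref{normal transform-II}. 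The only non-trivial auxiliary input is the identity
\begin{equation*}
\langle D_{e_\alpha}e_\beta,\bar N\circ x\rangle=-\delta_{\alpha\beta},
\end{equation*}
obtained by differentiating $\langle e_\beta,\bar N\circ x\rangle=0$ (valid because $\partial\Sigma\subset\mathbb S^n$) in the direction $e_\alpha$ and using $D_{e_\alpha}(\bar N\circ x)=e_\alpha$. Combined with $\widehat h_{\alpha\beta}=-\langle D_{e_\alpha}e_\beta,\bar\nu\rangle$, this immediately produces (2); substituting \eqref{normal transform-II} into the $\mu$-formula yields $\tilde h_{\alpha\beta}=\sin\theta\,\delta_{\alpha\beta}+\cos\theta\,\widehat h_{\alpha\beta}$, and eliminating $\widehat h_{\alpha\beta}$ with (2) gives the second expression in (3) after collecting $\sin^2\theta+\cos^2\theta=1$.

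For (4), the natural move is to apply Codazzi in the flat ambient space, $(\nabla_\mu h)(e_\alpha,e_\beta)=(\nabla_{e_\alpha}h)(\mu,e_\beta)$, and then expand the right-hand side as
\begin{equation*}
(\nabla_{e_\alpha}h)(\mu,e_\beta)=e_\alpha\bigl(h(\mu,e_\beta)\bigr)-h(\nabla_{e_\alpha}\mu,e_\beta)-h(\mu,\nabla_{e_\alpha}e_\beta).
\end{equation*}
The first term vanishes along $\partial\Sigma$ by (1) since $e_\alpha$ is tangent to $\partial\Sigma$. For the other two terms I need the standard decomposition of $\nabla_{e_\alpha}e_\beta$ into a component in $T(\partial\Sigma)$ and a $\mu$-component (with coefficient $-\tilde h_{\alpha\beta}$), and the dual identity $\nabla_{e_\alpha}\mu=\tilde h_{\alpha\gamma}e_\gamma$ (obtained from $\langle\nabla_{e_\alpha}\mu,e_\gamma\rangle=-\langle\mu,\nabla_{e_\alpha}e_\gamma\rangle=\tilde h_{\alpha\gamma}$ and $\langle\nabla_{e_\alpha}\mu,\mu\rangle=0$). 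Using (1) again to kill $h(\mu,\cdot)$ on the $T(\partial\Sigma)$-part of $\nabla_{e_\alpha}e_\beta$, everything collapses to $\tilde h_{\alpha\beta}h_{\mu\mu}-\tilde h_{\alpha\gamma}h_{\gamma\beta}$, which after using symmetry of $\tilde h$ and $h$ is exactly $\tilde h_{\beta\gamma}(h_{\mu\mu}\delta_{\alpha\gamma}-h_{\alpha\gamma})$.

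The main obstacle is (4): unlike (1)--(3), which are essentially pointwise linear algebra once the frame decomposition is set up, (4) mixes a Codazzi swap with the boundary-only identity $h(\mu,e_\alpha)=0$, so care is needed because the latter cannot be differentiated in the transverse $\mu$-direction. The Codazzi trick is what resolves this: swapping the two derivatives moves the differentiation back into $\partial\Sigma$, where (1) does hold identically.
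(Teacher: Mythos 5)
Your proof is correct and follows essentially the standard route (the paper itself offers no proof, only the citation to Weng--Xia, whose computation is along the same lines): differentiate the capillary condition $\langle\bar N\circ x,\nu\rangle=-\cos\theta$ tangentially to $\partial\Sigma$ for (1), use the frame decomposition \eqref{normal transform-II} together with $\langle D_{e_\alpha}e_\beta,\bar N\circ x\rangle=-\delta_{\alpha\beta}$ for (2)--(3), and invoke the Codazzi symmetry of $\nabla h$ for (4). Two harmless remarks: with the paper's convention $h(X,Y)=-\langle D_XY,\nu\rangle$ the Weingarten relation is $\langle D_X\nu,Y\rangle=+h(X,Y)$ rather than $-h(X,Y)$, which only flips an irrelevant sign in your derivation of (1); and in the last step of (4), passing from $\tilde h_{\alpha\beta}h_{\mu\mu}-\tilde h_{\alpha\gamma}h_{\gamma\beta}$ to $\tilde h_{\beta\gamma}\left(h_{\mu\mu}\delta_{\alpha\gamma}-h_{\alpha\gamma}\right)$ requires not just symmetry of the two matrices but that they commute on $T(\partial\Sigma)$, which is immediate from (3) since $\tilde h_{\alpha\beta}=\cot\theta\, h_{\alpha\beta}+\tfrac{1}{\sin\theta}\delta_{\alpha\beta}$.
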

In the following of this subsection, we derive an estimate of the height function $\langle x,e\rangle$. Weng and Xia (\cite[Proposition 2.16]{Weng-Xia2022}, see also \cite[Proposition 2.5]{HWYZ-2023}) have proved that for a strictly convex hypersurface $\Sigma$, there exists a unit vector $e\in\mathbb{S}^n$ such that 
\begin{equation}\label{es-h-1}
	\langle x,e\rangle\geq \cos\theta+\delta_0,
\end{equation}
where $\delta_0>0$ is a positive constant depending only on $\Sigma$. Here, we present a new proof of this estimate,  which explains the concrete geometric quantities on which  $\delta$ depends. The underlying idea is similar to that in \cite{Lambert-Scheuer2016}. We begin with a technical lemma.
\begin{lem}\label{Lem-2.2}
	Let $R>0$, $e_0\in\mathbb{R}^{n+1}$ be a unit vector and $C\subset\mathbb{R}^{n+1}$ be a convex closed cone whose vertex at $\cos\theta e_0$. Then for all $\varepsilon>0$, there exists $\delta>0$, such that
	\begin{equation*}
		\langle a-\cos\theta e_0,e_0\rangle\geq \cos\left(\frac{\pi}{2}-\varepsilon\right)||a-\cos\theta e_0||,\quad \forall a\in C
	\end{equation*}
implies
\begin{equation*}
	\langle x-\cos\theta e_0,e_0\rangle\geq R+\delta,\quad \forall B_R(x)\subset C.
\end{equation*}	
\end{lem}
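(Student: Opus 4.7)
The plan is to handle this by elementary geometry: reformulate the hypothesis as an inclusion of $C$ inside an explicit circular cone, and then test that cone's defining inequality on a carefully chosen boundary point of $B_R(x)$.

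First, I would observe that the hypothesis says precisely that $C$ is contained in the closed circular cone
\[
K_\varepsilon := \bigl\{ p \in \mathbb{R}^{n+1} : \langle p - \cos\theta\, e_0, e_0\rangle \geq \sin\varepsilon\, |p - \cos\theta\, e_0| \bigr\},
\]
with apex $\cos\theta\, e_0$, axis direction $e_0$, and half-aperture $\frac{\pi}{2} - \varepsilon$. Hence it suffices to show that $B_R(x) \subset K_\varepsilon$ forces $\langle x - \cos\theta\, e_0, e_0\rangle \geq R + \delta$ for some $\delta > 0$ depending only on $R$ and $\varepsilon$.

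Next, I would decompose $x - \cos\theta\, e_0 = s\, e_0 + v$, with $s := \langle x - \cos\theta\, e_0, e_0\rangle$ and $v \perp e_0$, and probe the cone inequality at the boundary point
\[
y := x + R\bigl(-\cos\varepsilon\, e_0 + \sin\varepsilon\, \hat v\bigr) \in \partial B_R(x),
\]
where $\hat v := v/|v|$ when $v \neq 0$ and $\hat v$ is any unit vector orthogonal to $e_0$ otherwise. Geometrically, $y$ is the point of $\partial B_R(x)$ most at risk of leaving $K_\varepsilon$: its displacement from $x$ decreases the axial component by $R\cos\varepsilon$ while pushing away from the axis by $R\sin\varepsilon$ in the direction of $v$, so the axial defect of $\langle y - \cos\theta\, e_0, e_0\rangle$ and the lateral excess of $|y - \cos\theta\, e_0|$ saturate simultaneously. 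Plugging $y$ into the defining inequality of $K_\varepsilon$ and squaring (once the elementary sign check $s \geq R\cos\varepsilon$ is observed) yields, after a one-line simplification,
\[
s \cos\varepsilon \;\geq\; R + |v|\sin\varepsilon \;\geq\; R,
\]
hence $s \geq R/\cos\varepsilon = R + R(\sec\varepsilon - 1)$. I would therefore take $\delta := R(\sec\varepsilon - 1)$, which is strictly positive precisely because $\varepsilon > 0$.

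There is no real obstacle in this argument; the only subtle point is the choice of probe direction, which must align the axial defect and the lateral excess of $y - \cos\theta\, e_0$ so as to saturate the aperture of $K_\varepsilon$ in a single pointwise test, rather than requiring a further variational argument over $\partial B_R(x)$.
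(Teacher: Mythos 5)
Your proof is correct, and it is more self-contained than what the paper does: the paper proves Lemma \ref{Lem-2.2} only by appealing to \cite[Lemma 10]{Lambert-Scheuer2016} and remarking that the vertex is translated from the origin to $\cos\theta\, e_0$, whereas you give a complete elementary argument. Your reduction is the right one: the hypothesis (with $\cos(\tfrac{\pi}{2}-\varepsilon)=\sin\varepsilon$) is exactly the inclusion of $C$ in the circular cone $K_\varepsilon$ with apex $\cos\theta\, e_0$ and axis $e_0$, and testing the single boundary point $y=x+R(-\cos\varepsilon\, e_0+\sin\varepsilon\,\hat v)$ gives, after the computation $y-\cos\theta\, e_0=(s-R\cos\varepsilon)e_0+(|v|+R\sin\varepsilon)\hat v$, the inequality $s\cos\varepsilon\geq R+|v|\sin\varepsilon$, hence $s\geq R\sec\varepsilon$ and the explicit, cone-independent constant $\delta=R(\sec\varepsilon-1)$, which is in fact sharp for the model circular cone. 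Two small points you should make explicit: since $y\in\partial B_R(x)$ and only $B_R(x)\subset C$ is assumed, you need $\overline{B_R(x)}\subset K_\varepsilon$, which is immediate because $K_\varepsilon$ (or $C$) is closed; and the sign condition $s\geq R\cos\varepsilon$ is not an extra assumption but follows from the cone inequality itself, since its right-hand side is nonnegative. Implicitly you take $\varepsilon\in(0,\tfrac{\pi}{2})$, which is harmless: for $\varepsilon\geq\tfrac{\pi}{2}$ the hypothesis coincides with the one for $\pi-\varepsilon$ (as $\sin\varepsilon=\sin(\pi-\varepsilon)$), and at $\varepsilon=\tfrac{\pi}{2}$ it is vacuous. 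The net gain of your route is a quantitative $\delta$ depending only on $R$ and $\varepsilon$, with no compactness or reference to the literature needed.
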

\begin{proof}
	The proof is similarly as in \cite[Lemma 10]{Lambert-Scheuer2016}, except that now we translate the vertex of the convex cone from origin $O$ to $\cos\theta e_0$.
\end{proof}
\begin{lem}\label{Lem-es-height}
	Let $\Sigma$ be a strictly convex hypersurface in $\bar{\mathbb{B}}^{n+1}$ with $\theta$-capillary boundary with $\theta\in(0,\frac{\pi}{2}]$. Let $e\in\mathrm{int}(\widehat{\partial\Sigma})$ be a direction, such that $\widehat{\partial\Sigma}\subset\mathrm{int}(\widehat{\partial C_{\theta,\infty}}(e))$. Then there exists a positive constant $\delta>0$, depending only on the length of the second fundamental form of $\Sigma$, the distance from $e$ to $\partial\Sigma$ and the distance from $\partial\Sigma$ to $\partial C_{\theta,\infty}(e)$ such that
	\begin{equation}
		\langle x,e\rangle\geq \cos\theta+\delta.
	\end{equation}
\end{lem}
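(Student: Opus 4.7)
The plan is to adapt the approach of Lambert and Scheuer \cite{Lambert-Scheuer2016} to the capillary setting: build a convex cone $C$ with apex $v := \cos\theta\cdot e$ containing $\widehat{\Sigma}$ whose aperture is strictly less than $\pi$, then feed a suitable inscribed ball at the minimizer of $\langle\cdot,e\rangle$ on $\Sigma$ into Lemma \ref{Lem-2.2} to force $\Sigma$ to lie a definite distance above the hyperplane $\Pi := \{y\in\mathbb{R}^{n+1}:\langle y,e\rangle=\cos\theta\}$.

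First I would fix the quantitative gaps. Let $d_1>0$ denote the distance from $\partial\Sigma$ to $\partial C_{\theta,\infty}(e)$ on $\mathbb{S}^n$ and $d_2>0$ the distance from $e$ to $\partial\Sigma$; both are positive by hypothesis. Since $\widehat{\partial\Sigma}\subset\mathrm{int}(\widehat{\partial C_{\theta,\infty}}(e))$, every $x\in\widehat{\partial\Sigma}$ satisfies $\langle x,e\rangle \geq \cos(\theta-d_1)$, so
\[
  \langle x,e\rangle-\cos\theta \;\geq\; \cos(\theta-d_1)-\cos\theta \;=:\; \delta_1 \;>\; 0 \quad \text{on } \widehat{\partial\Sigma}.
\]
Convexity of $\widehat{\Sigma}$ combined with the capillary angle condition at $\partial\Sigma$ (through Proposition \ref{boundary-h-property}) rules out any dip of $\Sigma$ below $\Pi$, yielding the crude bound $\widehat{\Sigma}\subset\{\langle y,e\rangle\geq\cos\theta\}$ and placing the apex $v$ outside $\mathrm{int}(\widehat{\Sigma})$. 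Hence the smallest convex cone $C$ with apex $v$ containing $\widehat{\Sigma}$ is well defined.

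Next I would control the aperture of $C$. An elementary spherical computation using $\langle x,e\rangle \geq \cos(\theta-d_1)$ on $\widehat{\partial\Sigma}$, together with $|x|\leq 1$ and the fact that $e$ sits at distance at least $d_2$ inside $\widehat{\partial\Sigma}$, yields
\[
 \frac{\langle x-v,e\rangle}{|x-v|} \;\geq\; \sin\varepsilon
\]
for every $x\in\widehat{\partial\Sigma}$, where $\varepsilon=\varepsilon(\theta,d_1,d_2)>0$. Convexity of $\widehat{\Sigma}$ propagates this bound to all of $\widehat{\Sigma}$, and in particular to $\Sigma$ itself, so $C$ satisfies the hypothesis of Lemma \ref{Lem-2.2} with $e_0=e$ at this value of $\varepsilon$.

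Finally, let $p\in\Sigma$ realize $\min_\Sigma \langle\cdot,e\rangle$. If $p\in\partial\Sigma$ then $\langle p,e\rangle \geq \cos\theta+\delta_1$ by Step 1. Otherwise $p\in\mathrm{int}(\Sigma)$ and the interior critical-point condition forces $\nu(p)=-e$; the bound $|A|\leq M$ on the second fundamental form gives $\kappa_i(p)\leq M$, so the osculating ball $B_R(p+Re)$ with $R=1/M$ lies inside $\widehat{\Sigma}\subset C$. Applying Lemma \ref{Lem-2.2} to this ball yields $\langle p+Re-v,e\rangle \geq R+\delta_2$ for some $\delta_2=\delta_2(R,\varepsilon)>0$, which rearranges to $\langle p,e\rangle\geq \cos\theta+\delta_2$, and the lemma follows with $\delta := \min\{\delta_1,\delta_2\}$. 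The hard part will be the cone step: the two distance hypotheses must be used jointly, with $d_1$ producing the opening-angle margin and $d_2$ preventing the cone from degenerating at its base, while the crude bound $\widehat{\Sigma}\subset\{\langle y,e\rangle\geq\cos\theta\}$ requires careful use of convexity and the capillary angle formulas to preclude $\Sigma$ from crossing below $\Pi$.
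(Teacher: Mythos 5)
Your overall strategy is the paper's own (a cone with apex $\cos\theta\, e$, Lemma \ref{Lem-2.2}, and an interior ball at the minimizer of $\langle\cdot,e\rangle$), but the step you dispatch as ``convexity of $\widehat{\Sigma}$ propagates this bound to all of $\widehat{\Sigma}$'' is exactly where the real work lies, and as written it is a genuine gap. An aperture bound on $\widehat{\partial\Sigma}$ together with mere convexity of $\widehat{\Sigma}$ does \emph{not} imply the same bound on $\Sigma$, i.e.\ it does not give $\widehat{\Sigma}\subset C_0$ for the cone \eqref{defn-cone} generated by $\widehat{\partial\Sigma}$: for instance, with $\theta=\frac{\pi}{2}$ take the convex hull of a small spherical cap about $e$ and an interior point of the ball near the equator; this body is convex, its spherical boundary face is exactly the cap, it even satisfies your crude half-space bound, yet it escapes every cone of fixed aperture. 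What rules out such configurations is the capillary condition, and the paper uses it through the supporting-hyperplane property \eqref{half-1}, $\langle x-p,\nu(p)\rangle\le 0$ for $p\in\partial\Sigma$ (Weng--Xia), combined with the sign computation \eqref{half-2}, $(1-s)\cos\theta\,(1+\langle\nu(p),e\rangle)\ge 0$, which is where $\theta\le\frac{\pi}{2}$ enters; Proposition \ref{boundary-h-property}, being a statement about second fundamental forms along $\partial\Sigma$, does not yield this. Likewise your ``crude bound'' $\widehat\Sigma\subset\{\langle y,e\rangle\ge\cos\theta\}$ is asserted rather than proved, and it is essentially the qualitative form \eqref{es-h-1} of the very lemma you are proving, so it cannot simply be invoked.

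The final step also needs repair. A curvature bound $\kappa_i(p)\le M$ at the single point $p$ does not by itself produce an enclosed osculating ball of radius $1/M$; one needs the interior-sphere argument of Lambert--Scheuer, which uses the global bound on $|A|$ and the angle bound $\langle\nu,e\rangle<-\cos^2\theta$, and even then the ball must be kept away from the spherical face: if $p$ is an interior point of $\Sigma$ lying close to $\widehat{\partial\Sigma}$, a ball of radius $1/M$ tangent at $p$ may leave $\widehat\Sigma$ through the spherical part (the boundary of $\widehat\Sigma$ has a corner along $\partial\Sigma$, so no rolling theorem applies there). Your dichotomy ``$p\in\partial\Sigma$ or $p\in\mathrm{int}(\Sigma)$'' misses this intermediate regime; the paper instead distinguishes $\mathrm{dist}(a,\widehat{\partial\Sigma})\le\frac{\varepsilon}{2}$ from $\mathrm{dist}(a,\widehat{\partial\Sigma})>\frac{\varepsilon}{2}$ and shrinks the radius to $\min\{R,\frac{\varepsilon}{8}\}$ precisely so that the ball stays in $C_0\cap\bar{\mathbb{B}}^{n+1}$ before Lemma \ref{Lem-2.2} is applied.
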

\begin{proof}
	The case that $\theta=\frac{\pi}{2}$ was already established by Lambert and Scheuer in \cite[Lemma 12]{Lambert-Scheuer2016}. Therefore, we only  consider the case $\theta\in(0,\frac{\pi}{2})$. The existence of such $e$ can be found in \cite[Proposition 2.16]{Weng-Xia2022}. We denote the cone whose vertex at $\cos\theta e$ as 
	\begin{equation}\label{defn-cone}
		C_0=\{y\in\mathbb{R}^{n+1}:y=s(x-\cos\theta e)+\cos\theta e, s\geq 0,x\in\widehat{\partial\Sigma}\}.
	\end{equation}
We claim that $\Sigma\subset C_0\cap \bar{\mathbb{B}}^{n+1}$.  By \cite[Proposition 2.15]{Weng-Xia2022}, for any $p\in\partial\Sigma$, $\Sigma$ lies on one side of $T_{p}{\Sigma}$. More precisely, we have
\begin{equation}\label{half-1}
	\langle x-p,\nu(p)\rangle\leq 0, \forall x\in\Sigma.
\end{equation}
On the other hand,
\begin{align}
	&\langle s(p-\cos\theta e)+\cos\theta e-p,\nu(p)\rangle\notag\\
	=&(1-s)\cos\theta(1+\langle \nu(p),e\rangle)\geq 0,\label{half-2}
\end{align}
and the equality in \eqref{half-2} holds if and only if $s=1$. Combining \eqref{half-1} and \eqref{half-2} yields the claim. 

Assume that the function $\langle x-\cos\theta e,e\rangle$ attains its minimum  on $\Sigma$ at a point $a$ and 
\begin{equation}\label{distepsilon}
	\mathrm{dist}(\partial\Sigma,C_{\theta,\infty}(e))=\varepsilon.
\end{equation}
Then two cases arise:

(i) if $\mathrm{dist}(a,\widehat{\partial\Sigma})\leq\frac{\varepsilon}{2}$,  it follows directly that $\langle a,e\rangle\geq \cos\theta+\frac{\varepsilon}{2}$.

(ii) if $\mathrm{dist}(a,\widehat{\partial\Sigma})>\frac{\varepsilon}{2}$, then by the same argument in \cite[Lemma 12]{Lambert-Scheuer2016}, there exists $R>0$ depends on the the length of the second fundamental form of $\Sigma$ and $\langle\nu,e\rangle$, such that $B_R$ is an interior sphere at $a$. By \cite[Proposition 2.16]{Weng-Xia2022}, $\langle\nu,e\rangle<-\cos^2{\theta}$ and hence $R$ depends on the the length of the second fundamental form of $\Sigma$ and $\theta$. If we take $r=\min\{R,\frac{\varepsilon}{8}\}$, then $B_r$ is also an interior ball at $a$ with radius $r$. Moreover, for $\forall x\in B_r$, we have
\begin{equation}
	\mathrm{dist}(x,\widehat{\partial\Sigma})\geq \mathrm{dist}(a,\widehat{\partial\Sigma})-2r\geq \frac{\varepsilon}{4},
\end{equation}
which implies that $B_r\subset C_0\cap\bar{\mathbb{B}}^{n+1}$. By lemma \ref{Lem-2.2}, there exists a $\delta>0$, such that $\langle a,e\rangle\geq \cos\theta+\delta$.
\end{proof}

\subsection{Quermassintegrals for hypersurfaces with $\theta$-capillary boundary}\label{sub-sec2.2}
The quermassintegrals of weakly convex hypersurfaces were first introduced  by Scheuer, Wang and Xia \cite{Scheuer-Wang-Xia2018} for hypersurfaces with free boundary in $\mathbb{B}^{n+1}$. Later, Weng and Xia \cite{Weng-Xia2022} generalized the definition to  hypersurfaces with $\theta$-capillary boundary  in $\mathbb{B}^{n+1}$. We briefly recall the definition here. 

Let $\Sigma$ be a weakly convex hypersurface in $\mathbb{B}^{n+1}$ with $\theta$-capillary boundary. Then $\partial\Sigma\subset\mathbb{S}^{n}$ is a weakly convex hypersurface of the unit sphere, which bounds a weakly convex body in $\mathbb{S}^n$, denote by $\widehat{\partial\Sigma}$. Denote by $\widehat{\Sigma}$ the bounded domain in $\bar{\mathbb{B}}^{n+1}$ enclosed by $\Sigma$ and $\widehat{\partial\Sigma}$. The quermassintegrals for $\widehat{\Sigma}$ are defined as: 
\begin{align}
	W_{0,\theta}(\widehat{\Sigma})=&|\widehat{\Sigma}|, \label{s2:quermassintegral-0}\\
	W_{1,\theta}(\widehat{\Sigma})=&\frac{1}{n+1}\left(|\Sigma|-\cos\theta W_0^{\mathbb{S}^n}(\widehat{\partial\Sigma})\right),
\end{align}
and for $1\leq k\leq n-1$,
\begin{align}\label{s2:quermassintegral-2}
	\begin{aligned}
		&W_{k+1,\theta}(\widehat{\Sigma})=\frac{1}{n+1}\left\{\int_{\Sigma}E_{k}dA-\cos\theta\sin^k\theta W_k^{\mathbb{S}^n}(\widehat{\partial\Sigma})\right.  \\
		&\quad \quad \quad \left.-\sum_{\ell=0}^{k-1}\frac{(-1)^{k+\ell}}{n-\ell}\binom{k}{\ell}\left[(n-k)\cos^2\theta+k-\ell\right]\cos^{k-1-\ell}\theta\tan^\ell\theta W_\ell^{\mathbb{S}^n}(\widehat{\partial\Sigma})\right\},
	\end{aligned}
\end{align}
where $W_{k}^{\mathbb{S}^n}(\widehat{\partial \Sigma}), 0\leq k\leq n$ are the quermassintegrals in $\mathbb{S}^n$ which are defined by (see \cite{Sol06})
\begin{align}
	W_0^{\mathbb S^n}(\widehat{\partial \Sigma})=&|\widehat{\partial \Sigma}|, \label{W_0}\\
	W_1^{\mathbb S^n}(\widehat{\partial \Sigma})=&\frac{1}{n}|\partial \Sigma|,\label{W_1}\\
	W_{k}^{\mathbb S^n}(\widehat{\partial \Sigma})=&\frac{1}{n}\int_{\partial \Sigma} E_{k-1}^{\mathbb S^n} ds+\frac{k-1}{n-k+2}W^{\mathbb S^n}_{k-2}(\widehat{\partial \Sigma}), \quad 2\leq k\leq n,\label{W_k}
\end{align}
where $ds$ is the area element and $E_{k}^{\mathbb S^n}$ is the $k$th normalized mean curvature of the hypersurface $\partial \Sigma$ in $\mathbb S^n$.

The quermassintegrals defined above satisfy the following variational property.

\begin{prop}[\cite{Weng-Xia2022}]
	Let $\Sigma_t\subset\bar{\mathbb{B}}^{n+1}$ be a family of smooth, properly embedded hypersurfaces with $\theta$-capillary boundary on $\mathbb{S}^n$, given by $x(\cdot,t):\bar{\mathbb{B}}^n\rightarrow \bar{\mathbb{B}}^{n+1}$, such that
	\begin{equation*}
		\left(\partial_tx\right)^\perp=f\nu
	\end{equation*}
	for some normal speed function $f$. Then there holds
	\begin{equation}\label{s2.dW}
		\frac{d}{dt}W_{k,\theta}(\widehat{\Sigma}_t)=\frac{n+1-k}{n+1}\int_{\Sigma_t}E_kfd\mu_t, \quad 0\leq k\leq n,
	\end{equation}	
		and
		\begin{equation}
		\frac{d}{dt}W_{n+1,\theta}(\widehat{\Sigma_t})=0.
			\end{equation}
\end{prop}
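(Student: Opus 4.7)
The plan is to prove the two variational identities by differentiating the explicit definitions \eqref{s2:quermassintegral-0}--\eqref{s2:quermassintegral-2} term by term and showing the boundary contributions telescope. I would proceed by induction on $k$, verifying the base cases $k=0,1$ directly and then, for the inductive step, combining the first variation of $\int_{\Sigma_t}E_k\,d\mu_t$ with the variations of the spherical quermassintegrals $W_\ell^{\mathbb S^n}(\widehat{\partial\Sigma}_t)$ that appear in the definition of $W_{k+1,\theta}$.

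For the base cases, $W_{0,\theta}=|\widehat{\Sigma}|$ and the divergence theorem applied to the constant vector field with the observation that on the $\mathbb S^n$-portion of $\partial\widehat{\Sigma}_t$ the velocity is tangential to $\mathbb S^n$, yields $\frac{d}{dt}|\widehat{\Sigma}_t|=\int_{\Sigma_t}f\,d\mu_t$, which is the $k=0$ case. For $k=1$, I would apply the first variation of area, which produces the bulk term $\int_{\Sigma_t}Hf\,d\mu_t$ plus a boundary term $\int_{\partial\Sigma_t}\langle\partial_t x,\mu\rangle\,ds_t$; using the capillary identity \eqref{normal transform} and the fact that the boundary sweeps a region on $\mathbb S^n$ whose area change is $\int_{\partial\Sigma_t}\langle\partial_t x,\bar\nu\rangle\,ds_t$, together with the decomposition of $\partial_t x$ on $\partial\Sigma_t$ into its $\nu$ and $\mu$ components, the $-\cos\theta W_0^{\mathbb S^n}(\widehat{\partial\Sigma}_t)$ term in $W_{1,\theta}$ exactly cancels the boundary contribution, leaving $\frac{n}{n+1}\int E_1 f\,d\mu_t$.

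For general $k$, the computational engine is the standard evolution formula (valid in $\R^{n+1}$)
\begin{equation*}
\frac{d}{dt}\int_{\Sigma_t}E_k\,d\mu_t=(n-k)\int_{\Sigma_t}E_{k+1}f\,d\mu_t+\tfrac{1}{\binom{n}{k}}\int_{\partial\Sigma_t}T_{k-1}^{ij}\mu_i\nabla_j f\,ds_t,
\end{equation*}
obtained from the Newton-tensor identity $T_{k-1}^{ij}h_{ij}=k\binom{n}{k}E_k$, the divergence-freeness of $T_{k-1}$ in Euclidean space, and integration by parts. To handle the boundary integral I would invoke Proposition \ref{boundary-h-property}: since $\mu$ is a principal direction, $T_{k-1}^{ij}\mu_i=\sigma_{k-1}(\kappa|_{T\partial\Sigma})\mu^j$, collapsing the boundary integrand to a scalar expression in the principal curvatures of $\partial\Sigma$ in $\Sigma$; relation (2) of that proposition then converts $h_{\alpha\beta}$ into $\widehat h_{\alpha\beta}$ via $\sin\theta$, $\cos\theta$, producing a polynomial in the symmetric functions of $\widehat h$. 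Separately, the variational formulas for $W_\ell^{\mathbb S^n}$ (from \eqref{W_0}--\eqref{W_k} applied to a flow of $\partial\Sigma_t$ in $\mathbb S^n$ with normal speed $\langle\partial_t x,\bar\nu\rangle=f\sin\theta+\cos\theta\langle\partial_t x,\mu\rangle$ by \eqref{normal transform}) express $\frac{d}{dt}W_\ell^{\mathbb S^n}(\widehat{\partial\Sigma}_t)$ as a linear combination of integrals over $\partial\Sigma_t$ of $E_j^{\mathbb S^n}$ against the effective normal speed. Plugging these into the derivative of \eqref{s2:quermassintegral-2}, the task reduces to a combinatorial identity in $\cos^{k-\ell}\theta\tan^\ell\theta$ and binomial coefficients showing that the unwanted boundary contributions cancel exactly.

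The main obstacle is precisely this combinatorial cancellation: the coefficient $\frac{(-1)^{k+\ell}}{n-\ell}\binom{k}{\ell}[(n-k)\cos^2\theta+k-\ell]\cos^{k-1-\ell}\theta\tan^\ell\theta$ in the definition is engineered for this cancellation, but verifying it requires carefully expanding the binomial identity $h_{\alpha\beta}=\sin\theta\widehat h_{\alpha\beta}-\cos\theta\delta_{\alpha\beta}$ through $\sigma_{k-1}$, collecting like powers of $\cos\theta$, and matching against the spherical recursion \eqref{W_k}. An induction on $k$ is the cleanest way to organize this: assuming the formula at level $k$, one computes $\frac{d}{dt}W_{k+1,\theta}$ and uses the inductive hypothesis to replace the $W_\ell^{\mathbb S^n}$-derivatives, reducing the identity to a purely algebraic statement. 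Finally, $\frac{d}{dt}W_{n+1,\theta}=0$ follows either because $W_{n+1,\theta}$ is defined as a topological/Gauss--Bonnet-type invariant on $\widehat{\Sigma}$, or by extending the formula to $k=n+1$ where the factor $(n+1-k)/(n+1)$ vanishes; I would prove it by direct computation using the extended definition and the Gauss--Bonnet theorem applied to $\widehat{\Sigma}$ together with the capillary boundary identities.
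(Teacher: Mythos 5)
This paper does not actually prove the proposition: it is quoted from Weng--Xia \cite{Weng-Xia2022}, and your plan essentially reconstructs the computation carried out there (first variation of $\int_{\Sigma_t}E_k\,d\mu_t$ with boundary terms, the variational formulas for the spherical quermassintegrals of $\widehat{\partial\Sigma_t}$, and the cancellation that the coefficients in \eqref{s2:quermassintegral-2} are engineered to produce). Within that route, however, there are two concrete gaps. First, your displayed general-$k$ variation formula is incomplete: since $\partial\Sigma_t$ is constrained to $\mathbb{S}^n$, the velocity necessarily has the tangential boundary component $\langle\partial_t x,\mu\rangle=f\cot\theta$ (cf. \eqref{con-T}), and the first variation of $\int_{\Sigma_t}E_k\,d\mu_t$ contains, besides the term involving $T_{k-1}(\mu,\nabla f)$ (whose sign you should also check against the paper's conventions), the flux term
\begin{equation*}
\int_{\partial\Sigma_t}E_k\,\langle\partial_t x,\mu\rangle\,ds_t=\cot\theta\int_{\partial\Sigma_t}E_k\,f\,ds_t,
\end{equation*}
coming from the $\mathrm{div}(E_k\mathcal{T})$ contribution implicit in \eqref{ev-area}. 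You kept the analogous term in the $k=1$ base case but dropped it for general $k$; without it, and without converting $\nabla_\mu f$ on $\partial\Sigma_t$ into curvature terms via \eqref{de-boundary}, the boundary contributions cannot match the derivatives of the $W_\ell^{\mathbb{S}^n}$ terms, so the asserted cancellation would fail as written.

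Second, the heart of the proposition is exactly the cancellation against $\frac{d}{dt}W_\ell^{\mathbb{S}^n}(\widehat{\partial\Sigma_t})=\frac{n-\ell}{n}\int_{\partial\Sigma_t}E_\ell^{\mathbb{S}^n}\,\frac{f}{\sin\theta}\,ds_t$ (a known spherical formula which does not follow immediately from the recursion \eqref{W_k} and needs citation or proof), and this is what forces the peculiar coefficients in \eqref{s2:quermassintegral-2}; in your proposal it is only asserted as ``a combinatorial identity'' and never carried out, yet this verification is the entire content of \eqref{s2.dW}. Likewise, $\frac{d}{dt}W_{n+1,\theta}=0$ is not a definitional Gauss--Bonnet fact: it is obtained from the same computation at $k=n$, where the bulk term disappears because of the factor $n-k$, and one must still show that the remaining boundary terms cancel. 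So the strategy is the right one and matches the cited source, but as it stands the proposal is an outline resting on an incorrect intermediate identity and with the decisive algebra unverified.
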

\subsection{Basic properties for curvature function} In this subsection,  we collect some basic properties for curvature functions under mild assumptions for later use.

	\begin{lem}[\cite{guan2014}]\label{Lemma-NMI}
		Let $1\leq k\leq n-1$. For $\kappa\in\Gamma_k^{+}$, there holds
		\begin{equation}\label{In-NewMac}
			E_{k+1}(\kappa)E_{k-1}(\kappa)\leq E_k^2(\kappa),\quad E_{k+1}(\kappa)\leq E_k^{\frac{k+1}{k}}(\kappa).
		\end{equation}
		Equality holds in \eqref{In-NewMac} if and only if $\kappa_1=\cdots=\kappa_n$.
	\end{lem}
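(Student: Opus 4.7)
The plan is to establish the two inequalities in order. Newton's inequality $E_{k+1}E_{k-1}\leq E_k^2$ is the substantive part; the Maclaurin-type bound $E_{k+1}\leq E_k^{(k+1)/k}$ follows formally by a telescoping argument once positivity on $\Gamma_k^+$ is invoked.

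For Newton's inequality, I would use the classical polynomial-differentiation technique based on Rolle's theorem. Associate to $\kappa=(\kappa_1,\dots,\kappa_n)\in\mathbb{R}^n$ the polynomial
\begin{equation*}
	P(t)=\prod_{i=1}^n(1+\kappa_i t)=\sum_{j=0}^{n}\binom{n}{j}E_j(\kappa)\,t^j,
\end{equation*}
which has only real roots (at $t=-1/\kappa_i$, with appropriate convention when $\kappa_i=0$). Real-rootedness of a polynomial is preserved both by differentiation (Rolle) and by the involution $t\mapsto 1/t$ applied to the reverse polynomial. By alternating $k-1$ differentiations on $P$ and $n-k-1$ differentiations on its reverse, one isolates three consecutive coefficients and obtains a real-rooted quadratic whose coefficients are positive scalar multiples of $E_{k-1}$, $E_k$, $E_{k+1}$. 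The nonnegative-discriminant condition for this quadratic is precisely $E_{k+1}E_{k-1}\leq E_k^2$. This argument uses no sign assumption on $\kappa$, so Newton's inequality in fact holds on all of $\mathbb{R}^n$.

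For the Maclaurin-type bound, positivity on $\Gamma_k^+$ enters. There $E_0,\dots,E_k$ are strictly positive, so Newton's inequality rearranges to $E_{j+1}/E_j\leq E_j/E_{j-1}$ for $1\leq j\leq k$. The ratios $E_{j+1}/E_j$ are thus monotone decreasing in $j$, hence
\begin{equation*}
	\left(\frac{E_{k+1}}{E_k}\right)^{k+1}\leq\prod_{j=0}^{k}\frac{E_{j+1}}{E_j}=\frac{E_{k+1}}{E_0}=E_{k+1},
\end{equation*}
which rearranges immediately to $E_{k+1}\leq E_k^{(k+1)/k}$.

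The equality discussion is the point most prone to a slip and is where I expect to spend the most care. Equality in the Newton step forces the isolated quadratic to have a double root; pulling this equality backward through each Rolle application requires every intermediate derivative to have that root with maximal multiplicity, which ultimately forces all roots of $P$ to coincide, i.e.\ $\kappa_1=\cdots=\kappa_n$. Equality in the telescoping for the second inequality requires all the intermediate Newton inequalities to be equalities simultaneously, which by the previous paragraph again forces $\kappa_1=\cdots=\kappa_n$.
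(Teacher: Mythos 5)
The paper itself gives no proof of this lemma; it is quoted directly from Guan's lecture notes, so your argument can only be judged on its own terms. The route you take (Rolle plus coefficient reversal for Newton's inequality, telescoping of the ratios $E_{j+1}/E_j$ for the Maclaurin-type bound, back-propagation of the double root for the equality case) is the classical one and is essentially sound, but two points need tightening before it proves the statement as written.

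First, the telescoping step silently assumes $E_{k+1}>0$. On $\Gamma_k^{+}$ only $E_1,\dots,E_k$ are guaranteed positive; for instance $\kappa=(3,-1)\in\Gamma_1^{+}$ has $E_2<0$. When $E_{k+1}\le 0$ your displayed inequality $\left(E_{k+1}/E_k\right)^{k+1}\le E_{k+1}$ can fail (take $k+1$ even), and the final ``rearranges immediately'' step divides by $E_{k+1}$. The repair is trivial --- if $E_{k+1}\le 0$ the second inequality in \eqref{In-NewMac} holds because $E_k^{\frac{k+1}{k}}>0$ --- but the case split must be made explicit. Second, the equality discussion cannot be as sign-free as you suggest: on all of $\mathbb{R}^n$, equality in Newton's inequality does not force $\kappa_1=\cdots=\kappa_n$ (e.g. $\kappa=(1,0,\dots,0)$ gives $E_k^2=E_{k-1}E_{k+1}=0$ for $2\le k\le n-1$). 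In your Rolle/reversal picture this is precisely the degenerate situation in which an intermediate polynomial has a root at the origin, so the reversal drops degree and the ``two distinct roots'' property need not survive; the back-propagation you sketch breaks exactly there. You must invoke $\kappa\in\Gamma_k^{+}$: since $E_{k-1},E_k>0$, equality forces $E_{k+1}>0$, which excludes the degenerate vanishing and lets the propagation go through (a real-rooted polynomial of degree at least three with two distinct roots has a derivative with two distinct roots, and reversal with nonvanishing extreme coefficients is root-preserving). Likewise, for the inequality itself the ``appropriate convention when $\kappa_i=0$'' is most cleanly handled by continuity from the case of nonzero, pairwise distinct $\kappa_i$. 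With these amendments your proof is complete, and the equality case of the second inequality follows, as you indicate, from equality in an intermediate Newton step.
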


The following property can be found in \cite[Lemma 2.2.19, Lemma 2.2.20]{Ger06}.
\begin{lem}[\cite{Ger06}]
	Let $F\in C^2(\Gamma^{+})\cap C^0(\bar{\Gamma}^{+})$ be a strictly monotone, concave curvature function, positively homogeneous of degree 1 with $F(1,\cdots,1)>0$, then
	\begin{equation}\label{re-F-H}
		F(\kappa)\leq F(1,\cdots,1)E_1(\kappa),
	\end{equation}
and
\begin{equation}\label{sum-F}
	\sum_{i=1}^n F_i(\kappa)\geq F(1,\cdots,1),
\end{equation}
where $\kappa=(\kappa_i)_{i=1}^n\in\Gamma^{+}$.
\end{lem}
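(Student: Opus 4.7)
The two inequalities both reduce to a single supporting-hyperplane application of concavity combined with Euler's identity for homogeneous functions, so the plan is short and essentially algebraic. I would first pin down the values $F_i(1,\dots,1)$, then write down the correct tangent plane inequalities.

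\textbf{Preliminary step.} Since $F$ is a curvature function, it is a symmetric function of $\kappa_1,\dots,\kappa_n$, so $F_i(1,\dots,1)=F_j(1,\dots,1)$ for every $i,j$. Combining this with Euler's identity (which follows from $F(\lambda\kappa)=\lambda F(\kappa)$ by differentiating in $\lambda$ at $\lambda=1$),
\begin{equation*}
\sum_{i=1}^n F_i(\kappa)\,\kappa_i=F(\kappa),
\end{equation*}
evaluated at $\kappa=(1,\dots,1)$ gives $F_i(1,\dots,1)=\tfrac{1}{n}F(1,\dots,1)$ for each $i$.

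\textbf{Proof of \eqref{re-F-H}.} Apply the concavity inequality with base point $(1,\dots,1)$: for every $\kappa\in\Gamma^{+}$,
\begin{equation*}
F(\kappa)\le F(1,\dots,1)+\sum_{i=1}^n F_i(1,\dots,1)(\kappa_i-1).
\end{equation*}
Plugging in $F_i(1,\dots,1)=F(1,\dots,1)/n$ and using $\sum_i\kappa_i = nE_1(\kappa)$ collapses the right-hand side to $F(1,\dots,1)E_1(\kappa)$, which is exactly \eqref{re-F-H}.

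\textbf{Proof of \eqref{sum-F}.} Now apply concavity with base point $\kappa$ and test point $(1,\dots,1)$:
\begin{equation*}
F(1,\dots,1)\le F(\kappa)+\sum_{i=1}^n F_i(\kappa)(1-\kappa_i)=F(\kappa)+\sum_{i=1}^n F_i(\kappa)-\sum_{i=1}^n F_i(\kappa)\,\kappa_i.
\end{equation*}
The last sum equals $F(\kappa)$ by Euler, so the two $F(\kappa)$ terms cancel and we obtain $F(1,\dots,1)\le\sum_i F_i(\kappa)$, which is \eqref{sum-F}.

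The only genuine subtlety is justifying the tangent-plane form of concavity on $\Gamma^{+}$ and the value $F_i(1,\dots,1)=F(1,\dots,1)/n$; the former is standard for $C^{2}$ concave functions on a convex open set, and the latter follows from the symmetry of $F$ together with Euler's identity, both of which are built into the hypotheses (symmetry coming from $F$ being a function of the eigenvalues). No flow analysis or geometric input is needed, so I do not anticipate any real obstacle.
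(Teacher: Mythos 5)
Your proof is correct and is essentially the standard argument behind the cited result in Gerhardt's book (the paper itself only cites \cite{Ger06} and gives no proof): the supporting-hyperplane inequality for the concave function $F$ at the base points $(1,\dots,1)$ and $\kappa$, combined with Euler's identity and the symmetry of $F$ to evaluate $F_i(1,\dots,1)=F(1,\dots,1)/n$. Both tangent-plane applications are legitimate since $(1,\dots,1)$ and $\kappa$ lie in the open convex cone $\Gamma^{+}$ where $F$ is $C^2$ and concave, so there is no gap.
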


\begin{lem}[\cites{And1994,Ecker-Huisken89}]\label{lem-concave}
	Suppose that $F$ is a concave function and $\kappa\in\Gamma^{+}$, then there holds
	\begin{equation}
		\left(\frac{\partial F}{\partial\kappa_i}-\frac{\partial F}{\partial\kappa_j}\right)(\kappa_i-\kappa_j)\leq 0.
	\end{equation}
\end{lem}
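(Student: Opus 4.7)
The inequality is a classical consequence of two features of $F$: symmetry (implicit in the fact that $F$ is defined on the symmetric cone $\Gamma$ and depends only on the eigenvalues $\kappa=(\kappa_1,\dots,\kappa_n)$, hence is invariant under permutations) together with concavity. The plan is to exploit the symmetry to manufacture a point where $F$ takes the same value as at $\kappa$, and then feed this into the concavity inequality.

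Concretely, I would fix indices $i\neq j$ and define $\kappa'\in\Gamma^{+}$ to be the vector obtained from $\kappa$ by interchanging the $i$th and $j$th entries while keeping all others fixed. Since $\Gamma$ is symmetric and $F$ is a symmetric function of its arguments, one has $F(\kappa')=F(\kappa)$. Concavity of $F$ on the convex set $\Gamma^{+}$ yields the standard supporting-hyperplane inequality
\begin{equation*}
F(\kappa')-F(\kappa)\;\leq\;\sum_{k=1}^{n}F_k(\kappa)\,(\kappa'_k-\kappa_k).
\end{equation*}
The right-hand side collapses to $F_i(\kappa)(\kappa_j-\kappa_i)+F_j(\kappa)(\kappa_i-\kappa_j)=-\bigl(F_i(\kappa)-F_j(\kappa)\bigr)(\kappa_i-\kappa_j)$, while the left-hand side is zero. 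Rearranging gives the claim.

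There is essentially no obstacle here: the only subtlety is making the symmetry of $F$ explicit (this is part of the standing framework, since $F$ acts on the unordered eigenvalues of the Weingarten operator on the symmetric cone $\Gamma$), after which the proof is a single application of the first-order characterization of concavity. If one wished to avoid appealing to differentiability in the supporting-hyperplane step, one could equivalently apply concavity to the midpoint $\tfrac{1}{2}(\kappa+\kappa')$, which is fixed by the transposition, and then differentiate, but the version above is cleaner. I would present the proof in essentially this short paragraph.
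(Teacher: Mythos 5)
Your proof is correct. The paper does not prove this lemma itself---it simply cites Andrews and Ecker--Huisken---and your argument (swap the $i$th and $j$th entries of $\kappa$, use the symmetry of $F$ and the first-order concavity inequality $F(\kappa')-F(\kappa)\leq\sum_k F_k(\kappa)(\kappa'_k-\kappa_k)$) is exactly the standard argument behind those references. You are also right to flag that symmetry of $F$ is essential and only implicit in the statement (concavity alone fails, e.g.\ for $F(\kappa)=\kappa_1$); it is part of the paper's standing framework for curvature functions, and $\kappa'\in\Gamma^{+}$ since the positive cone is permutation-invariant, so the argument is complete as written. The closing aside about ``applying concavity to the midpoint and then differentiating'' is unnecessary and slightly muddled, but it does not affect the main proof.
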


\begin{lem}[\cite{Ger96}]\label{lem-ger96}
	Let $F$ be a curvature function defined on $\Gamma^{+}$, which is concave and inverse concave. Let $b^{ij}$ be the inverse matrix of $h_{ij}$, then
	\begin{equation}\label{pro-con-in}
		\ddot{F}^{ij,k\ell}\eta_{ij}\eta_{k\ell}+2\dot{F}^{ik}b^{j\ell}\eta_{ij}\eta_{k\ell}\geq 2F^{-1}(\dot{F}^{ij}\eta_{ij})^2,
	\end{equation}
for any real symmetric $n\times n$ matrix $(\eta_{ij})$. 
\end{lem}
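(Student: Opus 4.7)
The plan is to derive the inequality as a direct consequence of the concavity of $F_*$. By Assumption \ref{assum}(iv), $F_*$ is concave on $\Gamma^+$, and since $F_*$ is symmetric in its arguments it extends to a concave function on positive-definite symmetric matrices via $F_*(A)=F(A^{-1})^{-1}$. The idea is to test this concavity along a cleverly chosen direction and read off the desired inequality.

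Fix a positive-definite symmetric matrix $h=(h_{ij})$ with inverse $b=(b^{ij})$, and let $\eta$ be an arbitrary symmetric matrix. Set $\zeta=-b\eta b$, so that $\zeta$ is the first variation of $b$ corresponding to the variation $\eta$ of $h$, and consider $A(t):=(b+t\zeta)^{-1}$. Differentiating $A(t)(b+t\zeta)=I$ twice yields the Taylor expansion
\begin{equation*}
A(t)=h+t\eta+t^{2}\eta b\eta+O(t^{3}),
\end{equation*}
which is a routine computation using $hb=I$. Plugging this into the two-jet of $F$ at $h$ gives
\begin{equation*}
F(A(t))=F(h)+t\,\dot F^{ij}\eta_{ij}+t^{2}\Bigl[\dot F^{ij}(\eta b\eta)_{ij}+\tfrac12\ddot F^{ij,k\ell}\eta_{ij}\eta_{k\ell}\Bigr]+O(t^{3}),
\end{equation*}
and then taking the reciprocal and comparing the $t^{2}$ coefficients yields
\begin{equation*}
\tfrac12\,\frac{d^{2}}{dt^{2}}\Big|_{t=0}F_*(b+t\zeta)=-\frac{1}{F(h)^{2}}\Bigl[\dot F^{ij}(\eta b\eta)_{ij}+\tfrac12\ddot F^{ij,k\ell}\eta_{ij}\eta_{k\ell}\Bigr]+\frac{1}{F(h)^{3}}\bigl(\dot F^{ij}\eta_{ij}\bigr)^{2}.
\end{equation*}

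The concavity of $F_*$ forces the left-hand side to be nonpositive, which after multiplication by $2F(h)^{2}$ and rearrangement gives
\begin{equation*}
\ddot F^{ij,k\ell}\eta_{ij}\eta_{k\ell}+2\dot F^{ij}(\eta b\eta)_{ij}\geq 2F^{-1}\bigl(\dot F^{ij}\eta_{ij}\bigr)^{2}.
\end{equation*}
Finally, observe that $(\eta b\eta)_{ij}=\eta_{ip}b^{pq}\eta_{qj}$, so using the symmetry of $\eta$ and relabeling indices, $2\dot F^{ij}(\eta b\eta)_{ij}=2\dot F^{ik}b^{j\ell}\eta_{ij}\eta_{k\ell}$, giving exactly the stated inequality \eqref{pro-con-in}. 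Since $\eta$ was arbitrary, the proof is complete. The only step requiring any care is verifying the $t^{2}$ coefficient in $A(t)$ and keeping track of index conventions when matching $2\dot F^{ij}(\eta b\eta)_{ij}$ with the stated contraction $2\dot F^{ik}b^{j\ell}\eta_{ij}\eta_{k\ell}$; neither is a serious obstacle, and the concavity assumption on $F$ itself is not even needed for this particular identity—it is inverse concavity alone that drives the bound.
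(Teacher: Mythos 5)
Your argument is correct and is essentially the standard proof of this lemma from the cited literature (the paper itself states it without proof, referring to \cite{Ger96}): the inequality is exactly the second-order expression of the concavity of $F_{*}$ at $b=h^{-1}$ in the direction $\zeta=-b\eta b$, and your expansion $A(t)=h+t\eta+t^{2}\eta b\eta+O(t^{3})$, the reciprocal Taylor comparison, and the index matching $2\dot F^{ij}(\eta b\eta)_{ij}=2\dot F^{ik}b^{j\ell}\eta_{ij}\eta_{k\ell}$ are all verified. Your closing observation is also accurate: only inverse concavity of $F$ is used to obtain \eqref{pro-con-in}; the concavity of $F$ itself plays no role here.
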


\subsection{Evolution equations} In this subsection, we derive some evolution equations along the flow \eqref{Inverse-flow} for later use. Along a general flow
\begin{equation}\label{general-flow}
	\left\{\begin{aligned}
		\left(\partial_t x\right)^{\bot}&=f\nu &\text{in}\quad \bar{\mathbb{B}}^n \times[0,T),\\
		\langle\bar{N}\circ x,\nu\rangle&=-\cos\theta  &\text{on}\quad \partial\bar{\mathbb{B}}^n \times[0,T),\\
		x(\cdot,0)&=x_0(\cdot)  &\text{on} \quad \bar{\mathbb{B}}^n,
	\end{aligned}\right.
\end{equation}
where $f$ is the normal speed function. As in \cite[\S 2.4]{Weng-Xia2022}, the $\theta$-capillary boundary condition along the flow requires that the tangential component $\mathcal{T}=(\partial_tx)^\top\in T\Sigma_t$ must satisfy $\mathcal{T}\big|_{\partial\Sigma_t}=f\cot\theta\mu+\tilde{\mathcal{T}}$,  where $\tilde{T}\in T\left(\partial\Sigma_t\right)$. Therefore, up to a time-dependent diffeomorphism of $\partial\mathbb{B}^n$, we can assume that $\tilde{T}=0$ and hence
\begin{equation}\label{con-T}
	\mathcal{T}\big|_{\partial\Sigma_t}=f\cot\theta\mu.
\end{equation}
The $\theta$-capillary boundary condition also ensures that (see \cite[Prop. 2.12]{Weng-Xia2022})
\begin{equation}\label{de-boundary}
	\nabla_\mu f=\left(\frac{1}{\sin\theta}+\cot\theta h_{\mu\mu}\right)f\ \ \text{along}\ \partial\Sigma_t.
\end{equation}
Thus,  the flow \eqref{general-flow} can be equivalently rewritten in the following form:
\begin{equation}\label{s3:GL-flow}
	\left\{\begin{aligned}
		\partial_t x&=f\nu+\mathcal{T} \qquad &\text{in $\bar{\mathbb{B}}^n \times[0,T)$},\\
		\langle\bar{N}\circ x,\nu\rangle&=-\cos\theta &\text{on  $\partial\bar{\mathbb{B}}^n \times[0,T)$},\\
		x(\cdot,0)&=x_0(\cdot) &\text{on $\bar{\mathbb{B}}^n$}.
	\end{aligned}\right.
\end{equation}

\begin{prop}[\cite{Weng-Xia2022}]\label{Prop-general-evo}
	Along the general flow \eqref{s3:GL-flow}, it holds that
	\begin{align}
		&\partial_t{g_{ij}}=2f h_{ij}+\nabla_i \mathcal{T}_j+\nabla_j \mathcal{T}_i,\label{ev-metric}\\
		&\partial_t d\mu_t=(fH+\mathrm{div}(\mathcal{T}))d\mu_t,\label{ev-area}\\
		&\partial_t\nu=-\nabla f+g^{ij}h(e_i,\mathcal{T})e_j,\label{ev-normal}\\
		&\partial_t h_{ij}=-\nabla_i\nabla_j f+f(h^2)_{ij}+\nabla_\mathcal{T}{h_{ij}}+h_j^k\nabla_i{\mathcal{T}_k}+h_i^k\nabla_j{\mathcal{T}_k},\label{ev-sfn}\\
		&\partial_t h_i^j=-\nabla_i\nabla^j f-f(h^2)_i^j+\nabla_\mathcal{T} h_i^j,\label{ev-Wg}\\
		&\partial_t H=-\Delta f-|A|^2 f+\nabla_\mathcal{T}{H},\label{ev-H}\\
		&\partial_t V=-\dot{V}_j^i\nabla_i\nabla^j f-f \dot{V}_j^i(h^2)_i^j+\nabla_\mathcal{T}{V},\,\,\text{for}\,\, V=V(h_i^j),\,\,\text{where}\,\, \dot{V}_j^i=\frac{\partial V}{\partial h_i^j}. \label{ev-F}
	\end{align}
\end{prop}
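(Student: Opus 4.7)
The plan is to derive all seven identities directly by differentiating the defining relations for each geometric object, using only $\partial_t x=f\nu+\mathcal{T}$ together with the ambient Gauss--Weingarten formulas. A clean organizing principle is that if $V$ is any tensor field on $\Sigma_t$, its material derivative along the flow \eqref{s3:GL-flow} differs from the derivative along the purely normal flow $\partial_t x=f\nu$ by the Lie derivative $\mathcal{L}_{\mathcal{T}}V$, since the tangential component amounts to a $t$-dependent reparametrization of the immersion. This reduces the computation to the classical normal-flow formulas plus the bookkeeping of Lie-derivative corrections for each tensor type; the capillary boundary condition plays no role at this stage and enters later only through the prescription of $\mathcal{T}$ given in \eqref{con-T}.

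For $g_{ij}$, I would differentiate $g_{ij}=\langle\partial_i x,\partial_j x\rangle$ and expand $\partial_i(f\nu+\mathcal{T})$ via Gauss--Weingarten; the normal part contributes $2fh_{ij}$ and the tangential part assembles into the symmetric combination $\nabla_i\mathcal{T}_j+\nabla_j\mathcal{T}_i$, giving \eqref{ev-metric}. Tracing with $g^{ij}$ and using $\partial_t\sqrt{\det g}=\tfrac12 g^{ij}\partial_t g_{ij}\sqrt{\det g}$ then yields \eqref{ev-area}. For $\nu$, the constraints $|\nu|^2=1$ and $\langle\nu,\partial_i x\rangle=0$ force $\langle\partial_t\nu,\nu\rangle=0$ and $\langle\partial_t\nu,\partial_i x\rangle=-\langle\nu,\partial_i(f\nu+\mathcal{T})\rangle=-\nabla_i f+h(\partial_i,\mathcal{T})$, which produces \eqref{ev-normal}.

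The second fundamental form is the main computation. Starting from $h_{ij}=-\langle D_{\partial_i}\partial_j x,\nu\rangle$ and commuting $\partial_t$ with $\partial_i\partial_j$, one converts the resulting tangential and normal pieces using the Codazzi identity $\nabla_k h_{ij}=\nabla_i h_{kj}$ and the Gauss equation $R_{ijk\ell}=h_{ik}h_{j\ell}-h_{i\ell}h_{jk}$. After collecting terms, the residual $\mathcal{T}$-contribution reassembles into the Lie derivative $\mathcal{L}_{\mathcal{T}}h_{ij}=\nabla_{\mathcal{T}}h_{ij}+h_{kj}\nabla_i\mathcal{T}^k+h_{ik}\nabla_j\mathcal{T}^k$, which is \eqref{ev-sfn}. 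Raising one index through $h_i^j=g^{jk}h_{ik}$ and combining with $\partial_t g^{jk}=-g^{jp}g^{kq}\partial_t g_{pq}$, the symmetric $\nabla\mathcal{T}$ terms cancel algebraically and one is left with \eqref{ev-Wg}. Tracing over $i=j$ gives \eqref{ev-H}, and the chain rule $\partial_t V=\dot V_j^i\,\partial_t h_i^j$ together with $\dot V_j^i\nabla_{\mathcal{T}}h_i^j=\nabla_{\mathcal{T}}V$ produces \eqref{ev-F} for any smooth $V=V(h_i^j)$.

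The principal obstacle is the second fundamental form step: keeping track of how $\nabla\mathcal{T}$ enters through the Christoffel symbols and then reorganizing everything into a manifestly tensorial Lie-derivative form requires careful use of Codazzi and Gauss, plus the symmetrization identity for $\mathcal{L}_{\mathcal{T}}$ acting on a $(0,2)$-tensor. Once \eqref{ev-sfn} is in hand, the identities \eqref{ev-Wg}--\eqref{ev-F} follow by purely algebraic manipulation, and no boundary or curvature analysis is required.
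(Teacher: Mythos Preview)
Your approach is correct and is the standard derivation of these evolution equations. Note, however, that the paper does not supply its own proof of this proposition: it is quoted verbatim from \cite{Weng-Xia2022} as a cited result, so there is no in-paper argument to compare against. Your outline---computing the purely normal variation first and then recognizing the tangential contribution as the Lie derivative $\mathcal{L}_{\mathcal{T}}$ of the relevant tensor---is exactly the organizing principle behind the standard proof, and each of the seven steps you describe would go through without difficulty.
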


Using Proposition \ref{Prop-general-evo}, we obtain the following evolution equations along the flow \eqref{Inverse-flow}. For simplicity, we define the operator $\mathcal{L}$ as follows:
\begin{equation*}
	\mathcal{L}=\partial_t-\frac{1}{F^2}\dot{F}^{k\ell}\nabla_k\nabla_{\ell}-\langle \mathcal{T},\nabla \rangle.
\end{equation*}

\begin{prop}\label{Prop-evo-inverse}
	Along the inverse curvature flow \eqref{Inverse-flow}, the Weingarten matrix $h_i^j$, the curvature function $F$ and the mean curvature $H$ of $\Sigma_t$ evolve by
	\begin{align}
		(i)\mathcal{L}{h_i^j}&=\frac{1}{F^2}\ddot{F}^{k\ell,pq}\nabla_i{h_{k\ell}}\nabla^j{h_{pq}}-\frac{2}{F^3}\nabla_i{F}\nabla^j{F}+\frac{1}{F^2}\dot{F}^{k\ell}(h^2)_{k\ell}h_i^j-\frac{2}{F}(h^2)_i^j,\label{evo-hij}\\
		(ii)\mathcal{L}{F}&=-\frac{2}{F^3}\dot{F}^{ij}\nabla_i{F}\nabla_j{F}-\frac{1}{F}\dot{F}^{ij}(h^2)_{ij},\label{evo-F}\\
		(iii)\mathcal{L}{H}&=\frac{1}{F^2}\ddot{F}^{k\ell,pq}\nabla_i{h_{k\ell}}\nabla^i{h_{pq}}-\frac{2}{F^3}|\nabla F|^2+\frac{1}{F^2}\dot{F}^{k\ell}(h^2)_{k\ell}H-\frac{2}{F}|A|^2.\label{evo-H}
	\end{align}
Moreover, if $\Sigma_t$ is strictly convex, we assume that $\{b^{k\ell}\}$ is the inverse matrix of $\{h_{ij}\}$ and $\{b_m^n=b^{ns}g_{sm}\}$ is the inverse matrix of $\{h_i^j\}$. Then $b_m^n$ evolves by
\begin{align}
	(iv)\mathcal{L}{b_m^n}=&-\frac{1}{F^2}b_m^i b^{jn}\ddot{F}^{k\ell,pq}\nabla_i{h_{k\ell}}\nabla_j{h_{pq}}+\frac{2}{F^3}b_m^i b^{jn}\nabla_i F\nabla_j F\notag\\
	&-\frac{2}{F^2}b_m^pb^{in}b^{qs}\dot{F}^{k\ell}\nabla_k{h_{pq}}\nabla_{\ell}{h_{si}}-\frac{1}{F^2}\dot{F}^{k\ell}(h^2)_{k\ell}b_m^n+\frac{2}{F}\delta_m^n.\label{evo-b}
\end{align}
\end{prop}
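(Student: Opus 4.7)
The plan is to substitute $f=\tfrac{1}{F}$ into the general evolution equations of Proposition \ref{Prop-general-evo} and perform two systematic reductions throughout: first, converting derivatives of $1/F$ into derivatives of $F$ via $\nabla(1/F)=-F^{-2}\nabla F$ and $\nabla^2(1/F)=-F^{-2}\nabla^2F+2F^{-3}\nabla F\otimes\nabla F$; second, converting derivatives of $F$ into derivatives of the second fundamental form via the chain rule $\nabla_i F=\dot F^{k\ell}\nabla_i h_{k\ell}$ and $\nabla_i\nabla_j F=\dot F^{k\ell}\nabla_i\nabla_j h_{k\ell}+\ddot F^{k\ell,pq}\nabla_i h_{k\ell}\nabla_j h_{pq}$. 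Throughout the computation I repeatedly use the homogeneity identity $\dot F^{k\ell}h_{k\ell}=F$, the Codazzi equation $\nabla_ih_{jk}=\nabla_jh_{ik}$, and the fact that the ambient Euclidean space is flat so that the Gauss equation gives $R_{ijk\ell}=h_{ik}h_{j\ell}-h_{i\ell}h_{jk}$.

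For $(i)$, I would start from \eqref{ev-Wg}, expand $-\nabla_i\nabla^j(1/F)$ into the $\nabla^2F$ and $\nabla F\otimes\nabla F$ pieces, then commute covariant derivatives so that $\dot F^{k\ell}\nabla_i\nabla^j h_{k\ell}$ is replaced by $\dot F^{k\ell}\nabla_k\nabla^{\ell}h_i^j$. In Euclidean ambient space the Ricci commutator produces only the Gauss terms, and contracting with $\dot F^{k\ell}$ and using $\dot F^{k\ell}h_{k\ell}=F$ turns them into $\dot F^{k\ell}(h^2)_{k\ell}h_i^j$ and $-F(h^2)_i^j$, the latter combining with $-f(h^2)_i^j=-F^{-1}(h^2)_i^j$ to give the clean $-\tfrac{2}{F}(h^2)_i^j$ in \eqref{evo-hij}. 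For $(ii)$, the cleanest route is to apply \eqref{ev-F} with $V=F$ directly: the $\dot F^{k\ell}\nabla_k\nabla_{\ell}F$ piece coming from $-\dot F^i_j\nabla_i\nabla^j(1/F)$ is exactly absorbed into the definition of $\mathcal{L}$, and what remains is the stated quadratic-in-$\nabla F$ term together with $-F^{-1}\dot F^{ij}(h^2)_{ij}$.

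For $(iii)$, I would apply \eqref{ev-H}, expand $\Delta(1/F)$, and use Simons' identity in Euclidean space, $\Delta h_{k\ell}=\nabla_k\nabla_{\ell}H+H(h^2)_{k\ell}-|A|^2h_{k\ell}$, to rewrite $\dot F^{k\ell}\Delta h_{k\ell}$ as $\dot F^{k\ell}\nabla_k\nabla_{\ell}H+H\dot F^{k\ell}(h^2)_{k\ell}-|A|^2F$ (again using homogeneity). The $\nabla_k\nabla_{\ell}H$ term is absorbed into $\mathcal{L}$, the $-|A|^2F\cdot F^{-2}$ combines with the $-|A|^2/F$ coming from $-f|A|^2$ to produce $-\tfrac{2}{F}|A|^2$, and the $\ddot F$ and $|\nabla F|^2$ terms survive. (Equivalently, one can trace $(i)$.) For $(iv)$, I would differentiate the identity $b_m^n h_n^k=\delta_m^k$ in time and twice in space, yielding
\begin{align*}
\partial_t b_m^n &= -b_m^p b_q^n\,\partial_t h_p^q,\qquad \nabla_s b_m^n=-b_m^p b_q^n\nabla_s h_p^q,\\
\nabla_s\nabla_t b_m^n&=-b_m^p b_q^n\nabla_s\nabla_t h_p^q+2\,b_m^p b^{qa}b^{rn}g_{(\cdot)}\,\nabla_s h_{pq}\nabla_t h_{ar},
\end{align*}
and then plug $(i)$ into the formula for $\mathcal{L}b_m^n$; the two zeroth-order sources $\tfrac{1}{F^2}\dot F^{k\ell}(h^2)_{k\ell}h_i^j-\tfrac{2}{F}(h^2)_i^j$ convert, via $b\cdot h=\mathrm{id}$, into $-\tfrac{1}{F^2}\dot F^{k\ell}(h^2)_{k\ell}b_m^n+\tfrac{2}{F}\delta_m^n$, while the extra cubic-in-$\nabla h$ correction from commuting second derivatives of $b$ yields precisely the $-\tfrac{2}{F^2}b_m^p b^{in}b^{qs}\dot F^{k\ell}\nabla_k h_{pq}\nabla_{\ell}h_{si}$ term.

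The main bookkeeping obstacle will be in $(iv)$, where one must track the metric raising and lowering through the inverse identity and be careful that the second covariant derivative of $b$ contributes the cubic gradient term with the correct sign and combinatorial factor. The second subtle point is the Ricci commutation in $(i)$: one must verify that once the Codazzi identity is applied, the only surviving curvature contribution after contraction with $\dot F^{k\ell}$ is the Gauss term, and that homogeneity collapses it to the stated form. With these two commutation calculations in place, everything else is direct substitution.
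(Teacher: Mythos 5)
Your proposal is correct and matches the paper's proof in all essentials: substitute $f=\tfrac1F$ into Proposition \ref{Prop-general-evo}, use the chain rule for $F$, the Ricci/Simons commutation with the Gauss terms plus homogeneity $\dot F^{k\ell}h_{k\ell}=F$ for (i)--(iii), and differentiation of $b\cdot h=\mathrm{id}$ with the cubic gradient correction for (iv). The only cosmetic deviations (deriving (ii) from \eqref{ev-F} with $V=F$ rather than from $\dot F^i_j\partial_t h_i^j$, and offering Simons' identity for (iii) where the paper simply traces (i)) are equivalent computations.
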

\begin{proof}
	(i) Combining evolution equation \eqref{ev-Wg} with $f=\frac{1}{F}$,   we have
	\begin{align}
		\partial_t{h_i^j}&=-\nabla_i\nabla^j{\frac{1}{F}}-\frac{1}{F}(h^2)_i^j+\langle \mathcal{T},\nabla h_i^j\rangle\notag\\
		&=\frac{1}{F^2}\nabla_i\nabla^j{F}-\frac{2}{F^3}\nabla_i{F}\nabla^j{F}-\frac{1}{F}(h^2)_i^j+\langle \mathcal{T},\nabla h_i^j\rangle.\label{Ca-evo-1}
	\end{align}
By Ricci's identity, there holds
\begin{equation}\label{eq-Ric}
	\nabla_i\nabla_j h_{k\ell}=\nabla_{\ell}\nabla_{k} h_{ij}+h_{ij}(h^2)_{k\ell}-(h^2)_{ik}h_{j\ell}+(h^2)_{j\ell}h_{ki}-(h^2)_{ij}h_{k\ell}
\end{equation}
and hence
\begin{align}
	\nabla_i\nabla^j{F}&=\nabla_i(\dot{F}^{k\ell}\nabla^j{h_{k\ell}})\notag\\
	&=\ddot{F}^{k\ell,pq}\nabla_i{h_{pq}}\nabla^j{h_{k\ell}}+\dot{F}^{k\ell}\nabla_i\nabla^j{h_{k\ell}}\notag\\
	&=\ddot{F}^{k\ell,pq}\nabla_i{h_{k\ell}}\nabla^j{h_{pq}}+\dot{F}^{k\ell}\nabla_k\nabla_{\ell}h_i^j+\dot{F}^{k\ell}(h^2)_{k\ell}h_i^j-F(h^2)_i^j,\label{Ca-evo-2}
\end{align}
where we used \eqref{eq-Ric} and the fact that $\dot{F}^{k\ell}(h^2)_{ik}h_{\ell}^j=\dot{F}^{k\ell}(h^2)_{\ell}^jh_{ki}$. Combining \eqref{Ca-evo-1} and \eqref{Ca-evo-2} yields \eqref{evo-hij}.

(ii) Using \eqref{Ca-evo-1}, we calculate as follows:
\begin{align}
	\partial_t F&=\dot{F}_j^i\partial_t{h_i^j}\notag\\
	&=\frac{1}{F^2}\dot{F}^{ij}\nabla_i\nabla_j{F}-\frac{2}{F^3}\dot{F}^{ij}\nabla_i{F}\nabla_j{F}-\frac{1}{F}\dot{F}^{ij}(h^2)_{ij}+\langle \mathcal{T},\nabla F\rangle,
\end{align}
which gives \eqref{evo-F}.

(iii) \eqref{evo-H} follows from taking trace of \eqref{evo-hij} directly.

(iv) Note that 
\begin{align}
	\nabla_{\ell}{b_m^n}&=-b_i^n b_m^s\nabla_{\ell} h_s^i,\label{nabla-b}\\
	\nabla_k\nabla_{\ell}{b_m^n}&=-b_m^i b_j^n\nabla_k\nabla_{\ell}{h_i^j}+b_i^n b_m^p b_q^s\nabla_k{h_p^q}\nabla_{\ell}{h_s^i}+b_i^n b_m^p b_q^s\nabla_k{h_s^i}\nabla_{\ell}{h_p^q}\notag\\
	&=-b_m^i b_j^n\nabla_k\nabla_{\ell}{h_i^j}+b_m^p b^{in} b^{qs}\nabla_k{h_{pq}}\nabla_{\ell}{h_{si}}+b_m^p b^{in} b^{qs}\nabla_k{h_{si}}\nabla_{\ell}{h_{pq}}.\label{hessian-b}
\end{align}
Combining \eqref{evo-hij} with \eqref{nabla-b}, \eqref{hessian-b} gives
\begin{align*}
	\partial_t b_m^n=&-b_m^i b_j^n\partial_t{h_i^j}\\
	=&-\frac{1}{F^2}b_m^i b_j^n\dot{F}^{k\ell}\nabla_k\nabla_{\ell}{h_i^j}-b_m^i b_j^n\langle T,\nabla h_i^j \rangle-\frac{1}{F^2}b_m^i b^{jn}\ddot{F}^{k\ell,pq}\nabla_i{h_{k\ell}}\nabla_j{h_{pq}}\\
	&+\frac{2}{F^3}b_m^i b^{jn}\nabla_i F\nabla_j F-\frac{1}{F^2}\dot{F}^{k\ell}(h^2)_{k\ell}b_m^n+\frac{2}{F}\delta_m^n\\
	=&\frac{1}{F^2}\dot{F}^{k\ell}\nabla_k\nabla_{\ell}{b_m^n}+\langle T,\nabla{b_m^n} \rangle-\frac{1}{F^2}b_m^i b^{jn}\ddot{F}^{k\ell,pq}\nabla_i{h_{k\ell}}\nabla_j{h_{pq}}\\
	&-\frac{2}{F^2}b_m^p b^{in} b^{qs} \dot{F}^{k\ell}\nabla_k{h_{pq}}\nabla_{\ell}{h_{si}}+\frac{2}{F^3}b_m^i b^{jn}\nabla_i F\nabla_j F\\
	&-\frac{1}{F^2}\dot{F}^{k\ell}(h^2)_{k\ell}b_m^n+\frac{2}{F}\delta_m^n.
\end{align*}
This is \eqref{evo-b}.
\end{proof}
\section{Curvature estimates}\label{Sec-cur}
Assume that $\Sigma\subset\bar{\mathbb{B}}^{n+1} (n\geq 2)$ is a properly embedded, strictly convex smooth hypersurface with capillary boundary supported on $\mathbb{S}^n$ at a contact angle $\theta\in(0,\frac{\pi}{2}]$. Let $T^{*}>0$ be the maximal time such that the solution of the flow \eqref{Inverse-flow} starting from $\Sigma$ remains smooth on $[0,T^{*})$. Moreover, let $\bar{T}>0$ be the maximal time such that the solution remains both smooth and strictly convex for all $0\leq t<\bar{T}$. By definition, it follows that $\bar{T}\leq T^{*}\leq +\infty$. In this section, we prove that $\bar{T}=T^{*}<\infty$.

\begin{lem}\label{upper-F-H}
	Assume that $\Sigma\subset\bar{\mathbb{B}}^{n+1} (n\geq 2)$ be a properly embedded, strictly convex smooth hypersurface with capillary boundary supported on $\mathbb{S}^n$ at a contact angle $\theta\in(0,\frac{\pi}{2}]$. Let $\Sigma_t,t\in[0,\bar{T})$ be the solution of the flow \eqref{Inverse-flow} starting from $\Sigma$. Then on the time interval $[0,\bar{T})$, we have
	\begin{equation}
		\max_{\bar{\mathbb{B}}^n} F(\cdot,t)\leq \max_{\bar{\mathbb{B}}^n} F(\cdot,0)\,\qquad \max_{\bar{\mathbb{B}}^n} H(\cdot,t)\leq \max_{\bar{\mathbb{B}}^n} H(\cdot,0).
	\end{equation}
\end{lem}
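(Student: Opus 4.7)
I would run a parabolic maximum principle on the evolution equations \eqref{evo-F} and \eqref{evo-H} from Proposition~\ref{Prop-evo-inverse}, ruling out boundary maxima via the capillary data. For $\max F$: the right-hand side of \eqref{evo-F} is
\[
-\frac{2}{F^3}\dot F^{ij}\nabla_iF\,\nabla_jF - \frac{1}{F}\dot F^{ij}(h^2)_{ij},
\]
which is non-positive since $\dot F^{ij}$ is positive definite by Assumption~\ref{assum}(iii) and $(h^2)_{ij}$ is positive semi-definite. At an interior spatial maximum this forces $\partial_t F\leq 0$. To rule out a boundary maximum, I substitute $f=1/F$ into \eqref{de-boundary} and get
\[
\nabla_\mu F \;=\; -F\!\left(\tfrac{1}{\sin\theta}+\cot\theta\,h_{\mu\mu}\right) \quad\text{on } \partial\Sigma_t.
\]
Since $\theta\in(0,\pi/2]$ gives $\cot\theta\geq 0$ and strict convexity forces $h_{\mu\mu}>0$, this is strictly negative, contradicting the Hopf-lemma inequality at any putative boundary maximum. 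Hence the spatial maximum is attained in the interior and $\max F(\cdot,t)$ is non-increasing.

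For $\max H$, I would work with \eqref{evo-H}. At an interior maximum $\nabla H=0$ and $\dot F^{k\ell}\nabla_k\nabla_\ell H\leq 0$; concavity of $F$ makes the $\ddot F$-term non-positive, and $-2|\nabla F|^2/F^3\leq 0$ trivially. It remains to show the zeroth-order quantity $\dot F^{k\ell}(h^2)_{k\ell}H - 2F|A|^2$ is $\leq 0$. In a principal frame this equals
\[
\sum_{i,j} F_i\kappa_i\kappa_j(\kappa_i-2\kappa_j) = -\sum_i F_i\kappa_i^3 + \sum_{i<j}\kappa_i\kappa_j\bigl[(F_i-F_j)(\kappa_i-\kappa_j) - (F_i\kappa_j+F_j\kappa_i)\bigr],
\]
and Lemma~\ref{lem-concave} forces $(F_i-F_j)(\kappa_i-\kappa_j)\leq 0$, so every bracket on the right, and the diagonal sum, are non-positive. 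This gives $\mathcal{L} H\leq 0$ at any interior maximum and hence $\partial_t H\leq 0$ there.

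The step I expect to be the main obstacle is the boundary maximum for $H$. Here the relevant quantity is $\nabla_\mu H = \nabla_\mu h_{\mu\mu}+\sum_\alpha\nabla_\mu h_{\alpha\alpha}$, and in a frame diagonalising $h|_{\partial\Sigma}$ Proposition~\ref{boundary-h-property}(4) yields
\[
\sum_\alpha \nabla_\mu h_{\alpha\alpha} = h_{\mu\mu}\sum_\alpha \tilde\kappa_\alpha - \sum_\alpha \tilde\kappa_\alpha\kappa_\alpha, \qquad \tilde\kappa_\alpha=\cot\theta\,\kappa_\alpha+\tfrac{1}{\sin\theta}>0,
\]
but Proposition~\ref{boundary-h-property} does not directly control $\nabla_\mu h_{\mu\mu}$. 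The difficulty is to package these boundary derivatives into an oblique inequality of the form $\nabla_\mu H\leq c(x)H$ with the correct Hopf sign; I expect this to require either an auxiliary test function combining $H$ with $h_{\mu\mu}$, or, in the free-boundary case $\theta=\pi/2$, a reflection of $\Sigma_t$ across $\mathbb S^n$ that produces a closed hypersurface on which the interior argument of the previous paragraph applies verbatim. Combining this with the interior step yields $\max H(\cdot,t)\leq\max H(\cdot,0)$.
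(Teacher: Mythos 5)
Your treatment of $F$ (interior sign of \eqref{evo-F} plus the boundary exclusion via \eqref{de-boundary} with $f=1/F$) coincides with the paper's, and your interior argument for $H$ is also correct: your pairwise decomposition of $\dot F^{k\ell}(h^2)_{k\ell}H-2F|A|^2$ is a slight variant of the paper's identity $\dot F^{k\ell}(h^2)_{k\ell}H-F|A|^2=\sum_{i<j}\kappa_i\kappa_j(\kappa_i-\kappa_j)(F_i-F_j)\leq 0$, and both reduce to Lemma \ref{lem-concave}. The genuine gap is exactly the step you flag and leave open: excluding a boundary maximum of $H$. Neither of your proposed workarounds closes it. An ``auxiliary test function combining $H$ with $h_{\mu\mu}$'' is not specified, and the reflection across $\mathbb{S}^n$ is only a conformal inversion, not an isometry, so the doubled hypersurface is in general not $C^2$, its second fundamental form and speed are not those of $\Sigma_t$, and in any case this would only address $\theta=\pi/2$ while the lemma is claimed for all $\theta\in(0,\frac{\pi}{2}]$.

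The missing idea in the paper is that the Hopf sign for $H$ at $\partial\Sigma_t$ comes for free from the boundary condition on $F$ that you have already used. At a boundary point choose the frame $\{\mu,e_\alpha\}$ diagonalizing $h$ (possible since $\mu$ is a principal direction by Proposition \ref{boundary-h-property}(1)); then $\dot F$ is simultaneously diagonal and
\begin{equation*}
\nabla_\mu F=\dot F^{\mu\mu}\nabla_\mu h_{\mu\mu}+\sum_{\alpha}\dot F^{\alpha\alpha}\nabla_\mu h_{\alpha\alpha}
=-\Bigl(\tfrac{1}{\sin\theta}+\cot\theta\, h_{\mu\mu}\Bigr)F,
\end{equation*}
so the quantity you could not control, $\nabla_\mu h_{\mu\mu}$, is \emph{solved for} in terms of the tangential derivatives $\nabla_\mu h_{\alpha\alpha}=(h_{\mu\mu}-h_{\alpha\alpha})\tilde h_{\alpha\alpha}$ given by Proposition \ref{boundary-h-property}(4). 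Substituting into $\nabla_\mu H=\nabla_\mu h_{\mu\mu}+\sum_\alpha\nabla_\mu h_{\alpha\alpha}$ gives
\begin{equation*}
\nabla_\mu H=-\Bigl(\tfrac{1}{\sin\theta}+\cot\theta\, h_{\mu\mu}\Bigr)\frac{F}{\dot F^{\mu\mu}}
+\sum_{\alpha}\frac{\dot F^{\mu\mu}-\dot F^{\alpha\alpha}}{\dot F^{\mu\mu}}\,(h_{\mu\mu}-h_{\alpha\alpha})\,\tilde h_{\alpha\alpha}<0,
\end{equation*}
since $(\dot F^{\mu\mu}-\dot F^{\alpha\alpha})(h_{\mu\mu}-h_{\alpha\alpha})\leq 0$ by Lemma \ref{lem-concave} and $\tilde h_{\alpha\alpha}=\cot\theta\, h_{\alpha\alpha}+\frac{1}{\sin\theta}>0$ for $\theta\in(0,\frac{\pi}{2}]$ and $\Sigma_t$ strictly convex. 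This forces the maximum of $H$ into the interior, where your computation applies, and completes the proof without any auxiliary function or reflection. You should incorporate this boundary identity; as written, your argument establishes the bound for $F$ but not for $H$.
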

\begin{proof}
	Since $\Sigma_t$ is a strictly convex solution on $[0,\bar{T})$, by \eqref{de-boundary}, we see that along $\partial\Sigma_t$,
	\begin{equation}\label{de-F}
		\nabla_{\mu} F=-(\frac{1}{\sin\theta}+\cot\theta h_{\mu\mu})F<0, \ \text{for}\ t\in(0,\bar{T}). 
	\end{equation}
Then the maximum of $F$ on $\Sigma_t$ is attained on the interior point. By \eqref{evo-F}, we have
\begin{equation}
	\partial_t F_{\max}(t)\leq 0,
\end{equation}
which means $\max_{\bar{\mathbb{B}}^n} F(\cdot,t)\leq \max_{\bar{\mathbb{B}}^n} F(\cdot,0)$.

Moreover, let $\{e_{\alpha}\}_{\alpha=1}^{n-1}$ be an orthonormal frame of $T(\partial\Sigma_t)$ such that $\{\mu,e_1,\cdots,e_{n-1}\}$ forms an orthonormal frame of $T(\Sigma_t)$. Then combining with \eqref{de-F}, we obtain
\begin{equation*}
	-(\frac{1}{\sin\theta}+\cot\theta h_{\mu\mu})F=\nabla_{\mu}F=\dot{F}^{\mu\mu}\nabla_{\mu}h_{\mu\mu}+\sum_{\alpha=1}^{n-1}\dot{F}^{\alpha\alpha}\nabla_{\mu}{h_{\alpha\alpha}},
\end{equation*}
which yields
\begin{equation}\label{nablahmumu}
	\nabla_{\mu}{h_{\mu\mu}}=-(\frac{1}{\sin\theta}+\cot\theta h_{\mu\mu})\frac{F}{\dot{F}^{\mu\mu}}-\sum_{\alpha=1}^{n-1}\frac{\dot{F}^{\alpha\alpha}}{\dot{F}^{\mu\mu}}\nabla_{\mu}{h_{\alpha\alpha}}.
\end{equation}
Hence
\begin{align}
	\nabla_{\mu}H=&\sum_{\alpha=1}^{n-1}\nabla_{\mu}{h_{\alpha\alpha}}+\nabla_{\mu}{h_{\mu\mu}}\notag\\
	=&-(\frac{1}{\sin\theta}+\cot\theta h_{\mu\mu})\frac{F}{\dot{F}^{\mu\mu}}+\sum_{\alpha=1}^{n-1}\frac{1}{\dot{F}^{\mu\mu}}(\dot{F}^{\mu\mu}-\dot{F}^{\alpha\alpha})\nabla_{\mu}h_{\alpha\alpha}\notag\\
	=&-(\frac{1}{\sin\theta}+\cot\theta h_{\mu\mu})\frac{F}{\dot{F}^{\mu\mu}}+\sum_{\alpha=1}^{n-1}\frac{1}{\dot{F}^{\mu\mu}}(\dot{F}^{\mu\mu}-\dot{F}^{\alpha\alpha})(h_{\mu\mu}-h_{\alpha\alpha})\tilde{h}_{\alpha\alpha},
\end{align}
where in the last equality we used the property $(4)$ in Proposition \ref{boundary-h-property}. Since $F$ is a concave function,  by Lemma \ref{lem-concave} we have $(\dot{F}^{\mu\mu}-\dot{F}^{\alpha\alpha})(h_{\mu\mu}-h_{\alpha\alpha})\leq 0$, which yields
\begin{equation}
	\nabla_{\mu} H<0 \quad \text{along}\,\,\,\, \partial\Sigma_t.
\end{equation}
Thus, the maximum of $H$ on $\Sigma_t$ is attained at the interior point. By the evolution equation \eqref{evo-H} of $H$,  at the maximum point of $H$, we obtain
\begin{equation}\label{p-Hmax}
	\partial_t H_{\max}(t)\leq \frac{1}{F^2}\dot{F}^{k\ell}(h^2)_{k\ell}H-\frac{2}{F}|A|^2.
\end{equation}
Since
\begin{align}
	\dot{F}^{k\ell}(h^2)_{k\ell}H-F|A|^2\notag
	=&\sum_{i,j=1}^n(\frac{\partial F}{\partial\kappa_i}\kappa_i^2\kappa_j-\frac{\partial F}{\partial\kappa_i}\kappa_i\kappa_j^2)\notag\\
	=&\sum_{1\leq i<j\leq n}\kappa_i\kappa_j(\kappa_i-\kappa_j)(\frac{\partial F}{\partial\kappa_i}-\frac{\partial F}{\partial\kappa_j})\notag\\
	\leq &0,\label{Ca-con}
\end{align}
where in the last inequality we used Lemma \ref{lem-concave} again. Combining \eqref{p-Hmax} with \eqref{Ca-con}, we have
\begin{equation}
	\partial_t H_{\max}(t)\leq -\frac{1}{F}|A|^2<0,
\end{equation}
which yields $\max_{\bar{\mathbb{B}}^n} H(\cdot,t)\leq \max_{\bar{\mathbb{B}}^n} H(\cdot,0)$.
\end{proof}
According to Lemma \ref{upper-F-H}, the principal curvatures of $\Sigma_t$ on the time  interval $[0,\bar{T})$ satisfies 
\begin{equation}\label{uper-H}
	0<\kappa_i<H(\cdot,t)\leq \max_{\bar{\mathbb{B}}^n} H(\cdot,0).
\end{equation}
Let $y(\cdot,t):\partial\mathbb{B}^n\to\mathbb{S}^n$ be the induced embedding of $x(\cdot,t):\bar{\mathbb{B}}^{n}\to\bar{\mathbb{B}}^{n+1}$. By property $(2)$ in Proposition \ref{boundary-h-property}, it follows that $\partial\Sigma_t$ is a strictly convex hypersurface in $\mathbb{S}^n$ for all time $t\in[0,\bar{T})$. Combining this with \eqref{uper-H}, we  obtain uniform $C^2$ estimates of $\partial\Sigma_t\subset\mathbb{S}^n$. Then there exists a $C^{1,\alpha}$ limiting hypersurface $\partial\Sigma_{\bar{T}}$, such that $\partial\Sigma_t$ converges to $\partial\Sigma_{\bar{T}}$ in the norm of $C^{1,\alpha}$. Moreover, we have the following rigidity result.

\begin{lem}\label{lem-rigidity}
	There exists a vector $e\in \mathrm{int}(\widehat{\partial\Sigma_{\bar{T}}})$, such that $\partial\Sigma_{\bar{T}}$ either is $\partial C_{\theta,\infty}(e)$, or $\partial\Sigma_{\bar{T}}\subset\mathrm{int}(\widehat{\partial C_{\theta,\infty}}(e))$.
\end{lem}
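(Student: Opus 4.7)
The plan is to apply \cite[Proposition 2.16]{Weng-Xia2022} along the flow and pass to the limit as $t\to\bar T$. For each $t<\bar T$, the hypersurface $\Sigma_t$ is strictly convex with $\theta$-capillary boundary, so that proposition produces a unit vector $e_t\in\mathrm{int}(\widehat{\partial\Sigma_t})$ satisfying $\widehat{\partial\Sigma_t}\subset\mathrm{int}(\widehat{\partial C_{\theta,\infty}(e_t)})$, i.e.\ $\langle y,e_t\rangle>\cos\theta$ for every $y\in\widehat{\partial\Sigma_t}$. The uniform bound \eqref{uper-H} combined with Proposition \ref{boundary-h-property}(2) gives uniform $C^{1,1}$ control on $\partial\Sigma_t$ as a hypersurface in $\mathbb{S}^n$, so by Arzelà--Ascoli I may pass to a subsequence $t_k\to\bar T$ along which $\partial\Sigma_{t_k}\to\partial\Sigma_{\bar T}$ in $C^{1,\alpha}$ and $e_{t_k}\to e\in\mathbb{S}^n$. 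The enclosed convex bodies $\widehat{\partial\Sigma_{t_k}}$ converge in Hausdorff distance to $\widehat{\partial\Sigma_{\bar T}}$, so passing to the limit in the inclusion above yields $e\in\widehat{\partial\Sigma_{\bar T}}$ and
\[
\widehat{\partial\Sigma_{\bar T}}\subseteq\widehat{\partial C_{\theta,\infty}(e)},\qquad\text{i.e.\ }\langle y,e\rangle\geq\cos\theta\ \text{ for all }y\in\widehat{\partial\Sigma_{\bar T}}.
\]

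To upgrade $e$ to an interior point of $\widehat{\partial\Sigma_{\bar T}}$, I would use the tangential boundary condition \eqref{con-T} together with the second identity in \eqref{normal transform} to compute that on $\partial\Sigma_t$ the position vector evolves by $\partial_t x=\frac{1}{F\sin\theta}\bar\nu$; since $\bar\nu$ is the outward conormal in $\mathbb{S}^n$, this shows that $\widehat{\partial\Sigma_t}$ is strictly monotone non-decreasing in $t$, and hence $\mathrm{int}(\widehat{\partial\Sigma_0})\subseteq\mathrm{int}(\widehat{\partial\Sigma_{\bar T}})$. Combining this monotonicity with the quantitative content of the Weng--Xia construction (which, after possibly replacing $e_t$ by the center of an inscribed $\mathbb{S}^n$-ball of uniformly positive radius, keeps the $e_t$'s in a fixed compact subset of $\mathrm{int}(\widehat{\partial\Sigma_t})$), the limit $e$ lies in $\mathrm{int}(\widehat{\partial\Sigma_{\bar T}})$.

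The dichotomy then arises from the height function $u(y)=\langle y,e\rangle-\cos\theta$ on $\partial\Sigma_{\bar T}$, which is nonnegative by the previous step. If $u>0$ everywhere, then $\partial\Sigma_{\bar T}\subset\mathrm{int}(\widehat{\partial C_{\theta,\infty}(e)})$ and we are in the second case. Otherwise, $u$ attains the value $0$ at some interior touching point $p\in\partial\Sigma_{\bar T}\cap\partial C_{\theta,\infty}(e)$, where $\partial\Sigma_{\bar T}$ is tangent from the inside to the umbilical sphere $\partial C_{\theta,\infty}(e)$. A strong maximum principle, comparing a $C^{1,1}$ weakly convex hypersurface with the totally umbilic model $\partial C_{\theta,\infty}(e)$ of constant principal curvatures $\cot\theta$, then forces $\partial\Sigma_{\bar T}=\partial C_{\theta,\infty}(e)$.

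The main obstacle lies in this last comparison step: because the limit is only $C^{1,1}$ and only weakly convex, the strong maximum principle must be invoked in a low-regularity, geometric form rather than via a classical smooth calculation on the second fundamental form. A secondary but more subtle point is the interiority of $e$, which requires the quantitative structure behind \cite[Proposition 2.16]{Weng-Xia2022} in order to prevent the $e_{t_k}$'s from accumulating on $\partial\Sigma_{\bar T}$ as $t_k\to\bar T$.
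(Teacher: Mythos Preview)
Your approach is in the same spirit as the paper's: both rely on the approximation $\partial\Sigma_t\to\partial\Sigma_{\bar T}$ in $C^{1,\alpha}$. The paper, however, does not pass to the limit in the \emph{conclusion} of \cite[Proposition 2.16]{Weng-Xia2022}; instead it reruns the \emph{proof} of \cite[Lemma A.1]{HWYZ-2023} directly on $\partial\Sigma_{\bar T}$, inserting the approximants only in Step~1 of that argument (where smooth strict convexity is used). The remaining steps of that lemma---the construction of $e$ as an interior point and the touching/rigidity dichotomy---are then carried out for $\partial\Sigma_{\bar T}$ itself, so no loss of strictness and no separate interiority argument are needed.

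Your route, by contrast, applies the black-box conclusion at each $t$ and only then takes a limit, which forces you to recover two things that degenerate under limits: (i) the interiority $e\in\mathrm{int}(\widehat{\partial\Sigma_{\bar T}})$, and (ii) the strict alternative. Your sketch for (i) is not yet a proof: the monotonicity $\widehat{\partial\Sigma_0}\subset\widehat{\partial\Sigma_t}$ is correct, but it does not by itself keep the $e_{t_k}$ in a fixed compact subset of the interior, and invoking an unstated ``quantitative content'' of the Weng--Xia construction is exactly the missing ingredient. For (ii), your strong maximum principle idea is sound in outline---the limit $\partial\Sigma_{\bar T}$ has principal curvatures $\geq\cot\theta$ in $\mathbb{S}^n$ (from Proposition~\ref{boundary-h-property}(2) and the uniform bound \eqref{uper-H}), touching the umbilic sphere of curvature $\cot\theta$ from inside---but carrying this out at $C^{1,1}$ regularity requires a viscosity or Aleksandrov-type comparison that you have not supplied. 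Neither gap is fatal, but filling them essentially reproduces the content of \cite[Lemma A.1]{HWYZ-2023}, which is why the paper simply cites that argument and indicates where the approximation enters.
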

\begin{proof}
	The proof is essentially the same as that of \cite[Lemma A.1]{HWYZ-2023}, except that in Step 1 of  \cite[Lemma A.1]{HWYZ-2023}, an additional approximation argument is required using $\partial\Sigma_t$ as $t\to\bar{T}$. Therefore, we omit the details here.
\end{proof}
In the following, we focus on showing that $\bar{T}=T^{*}$. Firstly, we need a further estimate on the curvature function $F$.

\begin{lem}\label{lowbound-F}
	Assume that $\Sigma\subset\bar{\mathbb{B}}^{n+1} (n\geq 2)$ is a properly embedded, strictly convex smooth hypersurface with capillary boundary supported on $\mathbb{S}^n$ at a contact angle $\theta\in(0,\frac{\pi}{2}]$. Let $\Sigma_t,t\in[0,\bar{T})$ be the solution of the flow \eqref{Inverse-flow} starting from $\Sigma$. Assume that $\partial\Sigma_{\bar{T}}$ is not a flat ball around some point $e\in\mathbb{S}^n$, then there holds
	\begin{equation}\label{lower-F}
		\max_{[0,\bar{T})\times\mathbb{B}^n}{\frac{1}{F}}\leq c,
	\end{equation}
where $c>0$ is a constant depending on $\Sigma$ and the distance of $\partial\Sigma_{\bar{T}}$ to a suitable geodesic slice $\partial C_{\theta,\infty}(e)$.
\end{lem}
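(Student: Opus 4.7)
Since the limiting boundary $\partial\Sigma_{\bar T}$ is by hypothesis not a flat ball, Lemma~\ref{lem-rigidity} supplies a direction $e\in\mathrm{int}(\widehat{\partial\Sigma_{\bar T}})$ for which the strict containment $\partial\Sigma_{\bar T}\subset\mathrm{int}(\widehat{\partial C_{\theta,\infty}(e)})$ holds. The uniform upper bound on $H$ from Lemma~\ref{upper-F-H}, together with the $C^{1,\alpha}$-convergence $\partial\Sigma_t\to\partial\Sigma_{\bar T}$ derived earlier in this section, propagates this strict containment uniformly to all $t\in[0,\bar T)$ with a positive lower bound $\mathrm{dist}(\partial\Sigma_t,\partial C_{\theta,\infty}(e))\geq\varepsilon_0>0$. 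Feeding these into Lemma~\ref{Lem-es-height} yields
$$u(x,t):=\langle x,e\rangle-\cos\theta\geq\delta_0>0\qquad\text{on } \Sigma_t\times[0,\bar T),$$
with $\delta_0$ depending only on $\max H$, $\varepsilon_0$, and $\mathrm{dist}(e,\partial\Sigma)$. This uniform positivity of the height function is the single geometric input to be traded against the possible blow-up of $1/F$.

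With this in hand, the plan is to apply the parabolic maximum principle to an auxiliary quantity of the form
$$W=\log\tfrac{1}{F}+\Lambda\,\eta(u),$$
where $\Lambda>0$ is a large constant and $\eta$ is a strictly decreasing smooth profile (for instance $\eta(u)=(u-\tfrac{\delta_0}{2})^{-1}$ or $\eta(u)=-\log u$). A direct computation from Proposition~\ref{Prop-evo-inverse}(ii) gives
$$\mathcal{L}(-\log F)=\frac{1}{F^4}\dot F^{ij}\nabla_i F\,\nabla_j F+\frac{1}{F^2}\dot F^{ij}(h^2)_{ij}\geq 0,$$
while the Gauss formula combined with the Euler identity $\dot F^{ij}h_{ij}=F$ yields $\mathcal{L}u=\tfrac{2\langle\nu,e\rangle}{F}$. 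At an interior spatial maximum of $W$, the first-order condition $\nabla W=0$ replaces $\nabla F/F$ by $-\Lambda\eta'(u)\nabla u$, collapsing the gradient-squared contributions into a single $|\nabla u|^2_{\dot F}$ term. Coupling this with the strict convexity bound $\langle\nu,e\rangle\leq-c_0<0$ (cf.\ \cite[Proposition~2.16]{Weng-Xia2022}) and the lower bound $u\geq\delta_0$, the zeroth-order contribution $\Lambda\eta'(u)\tfrac{2\langle\nu,e\rangle}{F}$ becomes the dominant negative term whenever $1/F$ is large, forcing $\mathcal{L}W<0$ there and contradicting the maximum principle.

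The boundary step invokes Hopf's lemma. From \eqref{de-boundary} applied to $f=1/F$ one has $\nabla_\mu\log\tfrac{1}{F}=\tfrac{1}{\sin\theta}+\cot\theta\,h_{\mu\mu}$, bounded above by the a priori control on $H$; meanwhile \eqref{normal transform-II} yields $\nabla_\mu u=\langle\mu,e\rangle=\sin\theta\langle x,e\rangle+\cos\theta\langle\bar{\nu},e\rangle$. Enlarging $\Lambda$ so that $\Lambda|\eta'(u)|\nabla_\mu u$ dominates $\nabla_\mu\log\tfrac{1}{F}$ yields $\nabla_\mu W<0$ uniformly along $\partial\Sigma_t$, so Hopf's boundary-point lemma forbids a boundary maximum of $W$; the only remaining possibility is the initial slice, which gives $\max W\leq\max W(\cdot,0)$ and hence the asserted upper bound on $1/F$. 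The main obstacle is precisely this joint calibration of $\eta$ and $\Lambda$: the sign of $\nabla_\mu u$ at the boundary is not a priori definite, since the summand $\sin\theta\langle x,e\rangle>0$ competes with $\cos\theta\langle\bar{\nu},e\rangle<0$ (the latter arising because $e$ lies in the interior of $\widehat{\partial\Sigma_t}$ and $\bar{\nu}$ points outward in $\mathbb{S}^n$). Securing $\nabla_\mu u\geq\kappa>0$ uniformly may require a further spatial correction to $W$, for instance by a multiple of $|x-\cos\theta\,e|^2$, mirroring the device used in the analogous argument of \cite{HWYZ-2023} referenced in the proof of Lemma~\ref{lem-rigidity}.
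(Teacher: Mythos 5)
Your setup paragraph is fine and matches the paper: Lemma \ref{lem-rigidity} plus Lemma \ref{Lem-es-height} do give the uniform height bound $u=\langle x,e\rangle-\cos\theta\geq\delta_0>0$ on $[0,\bar T)$, and your formulas $\mathcal{L}(-\log F)=\frac{1}{F^4}\dot F^{ij}\nabla_iF\nabla_jF+\frac{1}{F^2}\dot F^{ij}(h^2)_{ij}$ and $\mathcal{L}u=\frac{2\langle\nu,e\rangle}{F}$ are correct. The gap is in the interior maximum principle, and it is fatal: since you take $\eta$ \emph{decreasing} ($\eta'<0$) and the convexity bound you invoke gives $\langle\nu,e\rangle<0$, the zeroth-order term $\Lambda\eta'(u)\,\frac{2\langle\nu,e\rangle}{F}$ is a product of two negative factors and is therefore \emph{positive}; it cannot be "the dominant negative term". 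Concretely, at an interior maximum of $W=\log\frac1F+\Lambda\eta(u)$, using $\nabla W=0$ one gets
\[
\mathcal{L}W=\frac{\Lambda^2\eta'(u)^2}{F^2}\,\dot F^{ij}\nabla_iu\nabla_ju+\frac{1}{F^2}\dot F^{ij}(h^2)_{ij}+\frac{2\Lambda\eta'(u)\langle\nu,e\rangle}{F}-\frac{\Lambda\eta''(u)}{F^2}\,\dot F^{ij}\nabla_iu\nabla_ju,
\]
in which the first three terms are nonnegative and the only possibly negative one is the last. That term is weighted by $\dot F^{ij}\nabla_iu\nabla_ju$, which admits no positive lower bound: $\nabla u$ vanishes wherever $\nu$ is parallel to $e$ (such points exist on a convex cap), and in any case the ellipticity of $\dot F^{ij}$ is exactly what is not yet known at this stage (Lemma \ref{uniform elliptic} comes \emph{after} and uses this lemma). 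Hence $\mathcal{L}W\geq0$ is perfectly consistent at points where $1/F$ is arbitrarily large, and no calibration of $\Lambda$ and $\eta$ yields the claimed contradiction. Flipping to an increasing $\eta$ fixes the sign of the zeroth-order term but ruins the boundary step: then both $\nabla_\mu\log\frac1F=\frac{1}{\sin\theta}+\cot\theta\,h_{\mu\mu}>0$ (from \eqref{de-boundary}) and $\Lambda\eta'(u)\nabla_\mu u>0$, so boundary maxima can no longer be excluded. This tension is precisely why a purely height-corrected $\log\frac1F$ cannot close.

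Incidentally, the point you single out as the main obstacle, the sign of $\nabla_\mu u$, is actually the harmless part: since the expanding flow keeps $\widehat{\partial\Sigma_t}\subset\bar B_{r_0}(e)$ with $r_0<\theta$ (as exploited in Lemma \ref{Lem-pconvex}), one has $\nabla_\mu u=\sin\theta\langle x,e\rangle+\cos\theta\langle\bar\nu,e\rangle\geq\sin(\theta-r_0)>0$, and $h_{\mu\mu}\leq H\leq C$ by Lemma \ref{upper-F-H}, so for decreasing $\eta$ and $\Lambda$ large the Hopf step would go through. The paper's proof avoids the interior problem by a structurally different test function, $\xi=G(\chi)\big/\bigl[F(1+\cos\theta\langle x,\nu\rangle)\bigr]$ with $\chi=\frac12|x|^2+\frac{\beta-1}{2}(\langle x,e\rangle^2-\cos^2\theta)-\lambda(\langle x,e\rangle-\cos\theta)+1$: the factor $(1+\cos\theta\langle x,\nu\rangle)^{-1}$ cancels the $\cot\theta\,h_{\mu\mu}$ contribution in the boundary derivative and the $\lambda$-term makes $\nabla_\mu\xi<0$, while the $\frac12|x|^2$ part of $\chi$ generates, via the trace inequality \eqref{sum-F} ($\dot F^{ij}g_{ij}\geq1$), a good negative term of order $\xi^2/F\sim\xi^3$ that does not degenerate where $\nabla u=0$ and dominates all remaining terms, which are at most quadratic in $\xi$. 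Some ingredient of this kind, producing a favorable term independent of $|\nabla u|$, is what your ansatz is missing.
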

\begin{proof}
By Lemma \ref{lem-rigidity}, let $e\in \mathrm{int}(\widehat{\partial\Sigma_{\bar{T}}})$ be a vector in $\mathbb{S}^{n}$ such that $\partial\Sigma_{\bar{T}}\subset\mathrm{int}(\widehat{\partial C_{\theta,\infty}}(e))$. Then for $t$ sufficiently close to $\bar{T}$, we have
\begin{equation*}
	e\in \mathrm{int}(\widehat{\partial\Sigma_{{t}}}),
\end{equation*}
then we can apply Lemma \ref{Lem-es-height} to obtain a uniform lower bound for the height function $\langle x,e\rangle$, such that 
\begin{equation}\label{lower-height}
	\langle x,e\rangle\geq \cos\theta+\delta,\quad \forall x\in\Sigma_t,t\in[0,\bar{T}).
\end{equation}
We define the function $\chi(x)$ as follows:
\begin{equation}
	\chi(x)=\frac{1}{2}|x|^2+\frac{\beta-1}{2}(\langle x,e\rangle^2-\cos^2{\theta})-\lambda(\langle x,e\rangle-\cos\theta)+1,
\end{equation}
where 
\begin{equation*}
	\lambda>\frac{1}{\delta},\,\,0<\beta<1,\,\,0<\delta<1-\cos\theta.
\end{equation*}
Then, by direct computation, we obtain
\begin{equation}\label{uni-chi}
	\frac{\beta}{2}-\lambda+\frac{1-\beta}{2}\cos^2{\theta}+\lambda\cos\theta+1\leq \chi(x)\leq \frac{1}{2}+\frac{\beta-1}{2}\delta^2< \frac{1}{2}.
\end{equation}
Next, we define the auxiliary function $\xi$ by
\begin{equation}
	\xi=\frac{1}{F(1+\cos\theta\langle x,\nu \rangle)}\frac{1}{\frac{1}{2}-\chi}:=\frac{G(\chi)}{F(1+\cos\theta\langle x,\nu \rangle)},
\end{equation}
The estimate in \eqref{uni-chi} ensures that the function $\xi$ is well defined. Moreover,  there exists uniform constants $0<c_1<c_2<+\infty$ such that 
\begin{align}\label{uni-G}
c_1\leq	\frac{G(\chi)}{1+\cos\theta\langle x,\nu \rangle}\leq c_2.
\end{align}
We now compute $\nabla_{\mu}{\xi}$ along $\partial\Sigma_t$:
\begin{align}
	\nabla_{\mu}(1+\cos\theta\langle x,\nu \rangle)&=\cos\theta\langle\nabla_{\mu}{x},\nu \rangle+\cos\theta\langle x,\nabla_{\mu}{\nu}\rangle\notag\\
	&=\cos\theta\langle \mu,\nu\rangle+\cos\theta h_{\mu\mu}\langle x,\mu \rangle\notag\\
	&=\cos\theta h_{\mu\mu}\langle \sin\theta\mu-\cos\theta\nu,\mu \rangle\notag\\
	&=\cos\theta\sin\theta h_{\mu\mu}\notag\\
	&=\cot\theta h_{\mu\mu}(1+\cos\theta\langle x,\nu \rangle),\label{db-1}\\
	\nabla_{\mu}{\frac{1}{F}}&=(\frac{1}{\sin\theta}+\cot\theta h_{\mu\mu})\frac{1}{F},\label{db-2}\\
	\nabla_{\mu}{G}&=G'\nabla_{\mu}{\chi}=G^2\nabla_{\mu}{\chi}\notag\\
	&=G^2\left[\langle x,\mu\rangle+(\beta-1)\langle x,e\rangle\langle \mu,e\rangle-\lambda\langle\mu,e\rangle\right].\label{db-3}
\end{align}
Combining \eqref{db-1} with \eqref{db-2} and \eqref{db-3}, we have
\begin{align}
	\nabla_{\mu}{\xi}
	=&\left(\nabla_{\mu}{\frac{1}{F}}\right)\frac{G(\chi)}{1+\cos\theta\langle x,\nu \rangle}+\left(\nabla_{\mu}G\right)\frac{1}{F(1+\cos\theta\langle x,\nu \rangle)}+\frac{G}{F}\nabla_{\mu}\left(\frac{1}{1+\cos\theta\langle x,\nu \rangle}\right)\notag\\
	=&\xi\left(\frac{1}{\sin\theta}+G\nabla_{\mu}{\chi}\right)\notag\\
	=&\frac{\xi G}{\sin\theta}\left[\frac{\beta-1}{2}\langle x+\cos\theta\nu,e\rangle^2+\frac{\beta-1}{2}\cos^2{\theta}|e^{\top}|^2-\cos^2\theta-\lambda\cos\theta(1+\langle\nu,e\rangle)\right]<0,\label{boundary-xi}
\end{align}
and hence the maximum of $\xi$ is attained at an interior point of $\Sigma_t$. Next, we compute the evolution equation of $\xi$. We start with the evolution equation of $\chi$ as follows:
\begin{align}
	\partial_t\chi=&\langle\partial_t{x},x\rangle+(\beta-1)\langle x,e\rangle\langle\partial_t{x},e\rangle-\lambda\langle \partial_t{x},e\rangle\notag\\
	=&\frac{1}{F}\langle x,\nu\rangle+\frac{\beta-1}{F}\langle x,e\rangle\langle\nu,e\rangle-\frac{\lambda}{F}\langle\nu,e\rangle+\langle x,\mathcal{T}\rangle+(\beta-1)\langle x,e\rangle\langle \mathcal{T},e\rangle-\lambda\langle \mathcal{T},e\rangle\notag\\
	=&\frac{1}{F}\langle x,\nu\rangle+\frac{\beta-1}{F}\langle x,e\rangle\langle\nu,e\rangle-\frac{\lambda}{F}\langle\nu,e\rangle+\langle \mathcal{T},\nabla\chi \rangle, \label{partial-chi}
\end{align}
\begin{align}
		\frac{1}{F^2}\dot{F}^{k\ell}\nabla_k\nabla_{\ell}{\chi}
		=&\frac{1}{F^2}\dot{F}^{k\ell}\big(g_{k\ell}+\langle x,x_{;\ell k}\rangle+(\beta-1)\langle e_k,e\rangle\langle e_{\ell},e\rangle+(\beta-1)\langle x,e\rangle\langle x_{;\ell k},e\rangle-\lambda\langle x_{;\ell k},e\rangle \big)\notag\\
	=&\frac{1}{F^2}\dot{F}^{ij}g_{ij}-\frac{1}{F}\langle x,\nu\rangle+\frac{\beta-1}{F^2}\dot{F}^{ij}\langle e_i,e\rangle\langle e_j,e\rangle-\frac{\beta-1}{F}\langle x,e\rangle\langle \nu,e\rangle+\frac{\lambda}{F}\langle \nu,e\rangle.\label{hessian-chi}
\end{align}
Hence, we have
\begin{align}
	\mathcal{L}{\chi}=&\partial_t\chi-\frac{1}{F^2}\dot{F}^{k\ell}\nabla_k\nabla_{\ell}{\chi}-\langle \mathcal{T},\nabla\chi \rangle\notag\\
	=&\frac{2}{F}\langle x,\nu\rangle+\frac{2(\beta-1)}{F}\langle x,e\rangle\langle \nu,e\rangle-\frac{2\lambda}{F}\langle \nu,e\rangle-\frac{\beta-1}{F^2}\dot{F}^{ij}\langle e_i,e\rangle\langle e_j,e\rangle-\frac{1}{F^2}\dot{F}^{ij}g_{ij}.\label{L-chi}
\end{align}
Then, by the definition of $G$, we obtain the following evolution equation:
\begin{align}
	\mathcal{L}G=&G^2\mathcal{L}{\chi}-\frac{2G^3}{F^2}\dot{F}^{ij}\nabla_i{\chi}\nabla_j{\chi}\notag\\
	=&\frac{2G^2}{F}\langle x,\nu\rangle+\frac{2(\beta-1)G^2}{F}\langle x,e\rangle\langle \nu,e\rangle-\frac{2\lambda G^2}{F}\langle \nu,e\rangle-\frac{(\beta-1)G^2}{F^2}\dot{F}^{ij}\langle e_i,e\rangle\langle e_j,e\rangle\notag\\
	&-\frac{G^2}{F^2}\dot{F}^{ij}g_{ij}-\frac{2G^3}{F^2}\dot{F}^{ij}\nabla_i{\chi}\nabla_j{\chi}.\label{L-G}
\end{align}
On the other hand, applying \eqref{ev-F} with $V=\frac{1}{F}$, we obtain
\begin{equation}\label{L-F}
	\mathcal{L}\left({\frac{1}{F}}\right)=\frac{1}{F^3}\dot{F}^{ij}(h^2)_{ij}.
\end{equation}
Moreover, using \eqref{ev-normal} we compute
\begin{align}
	\partial_t\left(\frac{1}{1+\cos\theta\langle x,\nu\rangle}\right)=&-\frac{\cos\theta}{(1+\cos\theta\langle x,\nu\rangle)^2}\partial_t{\langle x,\nu\rangle}\notag\\
	=&-\frac{\cos\theta}{(1+\cos\theta\langle x,\nu\rangle)^2}\left(\frac{1}{F}+\langle x,-\nabla \frac{1}{F}+g^{ij}h(e_i,\mathcal{T})e_j\rangle\right)\notag\\
	=&-\frac{\cos\theta}{(1+\cos\theta\langle x,\nu\rangle)^2}\frac{1}{F}+\frac{\cos\theta}{(1+\cos\theta\langle x,\nu\rangle)^2}\langle x,\nabla \frac{1}{F}\rangle\notag\\
	&+\langle \mathcal{T},\nabla\left(\frac{1}{1+\cos\theta\langle x,\nu\rangle}\right)\rangle,\label{partail-xnu}
\end{align}
where in the second equality we used \eqref{ev-normal}.
We also have
\begin{align}
	\nabla_j\left(\frac{1}{1+\cos\theta\langle x,\nu\rangle}\right)=&-\frac{\cos\theta}{(1+\cos\theta\langle x,\nu\rangle)^2}h_j^k\langle x,e_k\rangle,\label{n-xnu}\\
	\nabla_i\nabla_j\left(\frac{1}{1+\cos\theta\langle x,\nu\rangle}\right)=&\frac{2\cos^2\theta}{(1+\cos\theta\langle x,\nu\rangle)^3}h_i^{\ell}h_j^k \langle x,e_k\rangle\langle x,e_{\ell}\rangle-\frac{\cos\theta}{(1+\cos\theta\langle x,\nu\rangle)^2}h_{j;i}^k\langle x,e_k\rangle\notag\\
	&-\frac{\cos\theta}{(1+\cos\theta\langle x,\nu\rangle)^2}h_{ij}+\frac{\cos\theta}{(1+\cos\theta\langle x,\nu\rangle)^2}\langle x,\nu\rangle(h^2)_{ij}.\label{h-xnu}
\end{align}
Combining \eqref{partail-xnu} with \eqref{h-xnu} yields
\begin{equation}
	\mathcal{L}\left(\frac{1}{1+\cos\theta\langle x,\nu\rangle}\right)=-\frac{2\cos^2\theta}{F^2(1+\cos\theta\langle x,\nu\rangle)^3}\dot{F}^{ij}h_i^{\ell}h_j^k \langle x,e_k\rangle\langle x,e_{\ell}\rangle-\frac{\cos\theta\langle x,\nu\rangle
	}{F^2(1+\cos\theta\langle x,\nu\rangle)^2}\dot{F}^{ij}(h^2)_{ij}.\label{L1xnu}
\end{equation}
Finally,  the evolution of the auxiliary function of $\xi$ satisfies
\begin{align}
	\mathcal{L}{\xi}=&\frac{\mathcal{L}G}{F(1+\cos\theta\langle x,\nu \rangle)}+\frac{G}{1+\cos\theta\langle x,\nu \rangle}\mathcal{L}\left(\frac{1}{F}\right)+\frac{G}{F}\mathcal{L}\left(\frac{1}{1+\cos\theta\langle x,\nu \rangle}\right)\notag\\
	&-2\frac{\dot{F}^{ij}}{F^3}\nabla_i\left(\frac{1}{1+\cos\theta\langle x,\nu \rangle}\right)\nabla_j{G}-\frac{2}{F^2(1+\cos\theta\langle x,\nu \rangle)}\dot{F}^{ij}\nabla_i\left(\frac{1}{F}\right)\nabla_j G\notag\\
	&-\frac{2G}{F^2}\dot{F}^{ij}\nabla_i\left(\frac{1}{F}\right)\nabla_j\left(\frac{1}{1+\cos\theta\langle x,\nu \rangle}\right).\label{Lxi-1}
\end{align}
Since the maximum of $\xi$ is attained at an interior point, we assume that 
\begin{align*}
	\xi(x,t)=\max_{\Sigma\times[0,t]}\xi, \ t\in[0,\bar{T}).
\end{align*} 
Then at this  point, we have $\mathcal{L}\xi\geq 0$ and
\begin{equation*}
	\nabla_i\left(\frac{G}{F(1+\cos\theta\langle x,\nu \rangle)}\right)=0,
\end{equation*}
which yields
\begin{align}
	\nabla_i\left(\frac{1}{F}\right)=&-\frac{\nabla_i G}{GF}-\frac{1+\cos\theta\langle x,\nu \rangle}{F}\nabla_i\left(\frac{1}{1+\cos\theta\langle x,\nu \rangle}\right)\notag\\
	=&-\frac{G}{F}\nabla_i{\chi}-\frac{1+\cos\theta\langle x,\nu \rangle}{F}\nabla_i\left(\frac{1}{1+\cos\theta\langle x,\nu \rangle}\right),\label{n-1-F}
\end{align}
where 
\begin{align}
	\nabla_i{\chi}=&\langle x,e_i\rangle+(\beta-1)\langle x,e\rangle \langle e_i,e\rangle-\lambda \langle e_i,e\rangle,\label{nabla-chi}\\
	\nabla_i G=&G'\nabla_i\chi=G^2\nabla_i\chi.\label{nabla-G}
\end{align}
Combining \eqref{n-1-F}, \eqref{nabla-chi} with \eqref{n-xnu}, we obtain
\begin{align}
	&\mathrm{the\,\,sum \,\,of\,\, the\,\, last\,\, two \,\,lines\,\, of\,\,\eqref{Lxi-1}}\nonumber\\
 =&2\xi^3(1+\cos\theta\langle x,\nu \rangle)^2\dot{F}^{ij}\nabla_i\chi \nabla_j \chi-\frac{2\cos\theta}{F}\dot{F}^{ij}h_j^k\nabla_i{\chi}\langle x,e_k\rangle\xi^2\notag\\
	&+\frac{2\cos^2\theta}{G^2}\dot{F}^{ij}h_j^k h_i^{\ell}\langle x,e_k\rangle\langle x,e_{\ell}\rangle\xi^3.\label{Lxi-2}
\end{align}
Now, combining \eqref{L-G}, \eqref{L-F}, \eqref{L1xnu}, \eqref{Lxi-1} with \eqref{Lxi-2}, at the point $(x,t)$ we have
\begin{align}
	0\leq\mathcal{L}{\xi}(x,t)=&2\langle x,\nu\rangle(1+\cos\theta\langle x,\nu \rangle)\xi^2+2(\beta-1)\langle x,e\rangle\langle\nu,e \rangle(1+\cos\theta\langle x,\nu \rangle)\xi^2\notag\\
	&-2\lambda\langle\nu,e \rangle(1+\cos\theta\langle x,\nu \rangle)\xi^2-\frac{1}{F}\dot{F}^{ij}g_{ij}(1+\cos\theta\langle x,\nu \rangle)\xi^2\notag\\
	&-\frac{\beta-1}{F}\dot{F}^{ij}\langle e_i,e\rangle \langle e_j,e\rangle(1+\cos\theta\langle x,\nu \rangle)\xi^2+\frac{\dot{F}^{ij}(h^2)_{ij}\xi}{F^2(1+\cos\theta\langle x,\nu \rangle)}\notag\\
	&-\frac{2}{F}\cos\theta\dot{F}^{ij}h_j^k\nabla_i \chi \langle x,e_k\rangle\xi^2.\label{ev-xi}
\end{align}
Note that the geometric quantities, such as $\langle x,\nu\rangle$, $\langle \nu,e\rangle$, $\langle x,e\rangle$ and $|\nabla\chi|$ are uniformly bounded. Moreover, 
\begin{align*}
	\dot{F}^{ij}\langle e_i,e\rangle\langle e_j,e\rangle=&\sum_{i=1}^n\frac{\partial F}{\partial\kappa_i}\langle e_i,e\rangle^2\leq \sum_{i=1}^n\frac{\partial F}{\partial\kappa_i}=\dot{F}^{ij}g_{ij},\\
	\dot{F}^{ij}(h^2)_{ij}=&\sum_{i=1}^n{\frac{\partial F}{\partial \kappa_i}\kappa_i^2}\leq H\sum_{i=1}^n\frac{\partial F}{\partial\kappa_i}\kappa_i=FH\leq CF,
\end{align*}
from which we conclude
\begin{align}
	&-\frac{1}{F}\dot{F}^{ij}g_{ij}(1+\cos\theta\langle x,\nu \rangle)\xi^2-\frac{\beta-1}{F}\dot{F}^{ij}\langle e_i,e\rangle \langle e_j,e\rangle(1+\cos\theta\langle x,\nu \rangle)\xi^2\notag\\
	\leq&-\frac{\beta}{F}\dot{F}^{ij}g_{ij}(1+\cos\theta\langle x,\nu \rangle)\xi^2\notag\\
	\leq&-\frac{\beta}{F}(1-\cos\theta)\xi^2,\label{es-1}
\end{align}
and
\begin{align}
	\frac{\dot{F}^{ij}(h^2)_{ij}\xi}{F^2(1+\cos\theta\langle x,\nu \rangle)}\leq \frac{C}{F}\xi,\label{es-2}
\end{align}
where in the last inequality of \eqref{es-1} we used the fact that $\dot{F}^{ij}g_{ij}\geq 1$. We estimate the last line of \eqref{ev-xi} as follows:
\begin{align}
	&-\frac{2}{F}\cos\theta\dot{F}^{ij}h_j^k\nabla_i \chi \langle x,e_k\rangle\xi^2\notag\\
	=&-\frac{2}{F}\cos\theta\xi^2\sum_{i=1}^n{\frac{\partial F}{\partial\kappa_i}\kappa_i}\nabla_i\chi \langle x,e_i\rangle\notag\\
	\leq&\frac{C\xi^2}{F}\sum_{i=1}^n{\frac{\partial F}{\partial\kappa_i}\kappa_i}
	=C\xi^2.\label{es-3}
\end{align}
Combining \eqref{ev-xi}, \eqref{es-1}, \eqref{es-2} with \eqref{es-3}, there exists a uniform constant $C>0$ such that 

\begin{equation}\label{In-ximax}
 \frac{C}{F}\xi(x,t)+(C-(1-\cos\theta)\frac{\beta}{F})\xi^2(x,t)\geq0.
\end{equation}
By \eqref{uni-G} and the definition of $\xi$, we obtain
\begin{align}\label{est-xi-2}
\frac{1}{F}\leq\frac{\xi}{c_1}, \quad \text{and}\ 	(1-\cos\theta)\frac{\beta}{F}\geq\frac{\beta(1-\cos\theta)}{c_2}\xi.
\end{align}
Substituting \eqref{est-xi-2} into \eqref{In-ximax}, at the maximum point $(x,t)$, we now have
\begin{align}
	\left(C\frac{c_1+1}{c_1}-\frac{\beta(1-\cos\theta)}{c_2}\xi\right)\xi^2\geq0,
\end{align}
this implies that 
\begin{align*}
	\xi(x,t)\leq C\frac{c_2(c_1+1)}{c_1\beta(1-\cos\theta)}.
\end{align*}
	Combining this with the uniform bound \eqref{uni-G} for $\frac{G}{1+\cos\theta\langle x,\nu\rangle}$ gives an uniform upper bound for $\frac{1}{F}$.

%
%

\end{proof}
Next, using the upper bound of the second fundamental form $\vert A\vert^2$ and the lower bound of the function $F$, we conclude that $F$ is uniform elliptic along the flow.
\begin{lem}\label{uniform elliptic}
	 Assume that $\partial\Sigma_{\bar{T}}$ is not a flat ball around some point $e\in\mathbb{S}^n$, then for all $ i=1,\cdots,n$ there holds
	\begin{align*}
		0<\epsilon\leq \frac{\partial F}{\partial\kappa_i}(z,t)\leq C<+\infty, \ \forall  (z,t)\in \mathbb{B}^n\times[0,\bar{T}),
	\end{align*} 
	where $\epsilon$ and $C$ are two uniform positive  constants  depending on $\Sigma$ and the distance of $\partial\Sigma_{\bar{T}}$ to a suitable geodesic slice $\partial C_{\theta,\infty}(e)$.
\end{lem}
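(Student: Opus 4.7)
The plan is to confine the principal-curvature vector $\kappa(z,t)=(\kappa_1,\dots,\kappa_n)$ to a compact subset of the open cone $\Gamma^+$ uniformly in $(z,t)\in \bar{\mathbb{B}}^n\times[0,\bar T)$. Once this is achieved, both bounds on $F_i$ follow at once from continuity and strict positivity of $F_i$ on $\Gamma^+$ (Assumption \ref{assum}(iii)), since $F_i$ is a continuous positive function on the compact region in question and hence attains a positive minimum and a finite maximum.

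For the upper bound $\kappa_i \leq C_1$, I would invoke Lemma \ref{upper-F-H}: the pointwise bound $\max H(\cdot,t)\leq \max H(\cdot,0)=:C_1$, combined with strict convexity $0<\kappa_i\leq H$, gives the claim immediately on $[0,\bar T)$.

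For the lower bound $\kappa_i \geq c_0 > 0$, this is where the inverse-concavity hypothesis enters in a decisive way. The function $F_*$ is symmetric, concave, homogeneous of degree one, and satisfies $F_*(1,\dots,1) = 1/F(1,\dots,1) = 1$. Euler's relation at $(1,\dots,1)$ together with symmetry forces $(F_*)_i(1,\dots,1) = 1/n$, and the tangent-plane inequality from concavity of $F_*$ yields
\[
F_*(\sigma) \;\leq\; F_*(1,\dots,1) + \sum_i (F_*)_i(1,\dots,1)(\sigma_i-1) \;=\; \frac{1}{n}\sum_i \sigma_i
\]
for every $\sigma \in \Gamma^+$. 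Specializing to $\sigma_i = 1/\kappa_i$ and using $F_*(1/\kappa_1,\dots,1/\kappa_n)=1/F(\kappa)$ converts this into the harmonic-mean inequality
\[
\frac{1}{F(\kappa)} \;\leq\; \frac{1}{n}\sum_{i=1}^n \frac{1}{\kappa_i}.
\]
Since Lemma \ref{lowbound-F} provides $F(\kappa)\geq 1/c =: \epsilon_0 > 0$, we get $\sum_i 1/\kappa_i \leq n/\epsilon_0$, and therefore $\kappa_i \geq \epsilon_0/n =: c_0$ for every $i$ and every $(z,t)$.

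Combining the two steps, $\kappa(z,t)\in [c_0,C_1]^n \subset \Gamma^+$, which is compact in $\Gamma^+$. Continuity and strict positivity of $F_i$ then yield the bounds $\epsilon$ and $C$. The only nontrivial conceptual step is the second one: recognizing that inverse concavity plus the normalization $F_*(1,\dots,1)=1$ gives exactly the harmonic--arithmetic bound on $1/F$ needed to promote a lower bound on $F$ into a lower bound on each individual principal curvature. Everything else is routine compactness.
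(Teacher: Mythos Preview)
Your argument for the lower bound $\kappa_i\geq c_0$ contains a genuine error in the direction of the inequality. From inverse concavity you correctly derive
\[
\frac{1}{F(\kappa)}\;\leq\;\frac{1}{n}\sum_{i=1}^n\frac{1}{\kappa_i},
\]
i.e.\ $F(\kappa)\geq$ (harmonic mean). But this is a \emph{lower} bound on $\sum_i 1/\kappa_i$, not an upper one: combining it with $F\geq\epsilon_0$ (equivalently $1/F\leq 1/\epsilon_0$) gives two upper bounds on $1/F$ and says nothing about the relation between $1/\epsilon_0$ and $\frac{1}{n}\sum_i 1/\kappa_i$. The conclusion $\sum_i 1/\kappa_i\leq n/\epsilon_0$ simply does not follow. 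In fact the desired uniform lower bound on each $\kappa_i$ is \emph{false} at this stage for general $F$ satisfying Assumption~\ref{assum}: take $F=E_1$ with $n=2$, $\kappa_1=\varepsilon\to 0$, $\kappa_2=2\epsilon_0-\varepsilon$; then $F\equiv\epsilon_0$ while $\kappa_1\to 0$. (The uniform lower bound on the $\kappa_i$ is established only \emph{later}, in Lemma~\ref{Lem-pconvex}, and its proof uses the uniform ellipticity you are trying to prove here.)

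The paper's argument avoids this trap by working in the larger cone $\Gamma$ rather than $\Gamma^+$. One does not need $\kappa_i\geq c_0$; one only needs $\kappa$ to lie in a compact subset of $\Gamma$. The upper bound $|A|\leq R$ from Lemma~\ref{upper-F-H} confines $\kappa$ to $\Gamma\cap B_{2R}$, and if $\kappa$ accumulated on $\partial\Gamma$ then $F\to 0$ there (Assumption~\ref{assum}(i), $F|_{\partial\Gamma}=0$ and $F\in C^0(\bar\Gamma)$), contradicting the lower bound on $F$ from Lemma~\ref{lowbound-F}. Hence $\kappa$ stays in a compact $\Lambda\subset\subset\Gamma$, and continuity plus strict positivity of $F_i$ on $\Gamma$ give the two-sided bounds. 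Note that this uses the hypothesis $F|_{\partial\Gamma}=0$, which your attempt never invokes.
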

\begin{proof}
	By Lemma \ref{upper-F-H},  there exists a positive constant $R$ such that $\vert A\vert\leq R$ on $\Sigma_t$ for all $t\in[0,\bar{T})$. This implies that the principal curvatures $(\kappa_i)$ of $\Sigma_t $ $(t\in[0,\bar{T}))$ lie in a compact subset $\Lambda\subset\subset\Gamma\cap B_{2R}$. Otherwise, there would exist a sequence of points $p_k$ such that $(\kappa_1,\cdots,\kappa_n)(p_k)$ tends to $\partial\Gamma$ as $k\to \infty$. Since $F$ vanishes on $\partial\Gamma$ by Assumption \ref{assum}, we have $F(p_k)\to 0$ as $k\to\infty$, which contradicts the uniform lower bound $\frac{1}{F}\leq c$ given by Lemma \ref{lowbound-F}. Therefore, the principal curvatures remain uniformly away from $\partial\Gamma$, and since we assume $\frac{\partial F}{\partial\kappa_i}>0$, it follows that
	 $\frac{\partial F}{\partial\kappa_i}$ is uniformly bounded on $\Lambda$. This completes the proof.
\end{proof}

Now, we are prepared to show that $\bar{T}=T^{*}$.
\begin{lem}\label{Lem-pconvex}
	As long as the flow \eqref{Inverse-flow} exists, $\Sigma_t$ remains strictly convex provided the initial hypersurface $\Sigma$ is strictly convex.
\end{lem}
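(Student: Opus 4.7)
The strategy is to argue by contradiction: suppose $\bar T<T^{*}$. Then on $[0,\bar T)$ the solution is smooth and strictly convex while strict convexity degenerates at time $\bar T$. If $\partial\Sigma_{\bar T}$ were a flat ball boundary $\partial C_{\theta,\infty}(e)$, then $\Sigma_{\bar T}$ would have to collapse to a flat disk with vanishing curvatures, which is incompatible with smoothness of the flow at $\bar T<T^{*}$. Thus by Lemma \ref{lem-rigidity} we may assume $\partial\Sigma_{\bar T}\subset\operatorname{int}(\widehat{\partial C_{\theta,\infty}}(e))$ for some $e\in\mathbb{S}^n$, and Lemmas \ref{lowbound-F}, \ref{uniform elliptic} then furnish uniform bounds $0<c\leq F\leq C$ and $0<\epsilon\leq\dot F^i\leq C$ on $[0,\bar T)$. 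The remaining task is to show that $\kappa_{\min}$ stays bounded away from $0$ up to time $\bar T$.

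The plan is to apply the tensor maximum principle to the inverse Weingarten operator $b_m^n$, whose evolution is given by Proposition \ref{Prop-evo-inverse}(iv), and to bound its largest eigenvalue $\lambda=1/\kappa_{\min}$. At an interior space-time maximum of $\lambda$, I contract \eqref{evo-b} with the corresponding eigenvector $v$. The only dangerous term is $-F^{-2}b_m^ib^{jn}\ddot F^{k\ell,pq}\nabla_i h_{k\ell}\nabla_j h_{pq}$; invoking the concave $+$ inverse-concave inequality \eqref{pro-con-in} of Lemma \ref{lem-ger96} with $\eta_{ij}=\nabla_v h_{ij}$, the $F^{-1}(\dot F^{ij}\eta_{ij})^{2}$ contribution cancels against the $+2F^{-3}b_m^ib^{jn}\nabla_iF\nabla_jF$ term in the evolution (both scale as $\lambda^{2}F^{-3}(\nabla_v F)^{2}$), while the $2\dot F^{ik}b^{j\ell}\eta_{ij}\eta_{k\ell}$ contribution absorbs the Codazzi-type term $-2F^{-2}b_m^pb^{in}b^{qs}\dot F^{k\ell}\nabla_k h_{pq}\nabla_\ell h_{si}$. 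The surviving differential inequality reads
\begin{equation*}
0\leq \mathcal L\lambda\leq -\frac{1}{F^{2}}\dot F^{k\ell}(h^{2})_{k\ell}\,\lambda+\frac{2}{F}.
\end{equation*}
Combining homogeneity $\dot F^{k\ell}h_{k\ell}=F$ with Cauchy--Schwarz gives $\dot F^{k\ell}(h^{2})_{k\ell}\geq F^{2}/\sum_i\dot F^i$, and together with $F\geq c$ and $\sum_i\dot F^i\leq nC$ this produces an explicit upper bound $\lambda\leq C'$ at any interior space-time maximum.

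The main obstacle is ruling out boundary maxima. Diagonalizing $h_i^j$ at a boundary point with $\mu$ as a principal direction, which is possible by Proposition \ref{boundary-h-property}(1), there are two cases. If the maximum eigendirection is tangential, say $e_\alpha$ with $h_{\alpha\alpha}=\kappa_{\min}$, then Proposition \ref{boundary-h-property}(4) yields $\nabla_\mu h_{\alpha\alpha}=\tilde h_{\alpha\alpha}(h_{\mu\mu}-h_{\alpha\alpha})\geq 0$ because $\tilde h_{\alpha\alpha}>0$ by strict convexity of $\partial\Sigma_t$ in $\mathbb{S}^n$, and hence $\nabla_\mu(1/h_{\alpha\alpha})\leq 0$, contradicting the Hopf boundary point lemma for the subsolution $\lambda$. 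The subtler case is when $\mu$ itself is the maximum eigendirection, where the sign of $\nabla_\mu h_{\mu\mu}$ from \eqref{nablahmumu} is not determined. This is resolved by replacing $b(v,v)$ with an auxiliary function of the form $\tilde\lambda=\lambda\cdot(1+\cos\theta\langle x,\nu\rangle)^{-A}$ (and, in the degenerate free-boundary case $\theta=\frac{\pi}{2}$ where this factor becomes trivial, an additional multiplicative term $e^{A\langle x,e\rangle}$ involving the height function controlled by Lemma \ref{Lem-es-height}); with $A=A(c,\epsilon,C,\theta)$ chosen large enough, the boundary derivative \eqref{db-1} together with $\cot\theta\geq 0$ forces $\nabla_\mu\tilde\lambda\leq 0$ at a boundary maximum, again violating Hopf, while the interior inequality acquires only lower-order corrections that can be absorbed by the good term $-F^{-2}\dot F^{k\ell}(h^2)_{k\ell}\lambda$.

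Combining the interior and boundary analyses, $\lambda$ is uniformly bounded on $[0,\bar T)$, so $\kappa_{\min}\geq 1/C''>0$ up to time $\bar T$. This contradicts the maximality of $\bar T<T^{*}$, so $\bar T=T^{*}$ and $\Sigma_t$ remains strictly convex as long as the flow exists.
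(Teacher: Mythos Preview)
Your overall contradiction strategy and the interior analysis are both sound and close to the paper's: the paper also appeals to Lemma~\ref{lem-ger96} to discard the gradient terms, though it applies it to the \emph{trace} $\tilde H=\sum_i\kappa_i^{-1}$ (obtaining $\mathcal L\tilde H\le -F^{-2}\dot F^{k\ell}(h^2)_{k\ell}\tilde H+2n/F$) rather than to the largest eigenvalue $\lambda$.

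The genuine gap is in your boundary barrier for the case where $\mu$ is the $\kappa_{\min}$–direction. From \eqref{db-1} your factor $(1+\cos\theta\langle x,\nu\rangle)^{-A}$ contributes only $-A\cot\theta\,h_{\mu\mu}$ to $\nabla_\mu\log\tilde\lambda$; this correction \emph{vanishes} as $h_{\mu\mu}\to 0$, i.e.\ exactly in the regime you are trying to control, so ``$A$ large enough'' cannot be chosen uniformly. Your alternative $e^{A\langle x,e\rangle}$ (for $\theta=\tfrac\pi2$) has the wrong sign: it adds $+A\langle\mu,e\rangle>0$ to $\nabla_\mu\log\tilde\lambda$, making the boundary inequality worse, not better. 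Also, the invocation of Hopf is not quite in order: you need a scalar barrier satisfying a parabolic inequality, not just a sign on $\nabla_\mu$ of a single diagonal entry of $b$.

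The paper handles the boundary differently. It uses the scalar $\tilde H$ and shows, via Euler's identity $F=\sum_i\dot F^i\kappa_i$ inside \eqref{nablahmumu}, that $\nabla_\mu\log\tilde H\le c_3$ with $c_3$ depending only on the uniform ellipticity constants of Lemma~\ref{uniform elliptic} (the apparent $(b^{\mu\mu})^2F/\dot F^{\mu\mu}$ term collapses to one linear in $b^{\mu\mu}$). The correction is then $-\beta\log\langle x,e\rangle$: since $\partial\Sigma_{\bar T}\subset\operatorname{int}(\widehat{\partial C_{\theta,\infty}}(e))$, one has $\nabla_\mu\langle x,e\rangle=\langle\mu,e\rangle\ge c_4\langle x,e\rangle>0$ uniformly on $\partial\Sigma_t$, so $\nabla_\mu\phi<c_3-\beta c_4<0$ for $\beta$ large and the maximum is forced interior. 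The same height-function correction (with the correct sign) would also repair your $\lambda$–argument; the $(1+\cos\theta\langle x,\nu\rangle)$ factor cannot.
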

\begin{proof}
If there exists a vector $e\in\mathbb{S}^n$, such that $\partial\Sigma_{\bar{T}}=\partial C_{\theta,\infty}(e)$, then $\partial\Sigma_{\bar{T}}$ is the singularity of the flow \eqref{Inverse-flow} and hence $\bar{T}=T^{*}$. In the following, we prove that if $\partial\Sigma_{\bar{T}}$ is not a flat ball around some point $e\in\mathbb{S}^n$, then the principal curvatures of $\Sigma_t$ have an uniform positive lower bound as $t\to\bar{T}$, which contradicts the definition of $\bar{T}$. By Lemma \ref{lowbound-F}, if $\partial\Sigma_{\bar{T}}$ is not a flat ball around some point $e\in\mathbb{S}^n$, then there exits a constant $c_1>0$, such that  
\begin{equation*}
	F(\cdot,t)\geq c_1,\quad \text{in}\,\,\,\bar{\mathbb{B}}^n\times[0,\bar{T}).
\end{equation*}
By \eqref{re-F-H}, we have
\begin{equation}\label{lowbound-H}
	H(\cdot,t)\geq nF(\cdot,t)\geq  nc_1:=c_2,\quad \text{in}\,\,\,\bar{\mathbb{B}}^n\times[0,\bar{T}).
\end{equation}
We define 
\begin{equation}
	\tilde{H}=\sum_{i=1}^n{\frac{1}{\kappa_i}}=g_{ij}b^{ij},
\end{equation}
where $\{b^{ij}\}$ is the inverse matrix of $\{h_{ij}\}$, which is defined in Proposition \ref{Prop-evo-inverse}. Along $\partial\Sigma_t$,  since $\mu$ is a principal direction, we may choose  a local orthonormal basis  $\{\mu,e_{\alpha}\},\alpha=1,\cdots,n-1$  on $\Sigma_t$, such that $b^{\mu\alpha}=0$ for all $1\leq\alpha\leq n-1$. 

We now compute $\nabla_{\mu}\tilde{H}$ along $\partial\Sigma_t$. Using  Property $(3)$ of Proposition \ref{boundary-h-property} and \eqref{nablahmumu} we obtain
\begin{align}
	\nabla_{\mu}{\tilde{H}}=&\nabla_{\mu}{b^{\mu\mu}}+\sum_{\alpha=1}^n{\nabla_{\mu}b^{\alpha\alpha}}\notag\\
	=&-(b^{\mu\mu})^2\nabla_{\mu}{h_{\mu\mu}}-(b^{\alpha\alpha})^2\nabla_{\mu}{h_{\alpha\alpha}}\notag\\
	=&(b^{\mu\mu})^2\left(\frac{1}{\sin\theta}+\cot\theta h_{\mu\mu}\right)\frac{F}{\dot{F}^{\mu\mu}}+(b^{\mu\mu})^2\frac{\dot{F}^{\alpha\alpha}}{\dot{F}^{\mu\mu}}\nabla_{\mu}{h_{\alpha\alpha}}-(b^{\alpha\alpha})^2\nabla_{\mu}{h_{\alpha\alpha}}\nonumber\\
	=&\left(\frac{1}{\sin\theta}+\cot\theta h_{\mu\mu}\right)\left(1+\frac{\dot{F}^{\alpha\alpha}}{\dot{F}^{\mu\mu}}\right)b^{\mu\mu}-\cot\theta(b^{\mu\mu})^2(h_{\alpha\alpha}-h_{\mu\mu})^2\frac{\dot{F}^{\alpha\alpha}}{\dot{F}^{\mu\mu}}\nonumber\\
	&-(b^{\alpha\alpha})^2\left(\frac{1}{\sin\theta}+\cot\theta h_{\alpha\alpha}\right)(h_{\mu\mu}-h_{\alpha\alpha})\nonumber\\
	\leq&\left(\frac{1}{\sin\theta}+\cot\theta h_{\mu\mu}\right)\left(1+\frac{\dot{F}^{\alpha\alpha}}{\dot{F}^{\mu\mu}}\right)b^{\mu\mu}+\left(\frac{1}{\sin\theta}+\cot\theta h_{\alpha\alpha}\right)b^{\alpha\alpha}.
\end{align} 
By Lemma \ref{uniform elliptic}, there exist two positive constants $\epsilon$ and $C$, such that 
\begin{equation*}
	\epsilon\leq \dot{F}^{\mu\mu},\dot{F}^{\alpha\alpha}\leq C,\quad \alpha=1,\cdots,n-1.
\end{equation*}
It then follows that there exists a positive constant $c_3>0$ such that
\begin{equation}\label{ntH}
	\nabla_{\mu}\tilde{H}< c_3\tilde{H},
\end{equation}
since $h_{\mu\mu}<H$ and $b^{\mu\mu}<\tilde{H}$. Define the auxiliary function as
\begin{equation}\label{fun-phi}
	\phi=\log\tilde{H}-\beta\log{\langle x,e \rangle}-\alpha t, 
\end{equation}
where $\alpha,\beta$ are two positive constants to be determined. Since $\partial\Sigma_{\bar{T}}\subset\mathrm{int}(\widehat{\partial C_{\theta,\infty}}(e))$,  if we denote by $\bar{B}_r(e)$ the geodesic ball in $\mathbb{S}^n$ centered at $e$ with radius $r$, then there exists a constant $r_0<\theta$,   depending on the distance between $\partial\Sigma_{\bar{T}}$ and $\partial C_{\theta,\infty}(e)$, such that $\widehat{\partial\Sigma}\subset\bar{B}_{r_0}(e)$. Consequently, along $\partial\Sigma_t$ for all $t\in[0,\bar{T})$, we have the following estimates:
\begin{equation*}
	\langle x,e\rangle\geq \cos r_0,\quad -\sin r_0\leq \langle \bar{\nu},e \rangle\leq 0.
\end{equation*}
It then follows that
\begin{align}
	\nabla_{\mu}\langle x,e\rangle=&\langle \mu,e\rangle\notag\\
	=&\sin\theta\langle x,e\rangle+\cos\theta\langle \bar{\nu},e \rangle\notag\\
	\geq& \sin\theta\cos r_0-\cos\theta\sin r_0\notag\\
	=&\sin(\theta-r_0)\notag\\
	\geq&c_4\geq c_4\langle x,e\rangle,\quad \text{along}\,\,\partial\Sigma_t.
\end{align}
Therefore, if we choose
\begin{equation}\label{defn-beta}
	\beta=\frac{c_3+1}{c_4},
\end{equation}
then 
\begin{equation}
	\nabla_{\mu}{\phi}<-1 \quad \text{along}\,\, \partial\Sigma_t.
\end{equation}
Hence, the maximum of $\phi$ is attained in the interior of $\Sigma_t$. On the one hand,
\begin{align*}
	\partial_t\langle x,e\rangle=&\frac{1}{F}\langle\nu,e\rangle+\langle \mathcal{T},e\rangle=\frac{1}{F}\langle\nu,e\rangle+\langle \mathcal{T},\nabla\langle x,e\rangle\rangle,\\
	\frac{1}{F^2}\dot{F}^{k\ell}\nabla_k\nabla_{\ell}\langle x,e\rangle=&\frac{1}{F^2}\dot{F}^{k\ell}(-h_{k\ell}\langle\nu,e\rangle)=-\frac{1}{F}\langle\nu,e\rangle.
\end{align*}
It follows that
\begin{equation}\label{Lheight}
	\mathcal{L}\langle x,e\rangle=\frac{2}{F}\langle\nu,e\rangle.
\end{equation}
On the other hand, taking trace of \eqref{evo-b}, we have
\begin{align}
	\mathcal{L}{\tilde{H}}=&-\frac{1}{F^2}(b^2)^{ij}\ddot{F}^{k\ell,pq}\nabla_i{h_{k\ell}}\nabla_j{h_{pq}}+\frac{2}{F^3}(b^2)^{ij}\nabla_i F\nabla_j F\notag\\
	&-\frac{2}{F^2}(b^2)^{ip}b^{qs}\dot{F}^{k\ell}\nabla_k{h_{pq}}\nabla_{\ell}{h_{si}}-\frac{1}{F^2}\dot{F}^{k\ell}(h^2)_{k\ell}\tilde{H}+\frac{2n}{F}\notag\\
	\leq& -\frac{1}{F^2}\dot{F}^{k\ell}(h^2)_{k\ell}\tilde{H}+\frac{2n}{F},\label{L-H}
\end{align}
where we used the Codazzi equation  and Lemma \ref{lem-ger96}. Combining \eqref{Lheight} with \eqref{L-H}, we obtain
\begin{align}
	\mathcal{L}\phi=&\frac{\mathcal{L}{\tilde{H}}}{\tilde{H}}-\frac{\beta}{\langle x,e\rangle}\mathcal{L}{\langle x,e\rangle}+\frac{\dot{F}^{k\ell}}{F^2\tilde{H}^2}\nabla_k{\tilde{H}}\nabla_{\ell}{\tilde{H}}-\frac{\beta}{F^2\langle x,e\rangle^2}\dot{F}^{k\ell}\langle e_k,e\rangle\langle e_{\ell},e\rangle-\alpha\notag\\
	\leq &-\frac{1}{F^2}\dot{F}^{k\ell}(h^2)_{k\ell}+\frac{2n}{F\tilde{H}}-\frac{2\beta}{\langle x,e\rangle F}\langle\nu,e\rangle+\frac{\dot{F}^{k\ell}}{F^2\tilde{H}^2}\nabla_k{\tilde{H}}\nabla_{\ell}{\tilde{H}}\notag\\
	&-\frac{\beta}{F^2\langle x,e\rangle^2}\dot{F}^{k\ell}\langle e_k,e\rangle\langle e_{\ell},e\rangle-\alpha.\label{L-phi-1}
\end{align}
At the maximum point of $\phi$ in $\Sigma_t$ for $t\in[0,\bar{T})$, we have $\nabla\phi=0$ and $\frac{1}{F^2}\dot{F}^{k\ell}\nabla_k\nabla_{\ell}\phi\leq 0$ , which means
\begin{equation}\label{Con-naphi}
	\nabla_i{\tilde{H}}=\frac{\beta\tilde{H}}{\langle x,e\rangle}\langle e_i,e\rangle.
\end{equation}
Plugging \eqref{Con-naphi} into \eqref{L-phi-1}, we derive
\begin{align}
	\partial_t\phi\leq\mathcal{L}{\phi}\leq&-\frac{1}{F^2}\dot{F}^{k\ell}(h^2)_{k\ell}+\frac{2n}{F\tilde{H}}-\frac{2\beta}{\langle x,e\rangle F}\langle\nu,e\rangle\notag\\
	&+\frac{\beta(\beta-1)}{F^2\langle x,e\rangle^2}\dot{F}^{k\ell}\langle e_k,e\rangle\langle e_{\ell},e\rangle-\alpha,
\end{align}
Note that both $F$ and $H$ are uniformly bounded from below and above. Therefore,  there exists a constant $c_5>0$, such that $F\geq c_5 H$, and hence
\begin{equation}
	F\tilde{H}\geq c_5 H\tilde{H}\geq n^2c_5.
\end{equation}
Similarly, all other terms are uniformly bounded. It follows that there exists a constant $C$ such that
\begin{equation}
\frac{d}{dt}(\max_{\Sigma_t}{\phi})\leq C-\alpha.
\end{equation}
If we choose $\alpha=C+1$, then we have $\frac{d}{dt}(\max_{\Sigma_t}{\phi})\leq0$. Therefore, we conclude that for all $t\in[0, \bar{T})$
\begin{align*} 
	\log\tilde{H}-\beta\log\langle x,e\rangle-\alpha t\leq \max_{\Sigma_0}(	\log\tilde{H}-\beta\log\langle x,e\rangle),
\end{align*}
which implies that $\tilde{H}$ has a uniform upper bounded on $\Sigma_{{t}}$ $(t\in[0,\bar{T}))$. This contradicts the definition of $\bar{T}$. Thus, we must have
 $\partial\Sigma_{\bar{T}}=\partial C_{\theta,\infty}(e)$ for some $e\in\mathbb{S}^n$, and then $\bar{T}=T^{*}$. 
\end{proof}
\begin{cor}
	There holds $T^{* }<\infty$.
\end{cor}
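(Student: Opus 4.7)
The plan is to show $T^{\ast}<\infty$ by squeezing the area $|\Sigma_t|$ between an exponentially growing lower bound and a uniform upper bound.

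For the lower bound, I would apply the evolution equation \eqref{ev-area} with $f=\tfrac{1}{F}$ to obtain
\begin{equation*}
\frac{d}{dt}|\Sigma_t|=\int_{\Sigma_t}\!\left(\frac{H}{F}+\mathrm{div}(\mathcal{T})\right)d\mu_t.
\end{equation*}
The inequality \eqref{re-F-H} combined with the normalization $F(1,\dots,1)=1$ gives $F\leq E_1=H/n$, so $H/F\geq n$. For the boundary contribution, Stokes' theorem together with \eqref{con-T} yields
\begin{equation*}
\int_{\Sigma_t}\mathrm{div}(\mathcal{T})\,d\mu_t=\int_{\partial\Sigma_t}\langle\mathcal{T},\mu\rangle\,ds=\cot\theta\int_{\partial\Sigma_t}\frac{1}{F}\,ds\geq 0,
\end{equation*}
where nonnegativity uses the standing assumption $\theta\in(0,\tfrac{\pi}{2}]$ together with $F>0$ on strictly convex hypersurfaces. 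Hence $\tfrac{d}{dt}|\Sigma_t|\geq n|\Sigma_t|$, which integrates to $|\Sigma_t|\geq|\Sigma_0|e^{nt}$ on $[0,T^{\ast})$.

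For the upper bound, I would invoke Lemma \ref{Lem-pconvex}: the flow hypersurface $\Sigma_t$ stays strictly convex throughout $[0,T^{\ast})$, so $\widehat{\Sigma}_t$ is a convex body contained in $\bar{\mathbb{B}}^{n+1}$. The classical monotonicity of surface area under inclusion of convex bodies then gives
\begin{equation*}
|\Sigma_t|\leq|\Sigma_t|+|\widehat{\partial\Sigma_t}|=|\partial\widehat{\Sigma}_t|\leq|\partial\bar{\mathbb{B}}^{n+1}|=|\mathbb{S}^n|,
\end{equation*}
a time‑independent constant. Combining the two bounds yields $|\Sigma_0|e^{nt}\leq|\mathbb{S}^n|$ for every $t\in[0,T^{\ast})$, so $T^{\ast}\leq\tfrac{1}{n}\log(|\mathbb{S}^n|/|\Sigma_0|)<\infty$.

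The argument is essentially routine once the right inequalities are in hand; the only subtle point is verifying that the boundary term coming from the tangential field $\mathcal{T}$ has the correct sign, which is where the hypothesis $\theta\leq\tfrac{\pi}{2}$ enters decisively (for $\theta\in(\tfrac{\pi}{2},\pi)$ one would have $\cot\theta<0$ and this simple comparison would fail). No new estimates are required beyond what has been established, so I do not anticipate a real obstacle.
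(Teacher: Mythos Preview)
Your argument is correct, but it takes a different route from the paper. The paper observes that the boundary $\partial\Sigma_t$ evolves in $\mathbb{S}^n$ as a hypersurface flow with outward normal speed $\tfrac{1}{F\sin\theta}$; since Lemma~\ref{upper-F-H} bounds $F$ from above, this speed has a uniform positive lower bound, so $\partial\Sigma_t$ must reach the equatorial limit in finite time. Your approach instead controls the \emph{area} of $\Sigma_t$: the inequality $F\le E_1$ (from concavity and homogeneity alone, no maximum principle needed) together with the nonnegative boundary contribution for $\theta\in(0,\tfrac{\pi}{2}]$ forces $|\Sigma_t|\ge|\Sigma_0|e^{nt}$, while convexity of $\widehat{\Sigma}_t$ (via Lemma~\ref{Lem-pconvex}) and the monotonicity of surface area under inclusion of convex bodies caps $|\Sigma_t|$ by $|\mathbb{S}^n|$. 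Your route yields the explicit bound $T^{\ast}\le\tfrac{1}{n}\log(|\mathbb{S}^n|/|\Sigma_0|)$ and avoids invoking the decay of $\max F$; the paper's route is shorter and makes transparent that it is really the motion of the boundary in $\mathbb{S}^n$ that terminates the flow. Both arguments genuinely use $\theta\le\tfrac{\pi}{2}$---yours for the sign of the $\cot\theta$ boundary term, the paper's implicitly through the setup in which Lemma~\ref{upper-F-H} and Lemma~\ref{Lem-pconvex} were proved.
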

\begin{proof}
	Since the flow \eqref{Inverse-flow} induced a normal hypersurface flow $\partial\Sigma_t\subset\mathbb{S}^n$ with normal speed $\frac{1}{F\sin\theta}$, and since $F$ admits a uniform upper bound, it follows that the speed for $\partial\Sigma_t\subset\mathbb{S}^n$ has a uniform positive lower bound.  Consequently, the limiting hypersurface $\partial\Sigma_{\bar{T}}$ is reached at finite time.
\end{proof}
\section{Convergence to a flat ball with capillary boundary}\label{Sec-Con}
In this section, we conclude that there exists a constant vector $e\in\mathbb{S}^n$, such that the flow \eqref{Inverse-flow} converges to $C_{\theta,\infty}(e)$. For this purpose, we prove the following proposition.
\begin{prop}
		Assume that $\Sigma\subset\bar{\mathbb{B}}^{n+1} (n\geq 2)$ is a properly embedded, strictly convex smooth hypersurface with capillary boundary supported on $\mathbb{S}^n$ at a contact angle $\theta\in(0,\frac{\pi}{2}]$. Let $\Sigma_t,t\in[0,T)$ be the solution of the flow \eqref{Inverse-flow} starting from $\Sigma$. Assume that $\Sigma_{T}$ is not a flat ball around some point $e\in\mathbb{S}^n$, then there holds
		\begin{equation}
			T^{*}>T+\epsilon,
		\end{equation}
	where $\epsilon>0$ is a constant depending on $\Sigma$ and the distance between $\partial\Sigma_T$ and some suitable $\partial C_{\theta,\infty}(e)$.
\end{prop}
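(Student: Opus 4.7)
The plan is to promote the a priori estimates of Section \ref{Sec-cur} to a uniform $C^{2+\alpha,1+\alpha/2}$-bound on $[0,T]$, and then to continue the solution past $t=T$ by a uniform amount via short-time existence for the capillary inverse curvature flow.

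First I would note that, since $\Sigma_T$ is not a flat ball, Lemma \ref{lem-rigidity} supplies a unit vector $e\in \mathrm{int}(\widehat{\partial\Sigma_T})$ with $\partial\Sigma_T\subset \mathrm{int}(\widehat{\partial C_{\theta,\infty}}(e))$; by continuity this inclusion persists, with a uniform gap, on all of $[0,T]$. Lemma \ref{Lem-es-height} would then yield a uniform lower bound for $\langle x,e\rangle$ on the whole interval, Lemma \ref{lowbound-F} a uniform positive lower bound for $F$, Lemma \ref{upper-F-H} uniform upper bounds for $F$ and $H$, Lemma \ref{uniform elliptic} uniform ellipticity constants for $\dot F^{ij}$, and the auxiliary-function argument $\phi=\log\tilde H-\beta\log\langle x,e\rangle-\alpha t$ used in the proof of Lemma \ref{Lem-pconvex} (applied on the closed subinterval $[0,T]$) a uniform upper bound for $\tilde H=\sum 1/\kappa_i$, hence a uniform positive lower bound for the principal curvatures. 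All of the resulting constants depend only on $\Sigma$ and on $\mathrm{dist}(\partial\Sigma_T,\partial C_{\theta,\infty}(e))$.

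Next, I would write $\Sigma_t$ locally as a graph over a fixed reference convex cap and recast the flow as a uniformly parabolic fully nonlinear scalar equation for the graph function, subject to the nonlinear oblique (capillary) boundary condition. Since $F$ is concave, combining the Evans-Krylov interior estimate with Lieberman's boundary $C^{2,\alpha}$ regularity for concave, uniformly oblique parabolic problems should upgrade the above $C^{2}$ bounds to a uniform $C^{2+\alpha,1+\alpha/2}(\bar{\mathbb B}^n\times[0,T])$ estimate; parabolic Schauder bootstrapping then yields uniform $C^k$ estimates for every $k$ away from $t=0$.

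With these uniform estimates in hand, I would take $x(\cdot,T)$ as initial data and invoke a short-time existence theorem for the capillary inverse curvature flow (in the spirit of \cite{Lambert-Scheuer2016,Weng-Xia2022}) to produce a smooth extension on $[T,T+\epsilon]$ for some $\epsilon>0$ depending only on the above data; strict convexity propagates on a sufficiently short time interval by continuity of the principal curvatures, so $T^{*}\geq T+\epsilon$. The step I expect to be the main obstacle is the uniform boundary $C^{2,\alpha}$ estimate in the capillary setting: the nonlinear oblique boundary condition must be coupled to the capillary identities of Proposition \ref{boundary-h-property}, and it is precisely at this point that the uniform ellipticity of $F$ from Lemma \ref{uniform elliptic} becomes essential; without the non-degeneracy hypothesis on $\partial\Sigma_T$ the ellipticity constants would degenerate and the whole extension argument would break.
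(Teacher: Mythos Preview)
Your overall strategy is correct and matches the paper's: collect the uniform $C^2$ estimates of \S\ref{Sec-cur} under the non-degeneracy hypothesis, reduce to a uniformly parabolic scalar equation with a uniformly oblique boundary condition, invoke higher-order regularity (Ural'tseva/Lieberman), and extend past $T$ by short-time existence. The specific lemmas you cite and the dependence of all constants on $\Sigma$ and on $\mathrm{dist}(\partial\Sigma_T,\partial C_{\theta,\infty}(e))$ are exactly the ones the paper uses.

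Where your route differs from the paper is in the reduction to a scalar problem. You propose to write $\Sigma_t$ \emph{locally} as a graph over a fixed reference cap and to apply Evans--Krylov together with Lieberman's boundary theory. The paper instead performs a single \emph{global} reduction: it applies the M\"obius transformation $\varphi:\bar{\mathbb B}^{n+1}\to\bar{\mathbb R}_+^{n+1}$ of \eqref{conform-transform}, under which $\Sigma_t$ becomes a star-shaped hypersurface in the half-space (this uses $\langle X_e,\nu\rangle>0$, guaranteed by the strict convexity and the choice of $e$), hence a global radial graph $u=\log\rho$ over $\bar{\mathbb S}_+^n$. The flow then becomes the single scalar equation \eqref{Inverse-scalar-flow} with the explicit oblique condition $\nabla^{\mathbb S}_{\partial_\beta}u=-\cos\theta\sqrt{1+|\nabla^{\mathbb S}u|^2}$, and the uniform $C^2$ bound on $u$ is a direct consequence of the curvature pinching from Lemma \ref{Lem-pconvex}. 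At that point one appeals directly to \cite{Ural1991} (or \cite[\S 14]{Lieb96}) for $C^\infty$ estimates and the extension. The advantage of the paper's approach is that the capillary boundary condition, which you correctly flag as the delicate point, is handled once and for all by the conformal map: it becomes a standard uniformly oblique condition on a fixed domain, with no need to patch local charts or to translate the capillary identities of Proposition \ref{boundary-h-property} into graph coordinates. Your approach should also work, but the boundary step you anticipate as ``the main obstacle'' is precisely what the M\"obius reduction is designed to circumvent.
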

\begin{proof}
	Firstly we reduce the flow \eqref{Inverse-flow} to a scalar parabolic flow with oblique boundary condition using the conformal transformation map as in \cites{Scheuer-Wang-Xia2018,Weng-Xia2022}. Since $\partial\Sigma_T$ is not a flat ball, then by Lemma \ref{lem-rigidity}, there exits a vector $e\in \mathrm{int}(\widehat{\partial\Sigma_{T}})$, such that $\partial\Sigma_{T}\subset\mathrm{int}(\widehat{\partial C_{\theta,\infty}}(e))$. Without loss of generality, we assume that $e$ is the $(n+1)$th coordinate vector in $\mathbb{R}^{n+1}$. Consider the following M\"obius transformation:
	\begin{align}\label{conform-transform}
		\varphi: \bar{\mathbb B}^{n+1} \quad &\longrightarrow \quad \bar{\mathbb R}_{+}^{n+1},\notag\\
		(x',x_{n+1})  &\longmapsto  \frac{2(x',0)+(1-|x'|^2-x_{n+1}^2)e}{|x'|^2+(x_{n+1}-1)^2}:=(y',y_{n+1})=\tilde{y},
	\end{align}
	where $x=(x',x_{n+1})$ with $x'=(x_1,\cdots,x_n)\in \mathbb R^{n}$ and $x_{n+1}=\langle x,e\rangle$. Then $\varphi$ maps $\mathbb S^n=\partial \mathbb B^{n+1}$ to $\partial \mathbb R_{+}^{n+1}:=\{(y',y_{n+1})\in \mathbb R^{n+1}:y_{n+1}=0\}$ and
	\begin{align*}
		\varphi^\ast(\delta_{\mathbb R^{n+1}_{+}})=\frac{4}{(|x'|^2+(x_{n+1}-1)^2)^2}\delta_{\mathbb B^{n+1}}:=e^{-2w}\delta_{\mathbb B^{n+1}},
	\end{align*}
	which implies that $\varphi$ is a conformal transformation from $(\bar{\mathbb B}^{n+1},\delta_{\mathbb B^{n+1}})$ to $(\bar{\mathbb R}_{+}^{n+1},\delta_{\mathbb R_+^{n+1}})$. Then a properly embedded hypersurface $\Sigma_t=x(\bar{\mathbb B}^n,t)$ in $(\bar{\mathbb B}^{n+1},\delta_{\mathbb B^{n+1}})$ can be identified with $\widetilde{\Sigma}_t:=\tilde{y}(\bar{\mathbb B}^n,t)$ in $(\bar{\mathbb R}_{+}^{n+1},(\varphi^{-1})^\ast\delta_{\mathbb R_+^{n+1}})$, where $\tilde{y}=\varphi\circ x$.
	
	Note that $X_e$ is a conformal vector field such that $\varphi_{\ast}(X_e)=-\tilde{y}$. For a hypersurface $\Sigma \subset \bar{\mathbb B}^{n+1}$ with capillary boundary $\partial \Sigma\subset \mathbb S^n$, one has
	\begin{align*}
		e^{-2w}\langle X_e,\nu\rangle=\langle \varphi_\ast(X_e),\varphi_\ast(\nu)\rangle=|\varphi_{\ast}(\nu)|\langle \tilde y,\tilde\nu\rangle,
	\end{align*}
	where $|\varphi_\ast(\nu)|=\frac{1}2(|y'|^2+(y_{n+1}+1)^2)$ and $\tilde{\nu}:=-\frac{\varphi_\ast(\nu)}{|\varphi_\ast(\nu)|}$. Hence, the hypersurface $\varphi(\Sigma)$ is star-shaped in $\mathbb R_{+}^{n+1}$ with respect to the origin, i.e., $\langle \tilde{y},\tilde\nu\rangle>0$ on $\varphi(\Sigma)$,  if and only if $\langle X_e,\nu\rangle>0$  on $\Sigma$. In particular, since $\langle X_e,\nu\rangle>0$ on $\Sigma_t$ by \cite[Proposition 2.5]{HWYZ-2023}(see also \cite[Proposition 2.16]{Weng-Xia2022}), the hypersurface $\widetilde{\Sigma}_t$ in $(\-{\mathbb R}_{+}^{n+1},(\varphi^{-1})^\ast\d_{\mathbb R_+^{n+1}})$ can be written as a radial graph over $\-{\mathbb S}^{n}_{+}$.
	
	In $\mathbb R_{+}^{n+1}$, we use the polar coordinate $\tilde{y}=(\rho,z)\in [0,\infty)\times \bar{\mathbb{S}}_+^n$, where $\rho$ is the distance from $\tilde{y}$ to the origin and we write $z=(\b,\xi)\in [0,\frac{\pi}{2}]\times \mathbb S^{n-1}$ for the spherical polar coordinate of $z\in \mathbb S^{n}$. Then
	\begin{align}\label{dn-rho}
		\left\{\begin{aligned}
			\rho^2=&|y'|^2+y_{n+1}^2,\\
			y_{n+1}=&\rho \cos \b,\quad |y'|=\rho \sin \b.
		\end{aligned}   \right.
	\end{align}
	Let $u:=\log \rho$ and $v:=\sqrt{1+|\nabla^{\mathbb S}u|^2}$, where $\nabla^{\mathbb S}$ is the Levi-Civita connection on $\mathbb S^{n}_+$ with respect to the round metric $\s$. Then up to a time-dependent tangential diffeomorphism, one can rewrite the flow \eqref{Inverse-flow} equivalently as the following scalar parabolic equation on $\-{\mathbb S}_{+}^{n}$:
	\begin{equation}\label{Inverse-scalar-flow}
		\left\{\begin{aligned}
			\frac{\partial}{\partial t} u&=\frac{v}{\rho\mathrm{e}^{\omega}}\frac{1}{F(\nabla^2_{\mathbb S} u, \nabla^\mathbb S u, \rho, \b)} \qquad &\text{in}\quad \mathbb S_{+}^n \times[0,T),\\
			\nabla^{\mathbb S}_{\partial_\b}u&= -\cos\t \sqrt{1+|\nabla^{\mathbb S}u|^2}\qquad &\text{on}\quad \partial\mathbb S_+^{n} \times[0,T),\\
			u(\cdot,0)&=u_0(\cdot) \qquad &\text{on} \quad \mathbb S_{+}^n,
		\end{aligned}\right.
	\end{equation}
	where $u_0=\log\rho_{0}$, $\rho_0$ is radial function of the initial hypersurface $\varphi(\Sigma_0)$, $\partial_\b$ is the unit outward normal of $\partial \mathbb S_+^n$ on $\-{\mathbb S}_+^{n}$. 
	
	Since $\partial\Sigma_{T}\subset\mathrm{int}(\widehat{\partial C_{\theta,\infty}}(e))$, by the argument in Lemma \ref{Lem-pconvex}, we see that the maximum of $\phi$ is monotone decreasing along the flow \eqref{Inverse-flow}, which yields an uniform lower bound for principal curvatures of $\Sigma_t$, $t\in[0,T)$. This in turn implies that $u$ is uniformly bounded in $C^2(\mathbb{S}^n_{+}\times[0,T))$ and the scalar equation \eqref{Inverse-scalar-flow} is uniformly parabolic. Note that $|\cos\theta|<1$, the boundary value condition satisfies the uniform oblique property as in \cite{Ural1991}. Then by the parabolic theory in \cite[Theorem 5]{Ural1991} (see also \cite[\S 14]{Lieb96}), we conclude the uniform $C^\infty$-estimates and the solution can be extended to time $t=T$ smoothly, which implies the maximum existence time can be extended beyond time $T$ by a small positive constant $\epsilon$.
\end{proof}
As a corollary, we present a refined estimate of the height function which should be of independent interest in the future.
\begin{thm}
	Let $\Sigma\subset\bar{\mathbb{B}}^{n+1} (n\geq 2)$ be a properly embedded, strictly convex smooth hypersurface with capillary boundary supported on $\mathbb{S}^n$ at a contact angle $\theta\in(0,\frac{\pi}{2}]$, given by an embedding $x:\bar{\mathbb{B}}^n\to\bar{\mathbb{B}}^{n+1}$. Moreover, assume that the inverse curvature flow \eqref{Inverse-flow} with $F=\frac{E_{k}}{E_{k-1}}(1\leq k\leq n+1)$, starting from $\Sigma$ converges to the flat ball $C_{\theta,\infty}(e)$ around $e$.
	Then, the height function on $\Sigma$ satisfies
	\begin{align}\label{est-height}
		\langle x,e\rangle-\cos\theta\geq \Lambda\frac{\log(	W_{k,\theta}(\widehat{C}_{\theta,\infty}(e))+C_{n,k,\theta})-\log(	W_{k,\theta}(\widehat{\Sigma})+C_{n,k,\theta})}{(n+1-k)\max_{\Sigma}F},
 	\end{align}
 	where $\Lambda$ is a constant only depending on the distance of $e$ to $\partial\Sigma$, $C_{n,k,\theta}$ is a constant depending on $n,k$ and $\theta$.
\end{thm}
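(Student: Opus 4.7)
The plan is to combine a Gronwall-type lower bound on the flow duration $T^{*}$ (the time at which $\Sigma_t$ reaches $C_{\theta,\infty}(e)$) with a lower bound on the rate at which the height $\langle x,e\rangle$ drops along the flow.

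First I would analyze the evolution of $W_{k,\theta}(\widehat{\Sigma}_t)$. Applying the variational formula \eqref{s2.dW} with $f=\tfrac{1}{F}=\tfrac{E_{k-1}}{E_k}$ gives
\begin{equation*}
\frac{d}{dt} W_{k,\theta}(\widehat{\Sigma}_t) = \frac{n+1-k}{n+1}\int_{\Sigma_t} E_{k-1}\, d\mu_t.
\end{equation*}
The defining relation \eqref{s2:quermassintegral-2} expresses $\int_{\Sigma_t} E_{k-1}\, d\mu_t$ as $(n+1)W_{k,\theta}(\widehat{\Sigma}_t)$ plus a linear combination of the spherical quermassintegrals $W_\ell^{\mathbb{S}^n}(\widehat{\partial\Sigma}_t)$ for $0\le\ell\le k-1$. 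Each such quermassintegral is uniformly controlled by a constant depending only on $n,k,\theta$, since for $\theta\le\tfrac{\pi}{2}$ the convex body $\widehat{\partial\Sigma}_t\subset\mathbb{S}^n$ is contained in a geodesic hemisphere. This produces a constant $C_{n,k,\theta}>0$ and a scalar differential inequality
\begin{equation*}
\frac{d}{dt}\bigl[W_{k,\theta}(\widehat{\Sigma}_t) + C_{n,k,\theta}\bigr] \leq (n+1-k)\bigl[W_{k,\theta}(\widehat{\Sigma}_t) + C_{n,k,\theta}\bigr].
\end{equation*}
Gronwall's inequality integrated from $0$ to $T^{*}$, together with $W_{k,\theta}(\widehat{\Sigma}_{T^{*}}) = W_{k,\theta}(\widehat{C}_{\theta,\infty}(e))$, gives
\begin{equation*}
T^{*} \;\geq\; \frac{\log\!\bigl(W_{k,\theta}(\widehat{C}_{\theta,\infty}(e)) + C_{n,k,\theta}\bigr) - \log\!\bigl(W_{k,\theta}(\widehat{\Sigma}) + C_{n,k,\theta}\bigr)}{n+1-k}.
\end{equation*}

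Second, I would bound the decay rate of $\langle x,e\rangle$ along the flow. Let $h(t)=\min_{\Sigma_t}\langle x,e\rangle$. At an interior minimizer $x_e(t)\in\Sigma_t$, the full surface gradient of $\langle\cdot,e\rangle$ vanishes, so $e$ is normal to $\Sigma_t$ and, by the chosen orientation, $\nu(x_e(t))=-e$. Since $\mathcal{T}$ is tangential, it contributes nothing to the height rate at $x_e$, and the envelope theorem yields
\begin{equation*}
h'(t) \;=\; \frac{1}{F(x_e(t),t)}\langle\nu(x_e(t)),e\rangle \;=\; -\frac{1}{F(x_e(t),t)} \;\leq\; -\frac{1}{\max_\Sigma F},
\end{equation*}
using Lemma \ref{upper-F-H} to propagate the upper bound on $F$. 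Integrating and noting $h(T^{*})=\cos\theta$ on the flat ball, together with $\langle x,e\rangle\ge h(0)$ for every $x\in\Sigma$, gives \eqref{est-height} with $\Lambda=1$ in the interior regime.

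The main obstacle is the complementary case, in which the minimizer of $\langle\cdot,e\rangle$ on $\Sigma_t$ lies on $\partial\Sigma_t$. There $\nu$ is no longer parallel to $-e$, and only the fraction $-\langle\nu,e\rangle$ of the normal speed $1/F$ contributes to lowering the height. Using the capillary identity \eqref{normal transform}, $\nu=-\cos\theta\,\bar{N}+\sin\theta\,\bar\nu$ along $\partial\Sigma_t$, so this fraction is controlled from below by the spherical distance from $e$ to $\partial\Sigma_t$; since $\partial\Sigma_t$ remains nested between $\partial\Sigma$ and $\partial C_{\theta,\infty}(e)$ (via Lemma \ref{lem-rigidity} and the monotonicity worked out in Lemma \ref{Lem-pconvex}), that distance is in turn bounded below by a positive constant depending only on $\mathrm{dist}(e,\partial\Sigma)$. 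Absorbing this alignment factor into $\Lambda$ upgrades the interior estimate to the full statement.
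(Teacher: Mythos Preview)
Your first half—the Gronwall bound on $T^{*}$ via the variational formula \eqref{s2.dW} and the definition \eqref{s2:quermassintegral-2}—is exactly what the paper does.

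For the second half the paper takes a more direct route. Rather than tracking $h(t)=\min_{\Sigma_t}\langle\cdot,e\rangle$ and splitting into interior and boundary minimizers, it fixes an arbitrary point $x\in\Sigma$, follows it along the normal flow, and simply writes
\[
\cos\theta-\langle x,e\rangle=\int_0^{T^*}\frac{\langle\nu,e\rangle}{F}\,dt\le -\frac{\Lambda}{\max_\Sigma F}\,T^*.
\]
The single input needed is the uniform bound $\langle\nu,e\rangle\le-\Lambda$ on \emph{all} of $\Sigma_t$, which is quoted from \cite[Lemma~11]{Lambert-Scheuer2016}; since the flow is expanding one has $\mathrm{dist}(e,\partial\Sigma_t)\ge\mathrm{dist}(e,\partial\Sigma)$, so $\Lambda$ depends only on the initial distance. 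No case distinction is required.

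Your approach works, but the boundary case is not quite complete as written. First, at a boundary minimizer the tangential contribution $\langle\mathcal{T},e\rangle=\tfrac{\cot\theta}{F}\langle\mu,e\rangle$ does not vanish; it happens to have the right sign (because $\langle\mu,e\rangle\le0$ at such a minimizer), but this needs to be said. Second, the claim that $-\langle\nu,e\rangle$ is bounded below in terms of $\mathrm{dist}(e,\partial\Sigma_t)$ is exactly the content of the Lambert--Scheuer lemma; your sketch does not actually derive it from \eqref{normal transform}, and the lemmas you cite (Lemma~\ref{lem-rigidity}, Lemma~\ref{Lem-pconvex}) give neither the nesting $\widehat{\partial\Sigma}\subset\widehat{\partial\Sigma_t}$ nor this alignment bound—the nesting comes simply from the flow being expanding. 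So the route is sound, but it ultimately rests on the same external lemma that the paper invokes directly, making the min-tracking and case analysis an unnecessary detour.
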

\begin{proof}
	From the initial hypersurface $\Sigma$, we start the inverse curvature flow \eqref{Inverse-flow} with $F=\frac{E_{k}}{E_{k-1}}$. Then,  according to \eqref{s2.dW}, $W_{k,\theta}$ evolves in time by
	\begin{align*}
	\frac{d}{dt}W_{k,\theta}(\widehat{\Sigma}_t)=\frac{n+1-k}{n+1}\int_{\Sigma_t}E_{k-1}d\mu_t\leq(n+1-k)(W_{k,\theta}(\widehat{\Sigma}_t)+C_{n,k,\theta}),
	\end{align*}
	where the inequality follows from the definition of $W_{k,\theta}$ and the fact $0\leq W_{\ell,\theta}(\widehat{\Sigma}_t)\leq W_{\ell,\theta}(\widehat{C}_{\theta,\infty})$, $C_{n,k,\theta}$ is a constant depending only on $n,k$ and $\theta$. Thus, we have the estimate
	\begin{align*}
		W_{k,\theta}(\widehat{\Sigma}_t)+C_{n,k,\theta}\leq (W_{k,\theta}(\widehat{\Sigma})+C_{n,k,\theta})e^{(n+1-k)t}.
	\end{align*}
	Since the flow \eqref{Inverse-flow} converges to the flat ball $\widehat{C}_{\theta,\infty}(e)$, the maximal existence time $T^*$ is at least
	\begin{align}\label{est-T-eq1}
		T^*\geq\frac{1}{n+1-k}\left(\log(	W_{k,\theta}(\widehat{C}_{\theta,\infty})+C_{n,k,\theta})-\log(	W_{k,\theta}(\widehat{\Sigma})+C_{n,k,\theta})\right)
	\end{align} 
	On the other hand, along the flow \eqref{Inverse-flow} with $F=\frac{E_{k}}{E_{k-1}}$,
	\begin{align*}
		\frac{d}{dt}\langle x,e\rangle=\frac{\langle\nu,e\rangle}{F}.
	\end{align*}
	Using the argument in \cite[Lemma 11]{Lambert-Scheuer2016}, we know that there exists a positive constant $\Lambda$ depending only on the distance of $e$ to $\partial\Sigma$ such that $\langle\nu,e\rangle\leq -\Lambda$. Therefore, by the flow \eqref{Inverse-flow} converges to the flat ball $\widehat{C}_{\theta,\infty}(e)$, we derive
	\begin{align}\label{est-T-eq2}
		\cos\theta-\langle x,e\rangle =\int_{0}^{T^*}\frac{d}{dt}\langle x,e\rangle dt=\int_{0}^{T^*}\frac{\langle\nu,e\rangle}{F}dt\leq-\frac{\Lambda}{\max_{\Sigma}F}T^*,
	\end{align}
	where the inequality follows from Lemma \ref{upper-F-H} and the distance of $e$ to $\partial\Sigma_t$ is increasing along the expanding flow \eqref{Inverse-flow}. Thus, combining \eqref{est-T-eq1} with \eqref{est-T-eq2} gives the estimate \eqref{est-height}.
\end{proof}

\section{Proof of Theorem \ref{Thm-A F}}\label{Sec-A F}
In this section, we prove a family of Alexandrov-Fenchel inequalities \eqref{In-A-F} for weakly convex hypersurfaces with free boundary in $\bar{\mathbb{B}}^{n+1}$ using the inverse mean curvature flow. We review the definition of quermassintegral for weakly convex hypersurfaces with free boundary here by taking $\theta=\frac{\pi}{2}$ in \S\ref{sub-sec2.2}.
\begin{align}
	W_0(\widehat{\Sigma})&=|\widehat{\Sigma}|,\quad W_1(\widehat{\Sigma})=\frac{1}{n+1}|\Sigma|,\label{Defn-W01}\\
	W_k(\widehat{\Sigma})&=\frac{1}{n+1}\int_{\Sigma}{E_{k-1}}\,dA+\frac{1}{n+1}\frac{k-1}{n-k+2}W_{k-2}^{\mathbb{S}^n}(\widehat{\partial\Sigma}),\quad \forall 2\leq k\leq n+1.\label{Defn-Wk}
\end{align}
Firstly, we need the following useful lemmas.
\begin{lem}\label{Lem-5.1}
	For any strictly convex hypersurface $\Sigma$ in $\bar{\mathbb{B}}^{n+1}$ with free boundary, there holds
	\begin{equation}\label{In-xnu}
		\langle x,\nu\rangle\leq 0.
	\end{equation}
\end{lem}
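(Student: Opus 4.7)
The plan is to combine the convexity of the enclosed region $\widehat{\Sigma}$ with a maximum-principle argument for the function $g(x) = \langle x,\nu(x)\rangle$ on $\Sigma$.

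First I would verify that $\widehat{\Sigma}$ is a Euclidean convex body. Its boundary splits as $\Sigma\cup\widehat{\partial\Sigma}$: on $\Sigma$ the second fundamental form with respect to $\nu$ is positive definite by strict convexity, while on $\widehat{\partial\Sigma}\subset\mathbb S^n$ the outward normal of $\widehat{\Sigma}$ is $\bar N=x$ and the second fundamental form of $\mathbb S^n$ in this direction is just the induced metric, hence positive. Along $\partial\Sigma$ the two pieces meet at the free-boundary dihedral angle $\tfrac{\pi}{2}$, a convex corner, so $\widehat{\Sigma}$ is convex. Using $\langle\bar N,\nu\rangle=0$ and $\bar N=x$ on $\partial\Sigma$, one has $g\equiv 0$ on $\partial\Sigma$ and each tangent plane $T_p\Sigma$ passes through the origin at $p\in\partial\Sigma$, so the supporting half-space condition forces $0\notin\mathrm{int}(\widehat{\Sigma})$.

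Next I would compute the inward boundary derivative. From the Weingarten identity $\partial_j\nu=h_j^{\,k}x_k$ (paper's sign convention) one gets $\partial_j g = h_j^{\,k}\langle x,x_k\rangle$; along $\partial\Sigma$ the conormal $\mu$ equals $\bar N=x$ by free boundary and is a principal direction by Proposition~\ref{boundary-h-property}, yielding
\[
\nabla_\mu g\bigl|_{\partial\Sigma}=h_{\mu\mu}\,\langle x,\mu\rangle=h_{\mu\mu}>0,
\]
so $g<0$ strictly in an inward collar of $\partial\Sigma$.

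Now suppose for contradiction that $g$ attains a positive maximum at some interior $p\in\mathrm{int}(\Sigma)$. Vanishing of $\nabla g(p)$ with invertibility of $h_j^{\,k}$ forces $\langle x(p),x_k(p)\rangle=0$, which combined with $g(p)>0$ gives $x(p)=g(p)\,\nu(p)$, so $\nu(p)=p/|p|$ and $|p|=g(p)<1$. Using $\partial_i x_k=\Gamma_{ik}^l x_l-h_{ik}\nu$, the covariant Hessian at this critical point is
\[
\nabla_i\nabla_j g(p)=h_{ij}(p)-g(p)\,(h^2)_{ij}(p),
\]
and $\nabla^2 g(p)\le 0$ translates into $\kappa_i(p)\ge 1/|p|$ for every principal curvature.

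The contradiction should come from comparing two consequences. Convexity of $\widehat{\Sigma}$ and the supporting plane $T_p\Sigma=\{\langle y,p\rangle=|p|^2\}$ give $\widehat{\Sigma}\subset\{\langle y,p\rangle\le|p|^2\}$; since $|p|<1$ we have $|p|^2<|p|$, and hence $\langle p/|p|,p\rangle=|p|>|p|^2$, so that $p/|p|\notin\widehat{\Sigma}$ and in particular $p/|p|\notin\widehat{\partial\Sigma}$. On the other hand, one expects that the radial projection $\pi(q):=q/|q|$ of any interior point $q\in\mathrm{int}(\Sigma)$ lies in $\widehat{\partial\Sigma}$: indeed $\pi$ is the identity on $\partial\Sigma$, and the boundary behavior $\nabla_\mu g>0$ (i.e.\ $g<0$ in the collar) provides the transversality control which, combined with connectedness of $\pi(\mathrm{int}(\Sigma))$ and a degree argument for $\pi\colon\Sigma\to\mathbb S^n$, forces $\pi(\mathrm{int}(\Sigma))\subset\widehat{\partial\Sigma}$. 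This contradicts $p/|p|\notin\widehat{\partial\Sigma}$.

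The main obstacle I expect is this last step: rigorously justifying that the radial projection of $\mathrm{int}(\Sigma)$ lies in $\widehat{\partial\Sigma}$. This is essentially equivalent to the lemma itself, rephrased as the absence of double intersections of origin-rays with $\Sigma$, and making the degree argument precise — in particular handling the rank drop of $d\pi$ along $\partial\Sigma$ caused by $d\pi(\mu)=0$ — is the most delicate piece.
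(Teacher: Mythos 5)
Your first- and second-order computations at an interior positive maximum are correct (at such a point $p$ strict convexity forces $\langle x,x_k\rangle=0$, hence $x(p)=g(p)\nu(p)$ with $\nu(p)=p/|p|$ pointing radially \emph{outward}, and $\nabla_i\nabla_j g=h_{ij}-g(h^2)_{ij}$), but the argument does not close. The contradiction you propose rests on the claim that the radial projection $q\mapsto q/|q|$ maps $\mathrm{int}(\Sigma)$ into $\widehat{\partial\Sigma}$, and, as you yourself note, this is just a reformulation of the statement $\langle x,\nu\rangle\le 0$ (absence of second intersections of origin-rays with $\Sigma$); invoking it is circular, and the sketched degree argument is not carried out (and is genuinely delicate because $d\pi(\mu)=0$ along $\partial\Sigma$). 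Note also that the supporting-hyperplane information at $p$ alone cannot yield a contradiction: $\widehat{\Sigma}\subset\{\langle y,p\rangle\le|p|^2\}$ is perfectly compatible with $\widehat{\partial\Sigma}$ lying in the large spherical cap $\{y\in\mathbb S^n:\langle y,p\rangle\le|p|^2\}$, so some directional input beyond convexity is unavoidable.

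The missing ingredient is exactly the one the paper uses: by Lambert--Scheuer (Lemmas 11 and 12 of \cite{Lambert-Scheuer2016}, i.e.\ the free-boundary case of Lemma \ref{Lem-es-height} together with the normal estimate), there is a direction $e\in\mathrm{int}(\widehat{\partial\Sigma})$ and constants $c_0,\delta_0>0$ with $\langle\nu,e\rangle\le-c_0$ and $\langle x,e\rangle\ge\delta_0$ on all of $\Sigma$. At your critical point the collinearity $x(p)\parallel\nu(p)$ combined with these two sign conditions forces $x(p)=-|x(p)|\,\nu(p)$ (your alternative $\nu(p)=+p/|p|$ is excluded because it would give $\langle p,e\rangle<0$), whence $\langle x(p),\nu(p)\rangle=-|x(p)|\le-\delta_0<0$, contradicting the assumed positive maximum. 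With this substitution the proof is complete and in fact shorter: neither the convexity of $\widehat{\Sigma}$, nor the Hessian inequality $\kappa_i\ge 1/|p|$, nor any projection/degree argument is needed — only the vanishing of $\nabla g$ at an interior maximum plus the uniform bounds on $\langle\nu,e\rangle$ and $\langle x,e\rangle$.
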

\begin{proof}
It follows from \cite[Lemma 11, Lemma 12]{Lambert-Scheuer2016} that there exists $e\in\mathrm{int}(\widehat{\partial\Sigma})$ such that $\partial\Sigma$ lies on the open upper hemisphere centered at $e$. Moreover, there exist constants $c_0>0$ depending on the distance of $e$ to $\partial\Sigma$ and $\delta_0>0$ depending on $c_0$, the length of second fundamental form of $\Sigma$ and the distance of $\partial\Sigma$ to the equator $\mathcal{S}(e)$ such that
\begin{equation}\label{Bounds-1}
	\langle\nu,e\rangle\leq -c_0,\quad \langle x,e\rangle\geq \delta_0.
\end{equation}
Note that $\langle x,\nu\rangle\equiv 0$ on $\partial\Sigma$. If $\langle x,\nu\rangle$ attains its positive maximum at some interior point $p$, then at  $p$ we have
\begin{equation}
	\langle x,\nu\rangle_i=h_i^k\langle x,e_k\rangle=0.
\end{equation}
Since $\Sigma$ is strictly convex, we have $\langle x,e_k\rangle=0$ for all $k$ and it follow that $x//\nu$ at the point $p$. By \eqref{Bounds-1}, we obtain $x=-|x|\nu$ at the point $p$, and therefore
\begin{equation}
	\langle x(p),\nu(p)\rangle=-|x(p)|\leq -\delta_0,
\end{equation}
which contradicts the assumption that $\langle x,\nu\rangle$ attains its positive maximum at $p$. Thus, for any point on $\Sigma$, we must have $\langle x,\nu\rangle\leq 0$.
\end{proof}

\begin{lem}\label{es-con}
	For any strictly convex hypersurface $\Sigma$ in $\bar{\mathbb{B}}^{n+1}$ with free boundary, there exists a constant $c_1>0$, depending on $n$ and the constant $\delta$ in Lemma \ref{Lem-es-height}, such that
	\begin{equation}\label{In-con1}
		(n+1)W_1(\widehat{\Sigma})-W_1^{\mathbb{S}^n}(\widehat{\partial\Sigma})\leq -c_1.
	\end{equation}
\end{lem}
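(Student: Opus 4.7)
The plan is to first express $(n+1)W_1(\widehat{\Sigma})-W_1^{\mathbb{S}^n}(\widehat{\partial\Sigma})$ as a Minkowski-type integral, which combined with Lemma \ref{Lem-5.1} gives the non-strict version, and then to upgrade this to the quantitative strict inequality by a contradiction/compactness argument. By \eqref{Defn-W01} and \eqref{W_1}, $(n+1)W_1(\widehat{\Sigma})=|\Sigma|$ and $W_1^{\mathbb{S}^n}(\widehat{\partial\Sigma})=|\partial\Sigma|/n$. A direct computation on $\Sigma$ gives $\mathrm{div}_\Sigma(x-\langle x,\nu\rangle\nu)=n-H\langle x,\nu\rangle$ (using $\mathrm{div}_\Sigma(x)=n$ and $\mathrm{div}_\Sigma(\nu)=H$). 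Integrating, the divergence theorem together with the free-boundary relation $\mu=\bar{N}\circ x=x$ on $\partial\Sigma$ (read off \eqref{normal transform-II} at $\theta=\pi/2$), which also gives $\langle x,\mu\rangle=|x|^2=1$, yields
\[
(n+1)W_1(\widehat{\Sigma})-W_1^{\mathbb{S}^n}(\widehat{\partial\Sigma})=\frac{1}{n}\int_\Sigma H\langle x,\nu\rangle\,d\mu.
\]
By Lemma \ref{Lem-5.1} we have $\langle x,\nu\rangle\le 0$, and strict convexity gives $H>0$, so the right-hand side is $\le 0$, which is \eqref{In-con1} with some nonnegative $c_1$.

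To promote this to a uniform strict bound depending only on $n$ and $\delta$, I argue by contradiction. If no such $c_1>0$ exists, then there is a sequence $\Sigma_k$ of strictly convex free-boundary hypersurfaces together with unit vectors $e_k\in \mathbb{S}^n$ supplied by Lemma \ref{Lem-es-height} satisfying $\langle x,e_k\rangle\ge\delta$ on $\Sigma_k$, yet $n|\Sigma_k|-|\partial\Sigma_k|\to 0$. By Blaschke's selection theorem, after passing to a subsequence the convex bodies $\widehat{\Sigma_k}\subset\bar{\mathbb{B}}^{n+1}$ Hausdorff-converge to a convex body $\widehat{\Sigma_\infty}\subset\{y:\langle y,e_\infty\rangle\ge\delta\}$, where $e_\infty=\lim e_k$. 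Since surface area and boundary length are continuous under Hausdorff convergence of convex bodies, $n|\Sigma_\infty|=|\partial\Sigma_\infty|$ at the limit. Combined with $H\ge 0$ weakly and $\langle x,\nu\rangle\le 0$, a rigidity argument (using the free-boundary condition, which persists in the limit) identifies $\Sigma_\infty$ as a totally geodesic great disk $C_{\pi/2,\infty}(e')$ through the origin. But any such disk contains the point $0$, so $\langle 0,e_\infty\rangle=0<\delta$ contradicts the uniform height bound on $\widehat{\Sigma_\infty}$.

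The main obstacle is the rigidity step at the limit: one must classify weakly convex, free-boundary limits saturating the Minkowski identity, when the limit is only known to be a Hausdorff limit of convex bodies (hence $C^{0,1}$ boundary). Standard convex-analytic regularity should reduce this to showing that equality in $H\langle x,\nu\rangle=0$ a.e. combined with the free-boundary condition forces a totally geodesic great disk, but it requires recasting the identity in distributional form. A more direct strategy would localize at the minimum point $p\in\Sigma$ of $\langle x,e\rangle$, where $\nu(p)=-e$ and so $-\langle x(p),\nu(p)\rangle=\langle x(p),e\rangle\ge\delta$, and integrate $H(-\langle x,\nu\rangle)$ over a small neighborhood; however, producing a neighborhood of definite area on which $H$ has a uniform positive lower bound, with everything depending only on $n$ and $\delta$, requires curvature control not available from the hypotheses alone, so the compactness route is the more robust path.
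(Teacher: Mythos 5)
Your first step is fine: the Minkowski-type identity $\mathrm{div}_\Sigma\left(x-\langle x,\nu\rangle\nu\right)=n-H\langle x,\nu\rangle$, the free boundary relation $\mu=x$ on $\partial\Sigma$, and Lemma \ref{Lem-5.1} do give $|\Sigma|-\tfrac1n|\partial\Sigma|=\tfrac1n\int_\Sigma H\langle x,\nu\rangle\,d\mu\le 0$, which is a clean alternative derivation of the non-strict version of \eqref{In-con1}. But the actual content of the lemma is the quantitative bound $\le -c_1$ with $c_1>0$ controlled by $\delta$, and that is exactly the part your proposal does not prove. The compactness/contradiction argument is left open at its decisive point: you yourself flag the rigidity step as "the main obstacle," and as stated it is also not the correct classification. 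Equality in your identity only forces $H\langle x,\nu\rangle=0$, and this is achieved not just by flat great disks but by any cone over $\partial\Sigma_\infty$ with vertex at the origin (on such a cone $\langle x,\nu\rangle\equiv 0$, and it meets $\mathbb{S}^n$ orthogonally); such singular, merely weakly convex cones are precisely the kind of object that can appear as a Hausdorff limit. Moreover, in a Hausdorff limit of convex bodies neither $\nu$, $H$, nor the free-boundary condition survives classically, the limit may degenerate, and the separate continuity of the lateral area $|\Sigma_k|$ and of the $(n-1)$-dimensional quantity $|\partial\Sigma_k|$ under Hausdorff convergence of $\widehat{\Sigma_k}$ is asserted rather than proved. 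So the proposal establishes only the weak inequality; the strict uniform bound, which is what the later applications (Lemma \ref{In-pre}) use, is missing.

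For comparison, the paper avoids compactness and rigidity entirely. It rewrites \eqref{In-con1} as $|\Sigma|-\tfrac1n|\partial\Sigma|\le -c_1$, introduces the cone $C_\Sigma$ over $\partial\Sigma$ with vertex at the origin as in \eqref{Defn-convex cone}, for which $|C_\Sigma|=\tfrac1n|\partial\Sigma|$ by \eqref{eq-re-cone}, and the truncated body $D_\Sigma$ from \eqref{Defn-tildecone} obtained by cutting the cone at height $\delta$ (the height bound of Lemma \ref{Lem-es-height}). Since $\widehat{\Sigma}\subset\widehat{D_\Sigma}\subset\widehat{C_\Sigma}$, monotonicity of the Euclidean quermassintegral $W_1^{\mathbb{R}^{n+1}}$ (i.e.\ of surface area) under inclusion of convex bodies gives $|\Sigma|-|C_\Sigma|\le |D_\Sigma|-|C_\Sigma|=:-c_1<0$, with the deficit coming explicitly from replacing the slanted cone piece below height $\delta$ by the flat slice. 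This yields the strict bound directly, with no curvature control, distributional identities, or limit classification. If you want to salvage your route, the useful observation is that every equality configuration (cone with vertex at the origin) contains the origin, which would contradict the uniform height bound $\langle x,e\rangle\ge\delta$; but turning that into a proof still requires carrying out the nonsmooth rigidity and continuity arguments you have only sketched, whereas the paper's cone comparison gives $c_1$ in one line.
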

\begin{proof}
	By \eqref{W_1} and \eqref{Defn-W01}, \eqref{In-con1} is equivalent to
	\begin{equation}\label{In-con2}
		|\Sigma|-\frac{1}{n}|\partial\Sigma|\leq -c_1.
	\end{equation}
Similarly as in the proof of Lemma \ref{Lem-es-height}, we define the cone $C_{\Sigma}$ as follows:
\begin{equation}\label{Defn-convex cone}
	C_{\Sigma}=\{y\in\mathbb{R}^{n+1}:y=sx, 0\leq s\leq 1,x\in\partial\Sigma\}. 
\end{equation}
Moreover, taking $e$ and $\delta$ as in Lemma \ref{Lem-es-height} and denote
\begin{equation}\label{Defn-tildecone}
	D_{\Sigma}=\left(\widehat{C_{\Sigma}}\cap \{x\in\bar{\mathbb{B}}^{n+1}:\langle x,e\rangle=\delta\}\right)\cup\left(C_{\Sigma}\cap \{x\in\bar{\mathbb{B}}^{n+1}:\langle x,e\rangle\geq\delta\}\right).
\end{equation}
It follows easily that all $\widehat{\Sigma}, \widehat{D_{\Sigma}}, \widehat{C_{\Sigma}}$ are geodesic convex in $\mathbb{R}^{n+1}$ and satisfy $\widehat{\Sigma}\subset \widehat{{D}_{\Sigma}}\subset\widehat{C_{\Sigma}}$. By a direct observation, for the cone $C_{\Sigma}$, there holds
\begin{equation}\label{eq-re-cone}
	\frac{|C_{\Sigma}|}{|\partial C_{\Sigma}|}=\frac{|C_{\Sigma}|}{|\partial\Sigma|}=\frac{|\mathbb{B}^n|}{|\mathbb{S}^{n-1}|}=\frac{1}{n}.
\end{equation}
On the other hand, according to \cite[III.13.2]{San04}, for a geodesic convex domain in $\mathbb{R}^{n+1}$, there holds
\begin{align}
	W_1^{\mathbb{R}^{n+1}}(\widehat{\Sigma})&=\frac{1}{n+1}(|\Sigma|+|\widehat{\partial\Sigma}|),\label{W_1Sigma}\\
	W_1^{\mathbb{R}^{n+1}}(\widehat{D_{\Sigma}})&=\frac{1}{n+1}(|D_{\Sigma}|+|\widehat{\partial\Sigma}|),\label{W_1tildeCSigma}\\
	W_1^{\mathbb{R}^{n+1}}(\widehat{C_{\Sigma}})&=\frac{1}{n+1}(|C_{\Sigma}|+|\widehat{\partial\Sigma}|).\label{W_1CSigma}
\end{align}
Then, combined these with \eqref{eq-re-cone}, we have
\begin{align}
	|\Sigma|-\frac{1}{n}|\partial\Sigma|=&|\Sigma|-|C_{\Sigma}|\notag\\
	=&(n+1)W_1^{\mathbb{R}^{n+1}}(\widehat{\Sigma})-(n+1)W_1^{\mathbb{R}^{n+1}}(\widehat{C_{\Sigma}})\notag\\
	\leq&(n+1)W_1^{\mathbb{R}^{n+1}}(\widehat{D_{\Sigma}})-(n+1)W_1^{\mathbb{R}^{n+1}}(\widehat{C_{\Sigma}})\notag\\
	=&|D_{\Sigma}|-|C_{\Sigma}|\notag\\
	=&-c_1<0,
\end{align}
where in the inequalities we used the monotonicity of quermassintegrals with respect to the set inclusion and the constant $c_1$ depends on the constant $\delta$. This completes the proof of Lemma \ref{es-con}.
\end{proof}
Similarly, for $k\geq 2$, we have the following weaker estimate:
\begin{lem}\label{Lem-con1}
		For any strictly convex hypersurface $\Sigma$ in $\bar{\mathbb{B}}^{n+1}$ with free boundary, and for all $2\leq k\leq n$, there holds
		\begin{equation}
			(n+1)W_k(\widehat{\Sigma})\leq W_k^{\mathbb{S}^n}(\widehat{\partial\Sigma}).
		\end{equation}
\end{lem}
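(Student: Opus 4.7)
The plan is to reduce the claim to an integral comparison between $E_{k-1}$ on $\Sigma$ and $E_{k-1}^{\mathbb{S}^n}$ on $\partial\Sigma$, and then to establish that comparison via a Hsiung--Minkowski identity built from the Newton tensor. Comparing \eqref{Defn-Wk} with the recursion \eqref{W_k}, the term $\tfrac{k-1}{n-k+2}W_{k-2}^{\mathbb{S}^n}(\widehat{\partial\Sigma})$ appears on both sides, so the claimed inequality $(n+1)W_k(\widehat{\Sigma})\leq W_k^{\mathbb{S}^n}(\widehat{\partial\Sigma})$ is equivalent to
\begin{equation*}
\int_\Sigma E_{k-1}\,dA\;\leq\;\frac{1}{n}\int_{\partial\Sigma} E_{k-1}^{\mathbb{S}^n}\,ds.
\end{equation*}

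To prove this, I would consider the tangential vector field $X^i := T_{k-1}^{ij}(x^\top)_j$ on $\Sigma$, where $T_{k-1}$ is the $(k-1)$-th Newton tensor of $\Sigma$ and $x^\top$ is the tangential projection of the position vector $x$. The Newton tensor is divergence-free in $\mathbb{R}^{n+1}$ by the Codazzi equation, and a direct computation gives $\nabla_i(x^\top)_j = g_{ij}-\langle x,\nu\rangle h_{ij}$; together with the standard traces $\mathrm{tr}(T_{k-1}) = (n-k+1)\binom{n}{k-1}E_{k-1}$ and $T_{k-1}^{ij}h_{ij} = k\binom{n}{k}E_k$, this yields
\begin{equation*}
\mathrm{div}_\Sigma X = (n-k+1)\binom{n}{k-1}E_{k-1} \;-\; k\binom{n}{k}\langle x,\nu\rangle E_k.
\end{equation*}
The divergence theorem converts the integral of the left-hand side into $\int_{\partial\Sigma} T_{k-1}(x^\top,\mu)\,ds$. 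For free boundary, \eqref{normal transform} at $\theta=\pi/2$ gives $\mu=\bar N\circ x = x$ and $\langle x,\nu\rangle=0$ along $\partial\Sigma$, so $x^\top = \mu$ there and the boundary integrand equals $T_{k-1}(\mu,\mu)$. Proposition \ref{boundary-h-property}(1) makes $h_i^j$ block-diagonal with respect to $\mu$ along $\partial\Sigma$, and part (2) with $\theta=\pi/2$ identifies $h_{\alpha\beta}=\widehat{h}_{\alpha\beta}$; an elementary computation in the block form then gives $T_{k-1}(\mu,\mu) = \binom{n-1}{k-1}E_{k-1}^{\mathbb{S}^n}$, where the symmetric function on the right is evaluated on the principal curvatures of $\partial\Sigma\subset\mathbb{S}^n$.

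To conclude, Lemma \ref{Lem-5.1} gives $\langle x,\nu\rangle\leq 0$, and strict convexity gives $E_k\geq 0$, so the $\langle x,\nu\rangle E_k$ contribution on the left has the right sign to be dropped. The resulting inequality, divided through by $(n-k+1)\binom{n}{k-1}$ and simplified via $\binom{n-1}{k-1}\big/\big[(n-k+1)\binom{n}{k-1}\big]=\tfrac{1}{n}$, is precisely the desired comparison. I expect the main obstacle to be bookkeeping rather than concept: one must get the sign in $\nabla_i(x^\top)_j$ correct (so that Lemma \ref{Lem-5.1} pushes the estimate in the intended direction rather than the opposite one), and carefully extract $T_{k-1}(\mu,\mu) = \binom{n-1}{k-1}E_{k-1}^{\mathbb{S}^n}$ from the block structure of $h_i^j$ along $\partial\Sigma$ furnished by Proposition \ref{boundary-h-property}.
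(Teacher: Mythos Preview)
Your proposal is correct and follows essentially the same approach as the paper: the paper works with the vector field $\dot{E}_k^{ij}\langle x,e_i\rangle e_j$, which is precisely (up to the normalizing constant $\binom{n}{k}^{-1}$) your Newton tensor field $T_{k-1}^{ij}(x^\top)_j$, and then uses Lemma~\ref{Lem-5.1} and Proposition~\ref{boundary-h-property} in the same way to obtain $\int_{\partial\Sigma}E_{k-1}^{\mathbb{S}^n}\,ds\geq n\int_\Sigma E_{k-1}\,dA$. The only difference is notational.
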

\begin{proof}
Let $\{e_i\}_{i=1}^n$ be a local orthonormal basis of $T\Sigma$. Note that for $2\leq k\leq n$, there holds
\begin{align}
	&\mathrm{div}_{\Sigma}(\dot{E}_k^{ij}\langle x,e_i\rangle e_j)\notag\\
	=&\dot{E}_k^{ij}(g_{ij}-\langle x,\nu\rangle h_{ij})\notag\\
	=&kE_{k-1}-k\langle x,\nu\rangle E_k\notag\\
	\geq&kE_{k-1},\label{DivE_k}
\end{align}
where in the last inequality we  used Lemma \ref{Lem-5.1}. Hence we obtain
\begin{align}
	&\int_{\Sigma}{\mathrm{div}_{\Sigma}(\dot{E}_k^{ij}\langle x,e_i\rangle e_j)}\,dA\notag\\
	=&\int_{\partial\Sigma}{\dot{E}_k^{ij}\langle x,e_i\rangle\langle\mu,e_j\rangle}\,ds\notag\\
	=&\int_{\partial\Sigma}{\dot{E}_k^{\mu\mu}}\,ds=\binom{n}{k}^{-1}\int_{\partial\Sigma}{\sigma_{k-1}(\kappa|h_{\mu\mu})}\,ds\notag\\	
	=&\frac{k}{n}\int_{\partial\Sigma}{E_{k-1}^{\mathbb{S}^n}}\,ds\notag\\
	\geq&k\int_{\Sigma}{E_{k-1}}\,dA,
\end{align}
which implies
\begin{equation}\label{Ineq-Int}
	\int_{\partial\Sigma}{E_{k-1}^{\mathbb{S}^n}}\,ds\geq n\int_{\Sigma}{E_{k-1}}\,dA.
\end{equation}
Combining \eqref{W_k}, \eqref{Defn-Wk} with \eqref{Ineq-Int} completes the proof of Lemma \ref{Lem-con1}.
\end{proof}

Lambert and Scheuer \cite[Lemma 3.4]{Lambert-Scheuer2017} proved that for a weakly convex hypersurface $\Sigma$ with free boundary in $\bar{\mathbb{B}}^{n+1}$, $|\Sigma|$ has a positive upper bound depending on $\partial\Sigma$. Precisely, they proved the following estimate.

\begin{lem}[\cite{Lambert-Scheuer2017}]\label{lem-area-L}
	Let $\Sigma$ be a weakly convex hypersurface with free boundary in $\bar{\mathbb{B}}^{n+1}$ such that $\partial\Sigma$ is not an equator. Then there holds
	\begin{equation}
		|\Sigma|\leq b_n-c_{\partial\Sigma},
	\end{equation}
	where $b_n=\vert\mathbb{B}^n\vert$ is the volume of the  n-dimensional unit disk $\mathbb{B}^n$,  $c_{\partial\Sigma}$ is a constant only depending on the outer radius of $\partial\Sigma\subset\mathbb{S}^n$, in the sense that it tends to zero only if the outer radius tends to $\frac{\pi}{2}$.
\end{lem}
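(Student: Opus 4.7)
The plan is to deduce the bound from two inputs: the comparison inequality of Lemma~\ref{es-con}, which gives $|\Sigma|-\tfrac{1}{n}|\partial\Sigma|\le -c_1$, and the spherical perimeter bound $|\partial\Sigma|\le \omega_{n-1}$. Since $b_n=\omega_{n-1}/n$, these immediately yield $|\Sigma|\le b_n-c_1$, so the constant $c_{\partial\Sigma}$ will be extracted from the deficit $c_1$ of Lemma~\ref{es-con} and its geometric dependence on the outer radius of $\partial\Sigma$.

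The first input, Lemma~\ref{es-con}, is stated for strictly convex $\Sigma$, so I would first extend it to weakly convex $\Sigma$ by approximation. The proof there sandwiches $\widehat{\Sigma}$ between the truncated cone $D_\Sigma$ and the radial cone $C_\Sigma$, and uses the monotonicity of the Euclidean quermassintegral $W_1^{\mathbb{R}^{n+1}}$ under set inclusion of convex bodies; the only step requiring strict convexity is the positive height bound $\langle x,e\rangle\ge \cos\theta+\delta$ supplied by Lemma~\ref{Lem-es-height}. For weakly convex $\Sigma$ with $\partial\Sigma$ not an equator, I would approximate $\Sigma$ by strictly convex free-boundary hypersurfaces $\Sigma_\varepsilon$ (for example by slightly inflating $\widehat{\Sigma}$ inside $\bar{\mathbb B}^{n+1}$ and smoothing the resulting hypersurface to preserve the orthogonal contact angle), whose outer radii $R_\varepsilon$ converge to $R<\pi/2$; apply the strictly convex version of the comparison with deficit $c_1^\varepsilon>0$; and pass to the limit. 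The resulting deficit $c_1>0$ for $\Sigma$ persists because the height constant $\delta$ in Lemma~\ref{Lem-es-height} depends continuously on data and has a strictly positive lower limit, controlled from below by the spherical distance between $\partial\Sigma$ and the equator $\partial C_{\pi/2,\infty}(e)$.

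The second input is classical convex geometry in $\mathbb{S}^n$: since $\widehat{\partial\Sigma}$ is a convex body with outer radius $R<\pi/2$, it is contained in a closed hemisphere, and perimeter is monotone under inclusion among convex bodies of $\mathbb{S}^n$, giving $|\partial\Sigma|\le \omega_{n-1}$. Combining the two inputs,
\begin{equation*}
|\Sigma|\le \frac{|\partial\Sigma|}{n}-c_1\le \frac{\omega_{n-1}}{n}-c_1=b_n-c_1,
\end{equation*}
and we set $c_{\partial\Sigma}:=c_1$. The main obstacle is carrying out the approximation $\Sigma_\varepsilon\to\Sigma$ within the class of strictly convex free-boundary hypersurfaces while obtaining a deficit $c_1$ that depends only on the outer radius of $\partial\Sigma$, in the sense that $c_{\partial\Sigma}\to 0$ precisely when the outer radius tends to $\pi/2$; this inherits from the analogous limiting behaviour of $\delta$ in Lemma~\ref{Lem-es-height}, which vanishes as the distance from $\partial\Sigma$ to $\partial C_{\pi/2,\infty}(e)$ shrinks to zero.
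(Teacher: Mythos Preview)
The paper does not supply its own proof of this lemma; it is quoted verbatim from \cite{Lambert-Scheuer2017}, so there is no in-paper argument to compare against. I will therefore only assess the correctness of your proposal.

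Your argument produces $|\Sigma|\le b_n-c_1$ with $c_1$ the deficit from Lemma~\ref{es-con}. But that deficit is inherited from the height $\delta$ of Lemma~\ref{Lem-es-height}, and $\delta$ explicitly depends on the length of the second fundamental form of $\Sigma$, not only on boundary data. Your assertion that ``$\delta$ \dots has a strictly positive lower limit, controlled from below by the spherical distance between $\partial\Sigma$ and the equator'' is exactly what Lemma~\ref{Lem-es-height} does \emph{not} establish: case~(ii) of its proof invokes an interior sphere whose radius comes from an $|A|$-bound. Hence, even if the approximation $\Sigma_\varepsilon\to\Sigma$ can be carried out with uniformly bounded $|A_\varepsilon|$, the resulting constant depends on $\sup_\varepsilon|A_\varepsilon|$, i.e.\ on $\Sigma$ itself, and you do not obtain a $c_{\partial\Sigma}$ depending \emph{only on the outer radius of $\partial\Sigma$} as the statement requires. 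You acknowledge this as ``the main obstacle'' but then resolve it by an appeal to Lemma~\ref{Lem-es-height} that the lemma does not support.

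A cleaner route, if you want to stay close to the tools in this paper, is to place the strictness on the other factor: use only the \emph{non-strict} comparison $|\Sigma|\le |C_\Sigma|=\tfrac{1}{n}|\partial\Sigma|$ (which follows from $\widehat\Sigma\subset\widehat{C_\Sigma}$ and monotonicity of $W_1^{\mathbb{R}^{n+1}}$, and needs no positive truncation height $\delta$), together with a \emph{strict} perimeter bound coming from the outer radius, namely $|\partial\Sigma|\le |\partial B_R(e)|=\omega_{n-1}\sin^{n-1}R$ for a convex body contained in the geodesic ball $B_R(e)\subset\mathbb{S}^n$ with $R<\pi/2$. That yields $|\Sigma|\le b_n\sin^{n-1}R$ and hence $c_{\partial\Sigma}=b_n(1-\sin^{n-1}R)$, which depends only on the outer radius and vanishes exactly as $R\to\pi/2$.
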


Then in the final part of this subsection, we prove that for a weakly convex hypersurface $\Sigma$ with free boundary in $\bar{\mathbb{B}}^{n+1}$, $|\Sigma|$ has a positive lower bound depending on $\partial\Sigma$. Precisely, we have
\begin{lem}\label{Lower-bound}
	Let $\Sigma$ be a weakly convex hypersurface with free boundary in $\bar{\mathbb{B}}^{n+1}$ such that $\partial\Sigma$ is not an equator. Then there holds
	\begin{equation}
		|\Sigma|\geq d_{\partial\Sigma},
	\end{equation}
	where $d_{\partial\Sigma}$ is a constant depending on the inner radius of $\partial\Sigma\subset\mathbb{S}^n$, in the sense that it tends to zero only if the inner radius tends to 0.
\end{lem}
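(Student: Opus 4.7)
The plan is to combine a volume lower bound for $\widehat{\Sigma}$ coming from the inner radius of $\widehat{\partial\Sigma}$ with the Alexandrov--Fenchel inequality of Theorem C, applied with contact angle $\theta=\pi/2$ and index $k=1$.

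Let $r>0$ denote the inner radius of the convex region $\widehat{\partial\Sigma}\subset\mathbb{S}^n$, so that there exists $e\in\mathbb{S}^n$ with the geodesic ball $\bar{B}_r(e)\subset\mathbb{S}^n$ contained in $\widehat{\partial\Sigma}$. Since $\Sigma$ is weakly convex with free boundary, the enclosed region $\widehat{\Sigma}\subset\bar{\mathbb{B}}^{n+1}$ is a convex body in $\mathbb{R}^{n+1}$ whose topological boundary is $\Sigma\cup\widehat{\partial\Sigma}$. Because $\bar{B}_r(e)\subset\widehat{\partial\Sigma}\subset\partial\widehat{\Sigma}\subset\widehat{\Sigma}$ and $\widehat{\Sigma}$ is Euclidean-convex and closed, $\widehat{\Sigma}$ must contain the Euclidean convex hull of $\bar{B}_r(e)$ in $\mathbb{R}^{n+1}$, i.e.\ the lens-shaped region bounded above by the spherical cap $\bar{B}_r(e)$ and below by the flat disk $\{x\in\mathbb{R}^{n+1}:\langle x,e\rangle=\cos r,\ |x-(\cos r)\,e|\leq\sin r\}$. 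Consequently
\[
|\widehat{\Sigma}|\geq V(r),
\]
where $V(r)>0$ is the explicit volume of this lens, a smooth, strictly increasing function of $r$ with $V(0)=0$.

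Next, Theorem C with $\theta=\pi/2$ and $k=1$ gives
\[
W_1(\widehat{\Sigma})\geq (f_1\circ f_0^{-1})(W_0(\widehat{\Sigma})),
\]
where $f_k(\rho)=W_k(\widehat{C_{\pi/2,\rho}})$ is smooth, strictly increasing with $f_k(0)=0$. Plugging in $W_0(\widehat{\Sigma})=|\widehat{\Sigma}|$, $W_1(\widehat{\Sigma})=|\Sigma|/(n+1)$, and using monotonicity of $f_1\circ f_0^{-1}$, I would conclude
\[
|\Sigma|\geq (n+1)(f_1\circ f_0^{-1})(|\widehat{\Sigma}|)\geq (n+1)(f_1\circ f_0^{-1})(V(r))=:d_{\partial\Sigma}.
\]
Since $V$ and $f_1\circ f_0^{-1}$ are continuous, strictly increasing, and vanish at $0$, the quantity $d_{\partial\Sigma}$ is strictly positive whenever $r>0$, and tends to $0$ precisely when $r\to 0$, which is exactly the claimed dependence.

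The main technical point of the argument is the Euclidean-convex-hull inclusion in the first paragraph; once this is in place the rest is a mechanical invocation of Theorem C (whose hypotheses are satisfied because $\Sigma$ is properly embedded and weakly convex, and $\partial\Sigma$ is a weakly convex hypersurface of $\mathbb{S}^n$ by Proposition \ref{boundary-h-property}(2) with $\theta=\pi/2$). No delicate curvature or flow analysis is required, and the explicit form of $V(r)$ and $f_k(\rho)$ can be computed if a more quantitative dependence is desired.
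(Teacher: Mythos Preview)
Your argument is correct and takes a genuinely different route from the paper's. The paper argues directly: after reducing to the strictly convex case by approximation, it shows via a maximum-principle argument (using $\langle x,\nu\rangle\leq 0$ from Lemma~\ref{Lem-5.1}) that the height function $\langle x,e\rangle$ attains its maximum on $\partial\Sigma$; taking $\delta'=\max_{\partial\Sigma}\langle x,e\rangle$, the entire ball-cap $\{x\in\bar{\mathbb{B}}^{n+1}:\langle x,e\rangle\geq\delta'\}$ then lies in $\widehat{\Sigma}$, and an orthogonal projection onto $e^\perp$ yields $|\Sigma|\geq b_n(1-\delta'^2)^{n/2}$. Your approach instead bounds the enclosed \emph{volume} $|\widehat{\Sigma}|$ from below by the convex hull of an inscribed spherical cap, and then converts this into an \emph{area} bound via the isoperimetric-type inequality of Theorem~C (case $k=1$, $\theta=\pi/2$). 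Your route is cleaner, handles the weakly convex case without approximation, and avoids the height-function analysis; on the other hand it imports the full strength of Theorem~C, itself proved by a nontrivial flow argument in \cite{HWYZ-2023}, whereas the paper's proof is elementary, self-contained, and produces an explicit constant depending transparently on the inner radius.
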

\begin{proof}
	Without loss of generality, we assume that $\Sigma$ is strictly convex, the weakly convex case then follows by approximation. For any $e\in\mathrm{int}(\widehat{\partial\Sigma})$ such that $\widehat{\partial\Sigma}$ is contained in the open hemisphere $\mathcal{H}(e)$, we claim that the maximum of $\langle x,e\rangle$ is attained on $\partial\Sigma$. Suppose, for contradiction, that the strict maximum is attained at some interior point
	$p\in\mathrm{int}(\Sigma)$. Then we have $\langle x,e\rangle_i=0$,  and hence the unit normal satisfies $\nu(p)//e$ at $p$. By Lemma \ref{Lem-es-height} and Lemma \ref{Lem-5.1}, we deduce that $\nu(p)=-e$. On the other hand, since $\Sigma$ is strictly convex, it lies on one side of the tangent plane $T_p\Sigma$, on the side opposite to $\nu(p)$. More precisely, we have
	\begin{equation*}
		\langle x-p,\nu(p)\rangle\leq 0,\forall x\in\Sigma,
	\end{equation*}
which implies
\begin{equation*}
\langle x,e\rangle\geq \langle p,e\rangle, \forall x\in\Sigma,
\end{equation*}
contradicting the assumption that $\langle x,e\rangle$ achieves a strict maximum at the interior point  $p$. Therefore,  the maximum of $\langle x,e\rangle$ must be attained on $\partial\Sigma$. It follows that for a fixed $e$, there exists a $\delta'>0$ depending $\partial\Sigma$, such that 
\begin{equation}
	\{x\in\bar{\mathbb{B}}^{n+1}:\langle x,e\rangle\geq\delta'\}\subset \widehat{\Sigma}.
\end{equation}
This implies the lower bound
\begin{equation*}
	|\Sigma|\geq |\{x\in\bar{\mathbb{B}}^{n+1}:\langle x,e\rangle=\delta'\}|=b_n\left[1-(\delta')^2\right]^{\frac{n}{2}}:=d_{\partial\Sigma}.
\end{equation*}
\end{proof}

\subsection{The case of strictly convex hypersurfaces} In this subsection, we show that  the strict inequality  in \eqref{In-A-F} holds for all strictly convex hypersurfaces $\Sigma$. 
\begin{prop}\label{Prop-lim}
	Let $\Sigma$ be a strictly convex hypersurface in $\bar{\mathbb{B}}^{n+1}$ with free boundary and $\{\Sigma_t,0<t\leq T^*\}$ be the solution to the inverse mean curvature flow with initial hypersurface $\Sigma$, then for $k\in\mathbb{N}$ and $2k+1\leq n$, there holds
	\begin{equation}\label{Eq-W2k+1}
	 \lim_{t\to T^*}W_{2k+1}(\widehat{\Sigma}_t)=\frac{\omega_{n-1}}{n}\frac{(2k)!!(n-2k-1)!!}{(n+1)!!}.
	\end{equation}
\end{prop}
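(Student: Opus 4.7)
The proof follows by taking the limit $t\to T^*$ in the definition of $W_{2k+1}(\widehat{\Sigma}_t)$, using the flow's convergence to a flat disk established previously, and then evaluating the resulting expression on that disk by hand.

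First, I would apply the main existence-and-convergence theorem proved in Sections \ref{Sec-cur}--\ref{Sec-Con} to the IMCF, which corresponds to $F=E_1=H/n$ (this $F$ satisfies Assumption \ref{assum}). This gives $T^*<\infty$ and a vector $e\in\mathbb{S}^n$ such that $\Sigma_t\to C_{\pi/2,\infty}(e)$ as $t\to T^*$, with smooth convergence in the interior and $C^{1,\alpha}$-convergence up to the boundary (the latter from Arzel\`a--Ascoli applied to the uniform $C^2$-estimates of Section \ref{Sec-cur}). From definition \eqref{Defn-Wk} applied with index $2k+1$ (note $2\le 2k+1\le n$),
\begin{equation*}
W_{2k+1}(\widehat{\Sigma}_t)=\frac{1}{n+1}\int_{\Sigma_t}E_{2k}\,dA+\frac{2k}{(n+1)(n-2k+1)}\,W_{2k-1}^{\mathbb{S}^n}(\widehat{\partial\Sigma_t}).
\end{equation*}

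Next, I would show that each piece has a tractable limit. By Lemma \ref{upper-F-H}, $H(\cdot,t)$ is uniformly bounded along the flow, so by Newton--Maclaurin (Lemma \ref{Lemma-NMI}) $E_{2k}\le E_1^{2k}=(H/n)^{2k}$ is uniformly bounded on $\Sigma_t$. Since $|\Sigma_t|$ remains bounded on $[0,T^*]$ (indeed $|\Sigma_t|=|\Sigma|e^{nt}$ along IMCF with $T^*<\infty$), and interior smoothness forces $E_{2k}(\cdot,t)\to 0$ pointwise as the principal curvatures of the limit flat disk vanish identically, dominated convergence yields $\int_{\Sigma_t}E_{2k}\,dA\to 0$. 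For the boundary term, $\partial\Sigma_t$ is strictly convex in $\mathbb{S}^n$ by Proposition \ref{boundary-h-property}(2) with uniformly bounded second fundamental form and converges $C^{1,\alpha}$ to the equator $\partial C_{\pi/2,\infty}(e)$; continuity of the spherical quermassintegrals then gives $W_{2k-1}^{\mathbb{S}^n}(\widehat{\partial\Sigma_t})\to W_{2k-1}^{\mathbb{S}^n}(\widehat{\partial C_{\pi/2,\infty}(e)})$.

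Finally, I would compute the hemisphere's spherical quermassintegral explicitly. Its boundary is the equator, totally geodesic in $\mathbb{S}^n$, so $E^{\mathbb{S}^n}_{j}=0$ for $j\ge 1$ and the recursion \eqref{W_k} collapses to $W_{2\ell+1}^{\mathbb{S}^n}=\frac{2\ell}{n-2\ell+1}W_{2\ell-1}^{\mathbb{S}^n}$ with base case $W_1^{\mathbb{S}^n}=\omega_{n-1}/n$. Iterating gives
\begin{equation*}
W_{2k-1}^{\mathbb{S}^n}(\widehat{\partial C_{\pi/2,\infty}(e)})=\frac{\omega_{n-1}}{n}\cdot\frac{(2k-2)!!}{(n-1)(n-3)\cdots(n-2k+3)},
\end{equation*}
and substituting this into the formula above, together with the identity $(n+1)(n-1)(n-3)\cdots(n-2k+1)=(n+1)!!/(n-2k-1)!!$, reproduces \eqref{Eq-W2k+1} after simplification. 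The main technical delicacy lies in the dominated-convergence step for the interior integral: the flow degenerates as $t\to T^*$ (since $1/F\to\infty$), so one cannot extend $x(\cdot,t)$ smoothly to $t=T^*$; the $t$-uniform $H$-bound of Lemma \ref{upper-F-H} combined with Newton--Maclaurin is what ultimately makes the argument go through.
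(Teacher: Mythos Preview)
Your overall structure matches the paper's: split $W_{2k+1}(\widehat{\Sigma}_t)$ via \eqref{Defn-Wk} into the curvature integral $\frac{1}{n+1}\int_{\Sigma_t}E_{2k}\,dA_t$ and the boundary piece $\frac{2k}{(n+1)(n-2k+1)}W_{2k-1}^{\mathbb{S}^n}(\widehat{\partial\Sigma_t})$, show the first tends to zero and the second to the hemisphere value. Your computation of $W_{2k-1}^{\mathbb{S}^n}$ for the hemisphere via the recursion \eqref{W_k} is a correct alternative to the paper's use of Solanes' closed formula and yields the same answer.

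The gap is in the step $\int_{\Sigma_t}E_{2k}\,dA_t\to 0$. Your dominated convergence argument hinges on the pointwise convergence $E_{2k}(\cdot,t)\to 0$, which you justify by ``interior smoothness'' of the limit. But nothing in Sections \ref{Sec-cur}--\ref{Sec-Con} establishes $C^2$ convergence of $\Sigma_t$ to the flat disk, even on interior compacta: the convergence theorem asserts only that $\Sigma_t$ tends to $C_{\pi/2,\infty}(e)$, and you yourself observe that the flow degenerates at $T^*$ so the embeddings do not extend smoothly. The uniform $H$-bound from Lemma \ref{upper-F-H} together with Newton--Maclaurin gives only a uniform \emph{upper} bound $E_{2k}\le C$; it says nothing about $E_{2k}\to 0$ pointwise. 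Hence your closing remark, that ``the $t$-uniform $H$-bound \ldots\ is what ultimately makes the argument go through,'' supplies the domination but not the pointwise limit, and the argument is incomplete as written. The paper bypasses this issue entirely by invoking \cite[Lemma 2.2]{Lambert-Scheuer2017}, which proves directly (without any pointwise curvature convergence) that $\lim_{t\to T^*}\int_{\Sigma_t}E_1^p\,dA_t=0$ for all $1\le p<\infty$; then $\int_{\Sigma_t}E_{2k}\le\int_{\Sigma_t}E_1^{2k}\to 0$ by Lemma \ref{Lemma-NMI}. You should either cite or reproduce that integral limit. A minor omission: the case $k=0$ (i.e.\ $W_1$) is not covered by \eqref{Defn-Wk} and requires a one-line separate argument using $|\Sigma_t|\to b_n=\omega_{n-1}/n$, as the paper notes.
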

\begin{proof}
	By \cite[Lemma 2.2]{Lambert-Scheuer2017}, we have
	\begin{equation}\label{eq-InteE_1}
		\lim_{t\to T^{*}}\int_{\Sigma_t}{E_1^p}\,dA_t=0,\quad 1\leq p<\infty.
	\end{equation}
Since $\Sigma_{T^*}=C_{\frac{\pi}{2},\infty}(e)$ for some $e\in\mathbb{S}^n$, combining \eqref{Defn-Wk} and \eqref{eq-InteE_1} with  Lemma \ref{Lemma-NMI}, we obtain
\begin{equation}\label{eq-W_2k+1}
	\lim_{t\to T^{*}}W_{2k+1}(\widehat{\Sigma}_t)=\frac{2k}{(n+1)(n-2k+1)}W_{2k-1}^{\mathbb{S}^n}(B_{\frac{\pi}{2}}(e)),
\end{equation}
for $k\geq 1$, where $B_{\rho}(e)$ denotes the geodesic ball in $\mathbb{S}^n$ of radius $\rho$ centered at $e\in\mathbb{S}^n$. By \cite[Corollary 8]{Sol06}, for any domain $\Omega$ in $\mathbb{S}^n$ with $C^2$ boundary, the following identity holds:
\begin{equation*}
	W_{2k-1}^{\mathbb{S}^n}(\Omega)=\frac{1}{n}\sum_{i=0}^{k-1}\frac{(2k-2)!!(n-2k+1)!!}{(2k-2i-2)!!(n-2k+2i+1)!!}\int_{\partial\Omega}{E_{2k-2i-2}}\,ds.
\end{equation*}
In particular, for a geodesic ball $B_{\rho}\subset\mathbb{S}^n$,  this yields
\begin{equation*}
	W_{2k-1}^{\mathbb{S}^n}(B_{\rho})=\frac{\omega_{n-1}}{n}\sum_{i=0}^{k-1}\frac{(2k-2)!!(n-2k+1)!!}{(2k-2i-2)!!(n-2k+2i+1)!!}\cos^{2k-2i-2}\rho\sin^{n-2k+2i+1}\rho,
\end{equation*}
and hence
\begin{equation}\label{eq-Wequator}
	W_{2k-1}^{\mathbb{S}^n}(B_{\frac{\pi}{2}}(e))=\frac{\omega_{n-1}}{n}\frac{(2k-2)!!(n-2k+1)!!}{(n-1)!!}.
\end{equation}
Combining \eqref{eq-W_2k+1} and \eqref{eq-Wequator} yields \eqref{Eq-W2k+1} for $k\geq 1$. The case $k=0$ follows directly from \eqref{Defn-W01} and the fact that $b_n=\frac{\omega_{n-1}}{n}$.
\end{proof}
Next, we proceed by induction to prove that the strict inequality in \eqref{In-A-F} holds for all strictly convex hypersurfaces $\Sigma$. 
\begin{lem}\label{Lem-Strictly}
	Let $\Sigma\subset\bar{\mathbb{B}}^{n+1} (n\geq 2)$ be a properly embedded, strictly convex smooth hypersurface in the unit ball with free boundary. Then for $k\in\mathbb{N}_{+}$ and $2k+1\leq n$, there holds
	\begin{equation}\label{In-A-F-S}
		W_{2k+1}(\widehat{\Sigma})>\frac{\omega_{n-1}}{n}\frac{\prod_{j=0}^k(n-2j)}{\prod_{j=0}^k(n+1-2j)}\sum_{i=0}^k(-1)^i\binom{k}{i}\frac{1}{n-2k+2i}\left[\frac{n(n+1)}{\omega_{n-1}}W_1(\widehat{\Sigma})\right]^{\frac{n-2k+2i}{n}}.
	\end{equation}
\end{lem}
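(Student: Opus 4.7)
The strategy is to run the inverse mean curvature flow \eqref{Inverse-flow} (with $F=H$, so $f=1/H$) starting from the strictly convex free-boundary $\Sigma$ and to extract \eqref{In-A-F-S} as the positivity of a monotone deficit. By the results of Sections \ref{Sec-cur}--\ref{Sec-Con}, the solution $\Sigma_t$ exists on a finite interval $[0,T^*)$, remains strictly convex, and converges to a flat disk $C_{\pi/2,\infty}(e)$ as $t\to T^*$ for some $e\in\mathbb{S}^n$. The first observation is that the right-hand side of \eqref{In-A-F-S} admits a clean integral form: setting $s(W_1):=(n(n+1)W_1/\omega_{n-1})^{1/n}$ and $C_{n,k}:=\frac{\omega_{n-1}}{n}\prod_{j=0}^{k}\frac{n-2j}{n+1-2j}$, the binomial identity $\sum_{i=0}^k(-1)^i\binom{k}{i}r^{2i}=(1-r^2)^k$ combined with differentiation in $s$ gives
\begin{equation*}
\sum_{i=0}^k(-1)^i\binom{k}{i}\frac{s^{n-2k+2i}}{n-2k+2i}=\int_0^{s}r^{n-2k-1}(1-r^2)^k\,dr,
\end{equation*}
so that \eqref{In-A-F-S} is equivalent to $\Phi(\widehat{\Sigma})>0$, where
\begin{equation*}
\Phi(\widehat{\Sigma}):=W_{2k+1}(\widehat{\Sigma})-C_{n,k}\int_0^{s(W_1(\widehat{\Sigma}))}r^{n-2k-1}(1-r^2)^k\,dr.
\end{equation*}

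Next, I verify that $\Phi$ vanishes at the terminal flat disk and compute its time derivative. From \eqref{s2.dW} with $f=1/H$ and index $1$, one obtains $\frac{d}{dt}W_1(\widehat{\Sigma}_t)=W_1(\widehat{\Sigma}_t)$, hence $W_1(t)=W_1(0)e^t$ and $s(t)=s(0)e^{t/n}$. Since $W_1(\widehat{C_{\pi/2,\infty}})=\omega_{n-1}/[n(n+1)]$, one has $s(T^*)=1$; combining this with Proposition \ref{Prop-lim} and a direct beta-integral evaluation of $\int_0^1 r^{n-2k-1}(1-r^2)^k\,dr$ gives $\Phi(\widehat{\Sigma}_{T^*})=0$. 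A second application of \eqref{s2.dW} then yields
\begin{equation*}
\frac{d\Phi}{dt}=\frac{n-2k}{n(n+1)}\int_{\Sigma_t}\frac{E_{2k+1}}{E_1}\,d\mu_t-\frac{C_{n,k}}{n}\,s(t)^{n-2k}\bigl(1-s(t)^{2}\bigr)^{k}.
\end{equation*}

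The main task is then to prove $d\Phi/dt\leq 0$ along the flow, with strict inequality whenever $\Sigma_t$ is not a flat disk. Using $s^n=n|\Sigma_t|/\omega_{n-1}$, this reduces to the sharp integral estimate
\begin{equation*}
\int_{\Sigma_t}\frac{E_{2k+1}}{E_1}\,d\mu_t\leq\frac{(n+1)C_{n,k}}{n-2k}\,s(t)^{n-2k}\bigl(1-s(t)^{2}\bigr)^{k},
\end{equation*}
whose two sides both vanish as $s\to 1$ (matching the flat-disk limit). The plan is to iterate the Newton-MacLaurin inequality $E_{j+1}E_{j-1}\leq E_j^2$ (Lemma \ref{Lemma-NMI}) so as to replace $E_{2k+1}/E_1$ by a telescoping combination of $E_{2j}$'s, and then to apply capillary Minkowski-type boundary identities of the kind used in Lemma \ref{Lem-con1} to convert each $\int_{\Sigma_t}E_{2j}\,d\mu_t$ into lower-order capillary and spherical quermassintegrals, whose degeneration as $s\to 1$ is quantified by isoperimetric bounds such as Lemma \ref{lem-area-L}. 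The equality case of Newton-MacLaurin (umbilicity) then guarantees strict inequality whenever $\Sigma_t$ is strictly convex and not flat.

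The hard part is the algebraic/combinatorial bookkeeping in this last step: the sharp constants $C_{n,k}$ and the exact power $(1-s^2)^k$ on the right are dictated by the flat-disk limit, and organising the iteration of Newton-MacLaurin together with the capillary Minkowski identities so as to reproduce precisely that polynomial requires a careful choice of ordering. Granted the key differential inequality, strict monotonicity of $\Phi$ together with $\Phi(\widehat{\Sigma}_{T^*})=0$ yields $\Phi(\widehat{\Sigma})>\Phi(\widehat{\Sigma}_{T^*})=0$, which is \eqref{In-A-F-S}.
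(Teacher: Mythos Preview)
Your overall scaffold---define the deficit $\Phi=W_{2k+1}-A_k$, establish its monotonicity along inverse mean curvature flow, and use the flat-disk limit $\Phi(T^*)=0$---is exactly the paper's strategy, and your integral reformulation $A_k=C_{n,k}\int_0^s r^{n-2k-1}(1-r^2)^k\,dr$ is correct and elegant. The limit computation also matches Proposition~\ref{Prop-lim}.

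The genuine gap is the proof of the monotonicity itself. Your plan to ``iterate Newton--MacLaurin together with capillary Minkowski identities'' so as to obtain the sharp pointwise-in-time bound
\[
\int_{\Sigma_t}\frac{E_{2k+1}}{E_1}\,d\mu_t\le \frac{(n+1)C_{n,k}}{n-2k}\,s^{n-2k}(1-s^2)^k
\]
is not a proof, and it is not clear such a direct telescoping argument exists. The paper's key organizing idea, which you do not invoke, is \emph{induction on $k$}. One application of Newton--MacLaurin gives $E_{2k+1}/E_1\le E_{2k}$; the definition \eqref{Defn-Wk} then expresses $\int_{\Sigma_t}E_{2k}$ in terms of $W_{2k+1}$ and the spherical quermassintegral $W_{2k-1}^{\mathbb S^n}(\widehat{\partial\Sigma_t})$; Lemma~\ref{Lem-con1} bounds $W_{2k-1}^{\mathbb S^n}\ge (n+1)W_{2k-1}$; and finally the \emph{inductive hypothesis} $W_{2k-1}>A_{k-1}$ closes the differential inequality for the normalized deficit $\phi(t)=\Phi(t)/W_1^{(n-2k)/n}$. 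The combinatorial identity you would need at the end is precisely the one verified in \eqref{eq-iden}. Without recognizing this inductive structure, the ``algebraic bookkeeping'' you defer is not bookkeeping at all---it is the heart of the argument. Note also that the paper proves monotonicity of the \emph{rescaled} quantity $\phi$, which is weaker than $\Phi'\le 0$; it is not obvious your unrescaled inequality holds.

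There is also a problem with your source of strictness. You attribute it to the equality case in Newton--MacLaurin (umbilicity), but spherical caps $C_{\pi/2,r}$ are umbilic, strictly convex, and not flat, yet must satisfy \eqref{In-A-F-S} strictly. In the paper the strict inequality for the base case $k=1$ comes from Lemma~\ref{es-con}, which gives $(n+1)W_1-W_1^{\mathbb S^n}(\widehat{\partial\Sigma})\le -c_1<0$ for every strictly convex free-boundary hypersurface; strictness for $k\ge 2$ is then inherited through the induction.
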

\begin{proof}
	We firstly prove \eqref{In-A-F-S} for $k=1$. Along the flow \eqref{Inverse-flow} with $F=\frac{1}{E_1}$, we have
	\begin{align}
		\partial_t W_1(\widehat{\Sigma}_t)&=\frac{n}{n+1}|\Sigma_t|=n W_1(\widehat{\Sigma}_t),\label{va-W1}\\
		\partial_t W_3(\widehat{\Sigma}_t)&=\frac{n-2}{n+1}\int_{\Sigma_t}{\frac{E_3}{E_1}}\,dA_t\notag\leq \frac{n-2}{n+1}\int_{\Sigma_t}{E_2}\,dA_t\notag\\
		&=(n-2)\left[W_3(\widehat{\Sigma}_t)-\frac{2}{(n+1)(n-1)}W_1^{\mathbb{S}^n}(\widehat{\partial\Sigma}_t)\right]\notag\\
		&<(n-2)W_3(\widehat{\Sigma}_t)-\frac{2(n-2)}{n-1}W_1(\widehat{\Sigma}_t),\label{va-W3}
	\end{align}
where we used Lemma \ref{es-con} in the last inequality. Combining \eqref{va-W1} and \eqref{va-W3} yields
\begin{equation*}
	\partial_t\left(W_3(\widehat{\Sigma}_t)+\frac{n-2}{n-1}W_1(\widehat{\Sigma}_t)\right)<(n-2)\left(W_3(\widehat{\Sigma}_t)+\frac{n-2}{n-1}W_1(\widehat{\Sigma}_t)\right).
\end{equation*}
If we define
\begin{equation}\label{Defn-Qt}
	Q(t)=W_1(\widehat{\Sigma}_t)^{-\frac{n-2}{n}}\left(W_3(\widehat{\Sigma}_t)+\frac{n-2}{n-1}W_1(\widehat{\Sigma}_t)\right),
\end{equation}
then,  by combining\eqref{va-W1} with \eqref{Defn-Qt}, we conclude that along the flow \eqref{Inverse-flow}, the following monotonicity property holds:
\begin{equation}\label{Mono-Q}
	\frac{d}{dt}Q(t)<0.
\end{equation}
By a direct calculation using Proposition \ref{Prop-lim}, we obtain
\begin{align*}
	\lim_{t\to T^{*}}W_{1}(\widehat{\Sigma}_t)&=\frac{\omega_{n-1}}{n(n+1)},\\
	\lim_{t\to T^{*}}W_{3}(\widehat{\Sigma}_t)&=\frac{2\omega_{n-1}}{n(n+1)(n-1)},
\end{align*}
and thus
\begin{equation}\label{Lim-Q}
	\lim_{t\to T^{*}} Q(t)=\frac{n^{\frac{n-2}{n}}}{n-1}\left(\frac{\omega_{n-1}}{n+1}\right)^{\frac{2}{n}},
\end{equation}
which establishes \eqref{In-A-F-S} for $k=1$.

 In the following, we assume that the strict inequality between $W_{2k-1}(\widehat{\Sigma}_t)$ and $W_1(\widehat{\Sigma}_t)$ holds. We then proceed by induction to show that the strict inequality between $W_{2k+1}(\widehat{\Sigma}_t)$ and $W_1(\widehat{\Sigma}_t)$ also holds. We denote 
 \begin{equation}\label{Defn-Ak}
 	A_k(\widehat{\Sigma}_t)=\frac{\omega_{n-1}}{n}\frac{\prod_{j=0}^k(n-2j)}{\prod_{j=0}^k(n+1-2j)}\sum_{i=0}^k(-1)^i\binom{k}{i}\frac{1}{n-2k+2i}\left[\frac{n(n+1)}{\omega_{n-1}}W_1(\widehat{\Sigma}_t)\right]^{\frac{n-2k+2i}{n}}
 \end{equation}
  and define the auxiliary function 
 \begin{equation}\label{Defn-phi}
 	\phi(t)=\frac{W_{2k+1}(\widehat{\Sigma}_t)-A_k(\widehat{\Sigma}_t)}{W_1(\widehat{\Sigma}_t)^{\frac{n-2k}{n}}},
 \end{equation}
 then by a direct calculation, we obtain
 \begin{equation}\label{de-phit}
 	\frac{d}{dt}\phi(t)=\frac{\frac{d}{dt}W_{2k+1}(\widehat{\Sigma}_t)-(n-2k)W_{2k+1}(\widehat{\Sigma}_t)}{W_1(\widehat{\Sigma}_t)^{\frac{n-2k}{n}}}-\frac{d}{dt}\left(\frac{A_k(\widehat{\Sigma}_t)}{W_1(\widehat{\Sigma}_t)^{\frac{n-2k}{n}}}\right).
 \end{equation}
Since
\begin{align}
	\frac{d}{dt}W_{2k+1}(\widehat{\Sigma}_t)=&\frac{n-2k}{n+1}\int_{\Sigma_t}{\frac{E_{2k+1}}{E_1}}\,dA_t
	\leq\frac{n-2k}{n+1}\int_{\Sigma_t}{E_{2k}}\,dA_t\notag\\
	=&(n-2k)\left[W_{2k+1}(\widehat{\Sigma}_t)-\frac{2k}{(n+1)(n-2k+1)}W_{2k-1}^{\mathbb{S}^n}(\widehat{\partial\Sigma}_t)\right]\notag\\
	\leq&(n-2k)W_{2k+1}(\widehat{\Sigma}_t)-\frac{2k(n-2k)}{n-2k+1}W_{2k-1}(\widehat{\Sigma}_t), \label{va-phit-1}
\end{align}
by combining \eqref{de-phit} with \eqref{va-phit-1}, we have
\begin{equation}\label{va-phit-2}
	\frac{d}{dt}\phi(t)\leq -\frac{2k(n-2k)}{n-2k+1}\frac{W_{2k-1}(\widehat{\Sigma}_t)}{W_1(\widehat{\Sigma}_t)^{\frac{n-2k}{n}}}-\frac{d}{dt}\left(\frac{A_k(\widehat{\Sigma}_t)}{W_1(\widehat{\Sigma}_t)^{\frac{n-2k}{n}}}\right).
\end{equation}
Now, using the assumption that the strict inequality between $W_{2k-1}(\widehat{\Sigma}_t)$ and $W_1(\widehat{\Sigma}_t)$ holds,  a direct computation yields
\begin{equation*}
	-\frac{2k(n-2k)}{n-2k+1}\frac{W_{2k-1}(\widehat{\Sigma}_t)}{W_1(\widehat{\Sigma}_t)^{\frac{n-2k}{n}}}-\frac{d}{dt}\left(\frac{A_k(\widehat{\Sigma}_t)}{W_1(\widehat{\Sigma}_t)^{\frac{n-2k}{n}}}\right)<0.
\end{equation*}
Therefore,
\begin{equation}
	\phi(0)>\lim_{t\to T^{*}}\phi(t).
\end{equation}
Next, we prove that $\lim_{t\to T^{*}}W_{2k+1}(\widehat{\Sigma}_{t})=\lim_{t\to T^*}A_k(\widehat{\Sigma}_t)$. By Proposition \ref{Prop-lim}, it is equivalent to prove
\begin{equation}\label{eq-iden}
	\frac{(2k)!!}{\prod_{j=0}^k(n-2j)}=\sum_{i=0}^k(-1)^i\binom{k}{i}\frac{1}{n-2k+2i}.
\end{equation}
If we denote the right hand side of \eqref{eq-iden} as $S(n,k)$, then we obtain
\begin{align}
	S(n,k)=&\sum_{i=0}^k (-1)^i\binom{k}{i}\frac{1}{n-2k+2i}\notag\\
	=&\sum_{i=0}^k (-1)^i\left[\binom{k-1}{i}+\binom{k-1}{i-1}\right]\frac{1}{n-2k+2i}\notag\\
	=&\sum_{i=0}^{k-1}(-1)^i\binom{k-1}{i}\frac{1}{n-2-2(k-1)+2i}-\sum_{i=1}^k(-1)^{i-1}\binom{k-1}{i-1}\frac{1}{n-2k+2i}\notag\\
	=&\sum_{i=0}^{k-1}(-1)^i\binom{k-1}{i}\frac{1}{n-2-2(k-1)+2i}-\sum_{i=0}^{k-1}(-1)^i\binom{k-1}{i}\frac{1}{n-2(k-1)+2i}\notag\\
	=&S(n-2,k-1)-S(n,k-1).\label{In-Snk}
\end{align}
It's easy to verify that \eqref{eq-iden} holds for $n=1$ and 2. Moreover, \eqref{eq-iden} also holds for the positive integer pairs $(n,k)=(n,0)$ and $(n,1)$ for all $n$. Suppose, for contradiction, that there exists a pair $(n,k)$ with $n\geq 3$ and $2k+1\leq n$ such that \eqref{eq-iden} does not hold. Let $n_0$ be the minimal integer for which \eqref{eq-iden} fails, and for this fixed $n_0$, let $k_0$ be the minimal integer such that \eqref{eq-iden} fails for $(n_0,k_0)$. Then by previous observation, we must have $n_0\geq 3$ and $k_0\geq 2$. However, we observe that
\begin{align}
	S(n_0,k_0)&=S(n_0-2,k_0-1)-S(n_0,k_0-1)\notag\\
	&=\frac{(2k_0-2)!!}{\prod_{j=0}^{k_0-1}(n_0-2-2j)}-\frac{(2k_0-2)!!}{\prod_{j=0}^{k_0-1}(n_0-2j)}\notag\\
	&=\frac{(2k_0)!!}{\prod_{j=0}^{k_0}(n_0-2j)},
\end{align}
which leads to a contradiction. This completes the proof of Lemma \ref{Lem-Strictly}.
\end{proof}

\subsection{The case of weakly convex hypersurfaces} In this subsection, we assume that $\Sigma$ is a weakly convex hypersurface. If $\Sigma$ is a flat disk with free boundary, then the equality in \eqref{In-A-F} clearly holds. If $\Sigma$ is not flat, then the inequality \eqref{In-A-F} follows from approximation using the mean curvature flow with free boundary.Therefore, in the remainder of this subsection, we prove that if $\Sigma$ is weakly convex but not flat, then the strict inequality in \eqref{In-A-F}  holds. The mean curvature flow for hypersurfaces in the unit ball with free boundary reads as:
\begin{equation}\label{flow-mean}
	\left\{\begin{aligned}
		\partial_t x&=-H\nu \qquad \text{in}\quad \bar{\mathbb{B}}^n \times[0,T),\\
		\langle\bar{N}\circ x,\nu\rangle&=0 \qquad \text{on}\quad \partial\bar{\mathbb{B}}^n \times[0,T),\\
		x(\cdot,0)&=x_0(\cdot) \qquad \text{on} \quad \bar{\mathbb{B}}^n.
	\end{aligned}\right.
\end{equation}
By \cite[Theorem 2.1]{Stahl1996-2}, there exists a solution of \eqref{flow-mean} for $x(\cdot,t)\in C^{\infty}(\bar{\mathbb{B}}^n\times(0,\varepsilon])\cap C^{2+\alpha,1+\frac{\alpha}{2}}(\bar{\mathbb{B}}^n\times[0,\varepsilon])$ for small $\varepsilon>0$ and we have the following approximation result.

\begin{thm}[\cite{Lambert-Scheuer2017}]
	Suppose $x(\cdot,t):\bar{\mathbb{B}}^n\times [0,T)\to\mathbb{R}^{n+1}$ is a solution to flow \eqref{flow-mean} with initial hypersurface $\Sigma_0$ being weakly convex with free boundary. Then either $\partial\Sigma_0$ is an equator of the unit sphere or $(h_{ij})>0$ for $t>0$.
\end{thm}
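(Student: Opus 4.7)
The plan is to apply Hamilton's strong tensor maximum principle to the second fundamental form $h_i^j$ evolving under \eqref{flow-mean}, paired with a Hopf-type boundary lemma that exploits the free-boundary identities of Proposition \ref{boundary-h-property} at $\theta = \pi/2$. Concretely, I would show that the smallest principal curvature $\kappa_1 = \kappa_{\min}(h_i^j)$ either stays strictly positive for $t > 0$, or vanishes identically on an initial time slab, and then argue that the latter alternative forces $\Sigma_0$ to be a flat disk whose boundary is the equator.

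\textbf{Interior evolution.} Specializing \eqref{ev-Wg} to $f = -H$ (with $\mathcal{T} = 0$) and combining with Simons' identity for hypersurfaces in $\mathbb{R}^{n+1}$, I would derive Huisken's equation
\[
\partial_t h_i^j \;=\; \Delta h_i^j + |A|^2\, h_i^j ,
\]
so that the smallest eigenvalue satisfies, in the viscosity sense,
\[
\partial_t \kappa_1 \;\geq\; \Delta \kappa_1 + |A|^2 \kappa_1 .
\]
The weak maximum principle preserves $\kappa_1 \geq 0$ in the interior, and the strong maximum principle gives the dichotomy: either $\kappa_1 > 0$ throughout $\mathrm{int}(\mathbb{B}^n) \times (0,T)$, or $\kappa_1 \equiv 0$ on some initial slab $\bar{\mathbb{B}}^n \times [0,t_0]$.

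\textbf{Boundary Hopf step.} Ruling out a first zero of $\kappa_1$ attained at a boundary point $p \in \partial\Sigma_{t_0}$ is where the free-boundary structure becomes essential. Choose an adapted orthonormal frame $\{\mu, e_1,\dots,e_{n-1}\}$ at $p$; by Proposition \ref{boundary-h-property}(1), $\mu$ is principal, so $(h_i^j)$ is block-diagonal at $p$. Specializing parts (3)--(4) to $\theta = \pi/2$ yields $\tilde h_{\alpha\beta} = \delta_{\alpha\beta}$ and the key identity
\[
\nabla_\mu h_{\alpha\beta} \;=\; h_{\mu\mu}\,\delta_{\alpha\beta} - h_{\alpha\beta} .
\]
If $\kappa_1$ is realized at $p$ by a tangential eigenvector $V = \sum c_\alpha e_\alpha$, then $\nabla_\mu \kappa_1 = V^\alpha V^\beta \nabla_\mu h_{\alpha\beta} = h_{\mu\mu} - \kappa_1 \geq 0$, contradicting the strict inequality $\nabla_\mu \kappa_1 < 0$ demanded by Hopf's lemma at a boundary minimum. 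In the remaining case $\kappa_1 = h_{\mu\mu}$ (with $h_{\alpha\alpha} \geq \kappa_1$ for every $\alpha$), differentiating the tangential identity $h_{\mu\alpha}|_{\partial\Sigma_{t_0}} \equiv 0$ together with Codazzi's equation $\nabla_\alpha h_{\mu\mu} = \nabla_\mu h_{\mu\alpha}$ supplies an analogous inequality $\nabla_\mu h_{\mu\mu} \geq 0$ that again obstructs Hopf.

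\textbf{Degenerate case and main obstacle.} If the dichotomy lands on $\kappa_1 \equiv 0$ over an initial slab, Hamilton's strong maximum principle for tensors shows that the null distribution $\ker h \subset T\Sigma_t$ is $\nabla$-parallel; combined with compactness and the free-boundary condition, a splitting argument in the spirit of Stahl forces $h_i^j \equiv 0$ on $\Sigma_0$, so $\Sigma_0$ is totally geodesic in $\mathbb{R}^{n+1}$. A flat hyperplane that meets $\mathbb{S}^n$ orthogonally must pass through the origin, hence its boundary on $\mathbb{S}^n$ is an equator. The main obstacle I anticipate is the Hopf step in the mixed block case $\kappa_1 = h_{\mu\mu}$ coinciding with some tangential eigenvalue: there one must combine the two boundary inequalities (from part (4) of Proposition \ref{boundary-h-property} and from Codazzi differentiation of $h_{\mu\alpha}=0$) and marry them with Hamilton's parallel-extension trick for an eigenvector $V$ away from $p$ to obtain a genuine strict descent of $\kappa_1$ along $\mu$ that contradicts the boundary identities.
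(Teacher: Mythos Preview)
The paper does not prove this statement: it is quoted verbatim as a result of Lambert--Scheuer \cite{Lambert-Scheuer2017} and used as a black box in \S\ref{Sec-A F} to approximate weakly convex free-boundary hypersurfaces by strictly convex ones. There is therefore no proof in the paper to compare your proposal against.

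That said, your outline follows the natural strategy (and essentially the one in Stahl's and Lambert--Scheuer's work): the interior evolution $\partial_t h_i^j=\Delta h_i^j+|A|^2h_i^j$ is correct, and the boundary identities from Proposition~\ref{boundary-h-property} at $\theta=\pi/2$ indeed give $\nabla_\mu h_{\alpha\alpha}=h_{\mu\mu}-h_{\alpha\alpha}$ and, via $\nabla_\mu H=H$, the formula $\nabla_\mu h_{\mu\mu}=2H-nh_{\mu\mu}$. Both of these produce the ``wrong sign'' for a boundary Hopf minimum of $\kappa_1$ when $\kappa_1=0$ and $H>0$, exactly as you sketch. The genuinely delicate point you flag yourself---handling the case where the null eigenspace is multi-dimensional or where $\kappa_1$ is not smooth---is real, and the standard remedy is not a pointwise Hopf lemma for $\kappa_1$ but Stahl's tensor maximum principle with Neumann boundary (or equivalently Hamilton's trick applied to a smooth test function $h_{ij}V^iV^j$ with $V$ parallel-extended). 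Your degenerate-case argument is also slightly too quick: Hamilton's strong maximum principle gives that $\ker h$ is parallel and invariant under the null-space condition, but to conclude $h\equiv 0$ (rather than a rank-deficient $h$) you need to combine this with the boundary structure---specifically, that $\mu$ is principal and that a nontrivial parallel null distribution on a disk-type hypersurface with the given boundary constraints forces total geodesy. This is where most of the work lies, and your sketch does not yet supply it.
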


Firstly, we prove a stronger version of Lemma \ref{Lem-Strictly}, which will be needed in later analysis.

\begin{lem}\label{In-pre}
	Let $\Sigma\subset\bar{\mathbb{B}}^{n+1} (n\geq 2)$ be a properly embedded, strictly convex smooth hypersurface in the unit ball with free boundary. Then for $k\in\mathbb{N}_{+}$ and $2k+1\leq n$, there exists a constant $a_k>0$, depending only on $n,k,\partial\Sigma$ and the length of the second fundamental form of $\Sigma$ such that 
	\begin{equation}\label{In-A-F-S1}
		W_{2k+1}(\widehat{\Sigma})\geq\frac{\omega_{n-1}}{n}\frac{\prod_{j=0}^k(n-2j)}{\prod_{j=0}^k(n+1-2j)}\sum_{i=0}^k(-1)^i\binom{k}{i}\frac{1}{n-2k+2i}\left[\frac{n(n+1)}{\omega_{n-1}}W_1(\widehat{\Sigma})\right]^{\frac{n-2k+2i}{n}}+a_k.
	\end{equation}
\end{lem}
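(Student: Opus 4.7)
The plan is to sharpen the flow-based proof of Lemma \ref{Lem-Strictly} so as to extract a quantitative gap in the strict inequality. Starting from $\Sigma$, run the inverse mean curvature flow (\eqref{Inverse-flow} with $F=1/E_1$), and consider
\[
\phi(t)\;=\;\frac{W_{2k+1}(\widehat{\Sigma}_t)-A_k(\widehat{\Sigma}_t)}{W_1(\widehat{\Sigma}_t)^{(n-2k)/n}},
\]
with $A_k$ as in \eqref{Defn-Ak}. By Proposition \ref{Prop-lim} together with the combinatorial identity \eqref{eq-iden}, $\lim_{t\to T^*}\phi(t)=0$, so it suffices to produce $\phi(0)\geq a_k>0$ quantitatively in terms of $n$, $k$, $\partial\Sigma$, and the length of the second fundamental form of $\Sigma$. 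I proceed by induction on $k$.

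\textbf{Quantitative differential inequality.} For the base case $k=1$, the idea is to retain the deficit constant $c_1(t)>0$ from Lemma \ref{es-con}, namely $W_1^{\mathbb{S}^n}(\widehat{\partial\Sigma}_t)\geq(n+1)W_1(\widehat{\Sigma}_t)+c_1(t)$, throughout the computation leading to \eqref{Mono-Q}; this refines the derivative of $Q$ in \eqref{Defn-Qt} to
\[
\frac{dQ}{dt}\;\leq\;-W_1(t)^{-(n-2)/n}\,\frac{2(n-2)\,c_1(t)}{(n+1)(n-1)}.
\]
For the inductive step from $k-1$ to $k$, a direct computation using $W_1(t)=W_1(0)e^{nt}$ and the explicit form \eqref{Defn-Ak} establishes the algebraic identity
\[
\frac{d}{dt}\!\left(\frac{A_k(\widehat{\Sigma}_t)}{W_1(\widehat{\Sigma}_t)^{(n-2k)/n}}\right)\;=\;-\frac{2k(n-2k)}{n-2k+1}\,\frac{A_{k-1}(\widehat{\Sigma}_t)}{W_1(\widehat{\Sigma}_t)^{(n-2k)/n}},
\]
which, combined with \eqref{va-phit-2}, upgrades the bound on $\phi'(t)$ into
\[
\frac{d\phi}{dt}\;\leq\;-\frac{2k(n-2k)}{n-2k+1}\,\frac{W_{2k-1}(\widehat{\Sigma}_t)-A_{k-1}(\widehat{\Sigma}_t)}{W_1(\widehat{\Sigma}_t)^{(n-2k)/n}}.
\]
In both cases the right hand side is strictly negative with an explicit positive lower bound at $t=0$: the constant $c_1(0)$ supplied by Lemma \ref{Lem-es-height} in the base case, and the inductive constant $a_{k-1}$ in the step.

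\textbf{Gap extraction and main obstacle.} By the short-time smoothness of the flow and the resulting continuous dependence of the quermassintegrals and of $c_1$ on $t$, I would fix $\tau_0>0$ depending only on $n$, $k$, $\partial\Sigma$, and the length of the second fundamental form of $\Sigma$ so that $c_1(t)\geq c_1(0)/2$ (base case) respectively $W_{2k-1}(\widehat{\Sigma}_t)-A_{k-1}(\widehat{\Sigma}_t)\geq a_{k-1}/2$ (inductive step) on $[0,\tau_0]$, while $W_1(t)$ remains bounded above. Integrating the bound on $dQ/dt$ (resp.\ $d\phi/dt$) over $[0,\tau_0]$ and invoking the monotonicity $Q(\tau_0)\geq\lim_{t\to T^*}Q(t)$ (resp.\ $\phi(\tau_0)\geq 0$) then yields $\phi(0)\geq a_k$ with $a_k>0$ explicit in the admissible data. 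The main obstacle is to ensure that $\tau_0$, and therefore $a_k$, depends only on the stated quantities; this reduces to uniform short-time control of the inverse mean curvature flow, furnished by the $H$-bound of Lemma \ref{upper-F-H}, the preservation of strict convexity in Lemma \ref{Lem-pconvex}, and a $C^2$-compactness argument for the admissible class of initial data.
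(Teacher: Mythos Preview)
Your overall strategy coincides with the paper's: both arguments run the inverse mean curvature flow from $\Sigma$, establish the same sharpened differential inequality for $\phi$ (and for $Q$ in the base case $k=1$), and integrate over a definite time interval to extract the quantitative gap $a_k$. The substantive difference, and the genuine gap in your proposal, lies in how that time interval is produced with dependence only on $n,k,\partial\Sigma$, and $|A|_\Sigma$.

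Your ``$C^2$-compactness argument for the admissible class of initial data'' does not go through. The class of strictly convex free-boundary hypersurfaces with fixed $\partial\Sigma$ and a uniform bound on $|A|$ carries no control on the modulus of continuity of $A$ and, crucially, no positive lower bound on the principal curvatures; it is not precompact in $C^2$, and subsequential limits may be only weakly convex, where the speed $1/E_1$ is unbounded and neither uniform short-time existence nor your continuity argument survives uniformly. The paper bypasses compactness via an explicit barrier. Lemma~\ref{Lem-es-height} gives $\langle x,e\rangle\geq\delta$ with $\delta$ depending only on $|A|$ and $\partial\Sigma$, whence one chooses an explicit spherical cap $C_{\pi/2,R}(e)$ whose enclosed region contains $\Sigma$; the avoidance principle then traps $\Sigma_t$ inside the evolved cap on $[0,\bar T]$ with $\bar T$ equal to half the cap's lifespan, guaranteeing $\langle x,e\rangle\geq\delta_1>0$ on $\Sigma_t$ uniformly for $t\in[0,\bar T]$. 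This yields a uniform $c_1$ in Lemma~\ref{es-con} on $[0,\bar T]$, after which one integrates the sharpened inequality for $Q$ over $[0,\bar T]$ and invokes Lemma~\ref{Lower-bound} for a lower bound on $W_1(\widehat\Sigma)^{(n-2)/n}$. In the inductive step the same barrier interval works: Lemma~\ref{upper-F-H} controls $|A|_{\Sigma_t}$ and the barrier controls $\partial\Sigma_t$ for $t\in[0,\bar T]$, so the inductive hypothesis applied to each $\Sigma_t$ supplies a \emph{uniform} lower bound for $W_{2k-1}(\widehat\Sigma_t)-A_{k-1}(\widehat\Sigma_t)$ on $[0,\bar T]$, replacing your continuity-at-$t{=}0$ step.
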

\begin{proof}
	We firstly prove \eqref{In-A-F-S1} for $k=1$. By Lemma \ref{Lem-es-height}, there exists a vector $e\in\mathrm{int}(\widehat{\partial\Sigma})$, a constant $\delta$ depending on $\partial\Sigma$ and the length of the second fundamental form of $\Sigma$, such that $\langle x,e\rangle\geq\delta$ for any $x\in\Sigma$. Then there exists a radius $0<R<\infty$, depending on $\delta$, such that 
	\begin{equation}
		\{x\in\bar{\mathbb{B}}^{n+1}:\langle x,e\rangle\geq \delta\}\subset \widehat{C_{\frac{\pi}{2},R}}(e).
	\end{equation}
	By a direct calculation, we conclude that the maximal existence times $T_1^{*}$ and $T_2^{*}$ for the flow \eqref{Inverse-flow} with $F=\frac{1}{E_1}$ starting from $\Sigma$ and $C_{\frac{\pi}{2},R}(e)$ satisfy
	\begin{align*}
		T_1^{*}&=\frac{1}{n}\log\left(\frac{b_n}{|\Sigma|}\right),\\
		T_2^{*}&=\frac{1}{n}\log\left(\frac{b_n}{|C_{\frac{\pi}{2},R}(e)|}\right).
	\end{align*}
	where $T_2^*$ depends only on $n$ and $R$, and we have $T_1^*>T_2^*$. We take 
	\begin{equation}
		\bar{T}=\frac{1}{2}T_2^{*}=\frac{1}{2n}\log\left(\frac{b_n}{|C_{\frac{\pi}{2},R}(e)|}\right).
	\end{equation}
	Assume that $\{\Sigma_t\}$ and $\{\widetilde{\Sigma}_t\}$ are the flow hypersurfaces to the flow \eqref{Inverse-flow} starting from $\Sigma$ and $C_{\frac{\pi}{2},R}(e)$ respectively. Then by the avoidance principle, we have $\widehat{\Sigma}_t\subset\widehat{\widetilde{\Sigma}}_t$ for $0\leq t\leq \bar{T}$. If we take $\delta_1>0$ as
	\begin{equation}\label{Choice-delta1}
		\delta_1=\min\{\langle x,e\rangle:x\in\widetilde{\Sigma}_{\bar{T}}\},
	\end{equation}
	we see easily that $\delta_1$ depends on $n$, $\partial\Sigma$ and the length of the second fundamental form of $\Sigma$. Moreover, for all $0\leq t\leq \bar{T}$, there holds
	\begin{equation}
		\widehat{\partial\Sigma}\subset\widehat{\partial\Sigma}_t\subset\widehat{\partial\tilde{\Sigma}}_{\bar{T}},
	\end{equation}
	where $\widehat{\partial\tilde{\Sigma}}_{\bar{T}}$ depends on $n$ and $R$. Hence by Lemma \ref{es-con}, there exists an uniform constant $c_1>0$, depending only on $n$ and $\delta_1$ such that 
	\begin{equation}\label{lower-W11}
		W_1^{\mathbb{S}^n}(\widehat{\partial\Sigma}_t)\geq (n+1)W_1(\widehat{\Sigma}_t)+c_1,\quad 0\leq t \leq \bar{T}.
	\end{equation}
	Then repeating the proof in Lemma \ref{Lem-Strictly} and using \eqref{lower-W11}, \eqref{Mono-Q} now becomes
	\begin{equation}\label{M-Q}
		\frac{d}{dt}Q(t)\leq \left\{\begin{aligned}
			&-\frac{2(n-2)c_1}{(n+1)(n-1)}W_1(\widehat{\Sigma}_t)^{-\frac{n-2}{n}},\,\,0<t\leq \bar{T},\\
			&0,\,\, \bar{T}<t\leq T_1^{*}.
		\end{aligned}\right.
	\end{equation}
	Applying Lemma \ref{lem-area-L}, we have
	\begin{equation*}
		W_1(\widehat{\Sigma}_t)=\frac{1}{n+1}|\Sigma_t|\leq \frac{b_n}{n+1},
	\end{equation*}
	then  for $0\leq t \leq \bar{T}$, there holds
	\begin{equation}\label{In-Q}
		\frac{d}{dt}Q(t)\leq -\frac{2(n-2)c_1}{(n+1)(n-1)}\left(\frac{b_n}{n+1}\right)^{-\frac{n-2}{n}},
	\end{equation}
	which in turn implies 
	\begin{align}
		W_3(\widehat{\Sigma})\geq& \frac{\omega_{n-1}}{(n+1)(n-1)}\left[\frac{n(n+1)}{\omega_{n-1}}W_1(\widehat{\Sigma})\right]^{\frac{n-2}{n}}-\frac{n-2}{n-1}W_1(\widehat{\Sigma})\notag\\
		&+\frac{2(n-2)c_1}{(n+1)(n-1)}\left(\frac{b_n}{n+1}\right)^{-\frac{n-2}{n}}W_1(\widehat{\Sigma})^{\frac{n-2}{n}}\bar{T}\notag\\
		\geq&\frac{\omega_{n-1}}{(n+1)(n-1)}\left[\frac{n(n+1)}{\omega_{n-1}}W_1(\widehat{\Sigma})\right]^{\frac{n-2}{n}}-\frac{n-2}{n-1}W_1(\widehat{\Sigma})\notag\\
		&+\frac{(n-2)c_1}{n(n+1)(n-1)}\left(\frac{d_{\partial\Sigma}}{b_n}\right)^{\frac{n-2}{n}}\log\left(\frac{b_n}{|C_{\frac{\pi}{2},R}(e)|}\right)\notag\\
		:=&\frac{\omega_{n-1}}{(n+1)(n-1)}\left[\frac{n(n+1)}{\omega_{n-1}}W_1(\widehat{\Sigma})\right]^{\frac{n-2}{n}}-\frac{n-2}{n-1}W_1(\widehat{\Sigma})+a_1,
	\end{align}
	where  we used Lemma \ref{Lower-bound} in the second inequality, and  $a_1>0$ depends on $n,\partial\Sigma$ and the length of the second fundamental form of $\Sigma$. 
	
Next, we assume that there holds
	\begin{equation*}
		W_{2k-1}(\widehat{\Sigma})\geq A_{k-1}(\widehat{\Sigma})+a_{k-1}.
	\end{equation*}
	The auxiliary function $\phi(t)$ is defined as in \eqref{Defn-phi}. Using the above assumption, by a direct calculation, we obtain
	\begin{equation}\label{Mono-phi1}
		\frac{d}{dt}\phi(t)\leq -\frac{2k(n-2k)}{n-2k+1}\frac{a_{k-1}}{W_1(\widehat{\Sigma}_t)^{\frac{n-2k}{n}}}\leq -\frac{2k(n-2k)}{n-2k+1}\left(\frac{b_n}{n+1}\right)^{-\frac{n-2k}{n}}a_{k-1},
	\end{equation}
	which implies
	\begin{align}
		W_{2k+1}(\widehat{\Sigma})&\geq A_k(\widehat{\Sigma})+\frac{2k(n-2k)}{n-2k+1}\left(\frac{b_n}{n+1}\right)^{-\frac{n-2k}{n}}a_{k-1}W_1(\widehat{\Sigma})^{\frac{n-2k}{n}}\notag\\
		&\geq A_k(\widehat{\Sigma})+\frac{2k(n-2k)}{n-2k+1}\left(\frac{d_{\partial\Sigma}}{b_n}\right)^{\frac{n-2k}{n}}a_{k-1}\notag\\
		&:=A_k(\widehat{\Sigma})+a_k.
	\end{align}
	This completes the proof of Lemma \ref{In-pre}.
\end{proof}
Assume that $\Sigma$ is weakly convex but not an equator. By Lemma \ref{Lem-es-height}, let $e\in\mathrm{int}(\widehat{\partial\Sigma})$ be an direction such that $\widehat{\partial\Sigma}$ is contained in the open hemisphere $\mathcal{H}(e)$. Then there exists $0<R_1<\infty$, such that $\widehat{C_{\frac{\pi}{2},R_1}}(e)\subset\mathrm{int}(\widehat{\Sigma})$. Let $\{\Sigma_t\}_{t\geq 0}$ be the mean curvature flow starting from $\Sigma$. Then there exists a $\varepsilon_0>0$, such that for all $0<t<\varepsilon_0$, there holds
\begin{align}
	\widehat{C_{\frac{\pi}{2},R_1}}(e)&\subset\widehat{\Sigma}_t\subset\mathrm{int}(\widehat{\Sigma}),\label{Re-subset}\\
	\max_{\Sigma_t}|A|^2&\leq \max_{\Sigma_0}|A|^2+1.\label{Re-A2}
\end{align}
Then by Lemma \ref{In-pre}, there exists a uniform constant $a_k$ depending on $n,k,\partial\Sigma$ and the length of the second fundamental form of $\Sigma$, but independent of time $t\in(0,\epsilon_0)$, such that \eqref{In-A-F-S1} holds.  Letting $t\to 0$, this yields the strict inequality in \eqref{In-A-F}    and thus completes the proof of Theorem \ref{Thm-A F}.

\end{document}